\documentclass[11pt,reqno]{amsart}
\usepackage[T1]{fontenc}
\usepackage[utf8]{inputenc}
\usepackage{lmodern}
\usepackage{amsmath,amssymb,amsthm,mathtools,mathrsfs}
\usepackage{enumitem}
\usepackage{graphicx}
\usepackage{microtype}
\usepackage[colorlinks=true,linkcolor=blue,citecolor=blue,urlcolor=blue]{hyperref}

\allowdisplaybreaks
\numberwithin{equation}{section}

\newtheorem{theorem}{Theorem}[section]
\newtheorem{proposition}[theorem]{Proposition}
\newtheorem{lemma}[theorem]{Lemma}
\newtheorem{corollary}[theorem]{Corollary}
\theoremstyle{definition}

\newtheorem{remark}[theorem]{Remark}

\newcommand{\N}{\mathbb N}
\newcommand{\R}{\mathbb R}
\newcommand{\Pp}{\mathcal P}
\newcommand{\E}{\mathbb E}
\newcommand{\Prob}{\mathbb P}
\newcommand{\1}{\mathbf 1}
\newcommand{\Var}{\operatorname{Var}}
\newcommand{\Cov}{\operatorname{Cov}}
\newcommand{\lcm}{\operatorname{lcm}}
\newcommand{\floor}[1]{\left\lfloor #1\right\rfloor}
\newcommand{\norm}[1]{\left\lVert #1\right\rVert}
\newcommand{\abs}[1]{\left\lvert #1\right\rvert}
\newcommand{\ip}[2]{\left\langle #1,#2\right\rangle}
\newcommand{\dd}{\,\mathrm d}
\newcommand{\cH}{\mathcal H}
\newcommand{\cM}{\mathcal M}

\newcommand{\cA}{\mathcal A}
\newcommand{\doi}[1]{\href{https://doi.org/#1}{doi:\nolinkurl{#1}}}
\makeatletter
\@namedef{subjclassname@2020}{\textup{2020} Mathematics Subject Classification}
\makeatother

\title[Functional Limit Theorems for Random Least Common Multiples]
{Functional Limit Theorems for Random Least Common Multiples}

\author{Shaochen Wang}
\address{School of Mathematics, South China University of Technology, Guangzhou 430072, China}
\email{mascwang@scut.edu.cn}

\author{Guangyu Yang}
\address{School of Mathematics and Statistics, Zhengzhou University, Zhengzhou 450001, China}
\email{guangyu@zzu.edu.cn}

\author{Wang Zhou}
\address{Department of Statistics and Data Science, National University of Singapore, Singapore 119077, Singapore}
\email{stazw@nus.edu.sg}

\subjclass[2020]{Primary 60F10, 60F17; Secondary 11N37, 60G50}

\keywords{least common multiple, random set of integers, functional large deviations, moderate deviations, law of the iterated logarithm, geometric marks, prime number theorem, reproducing kernel Hilbert space}

\begin{document}

\begin{abstract}
Let $A_n$ be a subset of $\{1,2,\ldots,n\}$ obtained by retaining each integer independently with fixed probability $\theta\in(0,1)$, and let $L_n$ be the least common multiple of the integers in $A_n$.  We prove a functional large deviation principle, a functional moderate deviation principle, and a Strassen-type functional law of the iterated logarithm for the process $(\log L_{\floor{nt}})_{0\le t\le1}$.  The large deviation rate function is given by an entropy contraction for geometric marks, while the moderate deviation rate function and LIL cluster set are described by the reproducing kernel Hilbert space associated with the Gaussian limit process.
\end{abstract}

\maketitle
\tableofcontents

\section{Introduction}

Let $\theta\in(0,1)$ be fixed, and let $(\xi_j)_{j\ge1}$ be independent Bernoulli random variables with
\[
 \Prob(\xi_j=1)=\theta,
 \qquad
 \Prob(\xi_j=0)=1-\theta.
\]
For $n\in\N$, set
\[
 A_n:=\{j\in\{1,2,\ldots,n\}: \xi_j=1\},
 \qquad
 L_n:=\lcm(A_n),
\]
where $\lcm(\varnothing)=1$.  Thus $L_n$ is the least common multiple of the integers selected from $\{1,\ldots,n\}$.  We also set $A_0=\varnothing$.  This Bernoulli subset model was introduced by Cilleruelo, Ru\'e, \v{S}arka and Zumalac\'arregui \cite{CRSZ}, who proved a weak law of large numbers for $\log L_n$.  Alsmeyer, Kabluchko and Marynych \cite{AKM} later established the corresponding strong law of large numbers and a functional central limit theorem (CLT).  Related models for random least common multiples include independent uniform samples \cite{BMR}, uniformly sampled tuples whose size grows with the ambient range \cite{BIM}, and random $q$-integers \cite{SannaQ}.

The purpose of this paper is to complete the fixed $\theta$ picture beyond laws of large numbers and central limit behavior.  We prove a functional large deviation principle (LDP), a functional moderate deviation principle (MDP), and a Strassen-type functional law of the iterated logarithm (LIL) for the process
\[
t\mapsto\log L_{\lfloor nt\rfloor},\quad 0\le t\le1.
\]
We also derive endpoint consequences and linear functional consequences from these path results.  Large and moderate deviations have been studied extensively in several number-theoretic settings.  For additive arithmetic functions one may consult Mehrdad and Zhu \cite{MZ}.  For Engel, Sylvester, Cantor, and related expansions, see Zhu \cite{ZhuEngel}, Hu \cite{HuSPL}, Fang \cite{FangAltEngel,FangModified}, Fang, Wu and Shang \cite{FangWuShang}, and Wang and Yang \cite{WY}.  The LIL result below is of Strassen-type, in the classical compact cluster set sense of Strassen \cite{Strassen} and its functional developments such as de Acosta \cite{DeAcosta} and Kuelbs \cite{Kuelbs}.

The paper is organized as follows.  Section~\ref{sec:main} states the main theorems and gives the proof strategy.  Section~\ref{sec:numerics} presents numerical checks of the endpoint formulas.  Section~\ref{sec:rates} studies the main properties of rate functions.  Sections~\ref{sec:reduction} and~\ref{sec:prime} provide some arithmetic reduction and prime sum estimates, which will be needed later.  Sections~\ref{sec:LDP},~\ref{sec:MDP}, and~\ref{sec:LIL} prove the three functional results.  Appendix~\ref{sec:appendix-local} contains the local maximal estimate used for tightness, and Appendix~\ref{sec:appendix-euler} gives a supplementary Euler equation for bounded dual fields.

\section{Main results}\label{sec:main}


We adopt the standard terminology for large deviation principles from Dembo and Zeitouni \cite{DZ}, and for weak convergence on Skorokhod spaces from Billingsley \cite{Billingsley}. The main results are presented in three subsections, followed by a discussion of the corresponding proof strategies in the final subsection.

\subsection{Large deviation principle}

Denote
\begin{equation}\label{eq:q-pi}
 q:=1-\theta,
 \qquad
 \pi_k:=\theta q^{k-1},\qquad k\in\N,
\end{equation}
and let $G$ have the geometric distribution $\Prob(G=k)=\pi_k, k\in\N$.  We have
\begin{equation}\label{eq:ctheta}
 c_\theta:=\E G^{-1}
 =\sum_{k\ge1}\frac{\pi_k}{k}
 =\frac{\theta\log(1/\theta)}{1-\theta}.
\end{equation}
This is the limit constant appearing in the weak law of large numbers of \cite{CRSZ} and in the strong law of \cite{AKM}.

For $x\in(0,1]$, let $m(x):=\floor{1/x}$. Define $\cA$ to be the collection of measurable families
\[
 a(x)=(a_0(x),a_1(x),a_2(x),\ldots),\qquad x\in(0,1],
\]
such that, for almost every $x$,
\begin{equation}\label{eq:reduced-kernel-constraints}
 a_k(x)\ge0,
 \qquad
 a_k(x)=0\quad\text{for }k>m(x),
 \qquad
 \sum_{k=0}^{m(x)}a_k(x)=1.
\end{equation}
The coordinate $a_0(x)$ represents the total mass assigned to marks $k>m(x)$, which are invisible on the time interval $[0,1]$.  Set
\begin{align}
 \mathcal R_\theta(a)
 :=\int_0^1
 \bigg[
 &\sum_{k=1}^{m(x)}a_k(x)
 \log\frac{a_k(x)}{\pi_k}
 +a_0(x)
 \log\frac{a_0(x)}{q^{m(x)}}
 \bigg]\dd x,
 \label{eq:reduced-entropy}
\end{align}
with the convention $0\log0=0$.  Thus $\mathcal R_\theta(a)$ is the average, over rescaled prime locations $x$, of the relative entropy of the visible probability vector together with the invisible tail mass $a_0(x)$, relative to the geometric law $(\pi_k)_{k\ge1}$.  For relative entropy, see \cite[Section~6.2]{DZ}.  Changing the distribution of the geometric mark at location $x$ costs relative entropy, and the map $\mathcal T$ turns those modified marks into a path.  This path is
\begin{equation}\label{eq:reduced-map}
 (\mathcal T a)(t)
 :=\sum_{k\ge1}\int_0^{t/k}a_k(x)\dd x,
 \qquad 0\le t\le1.
\end{equation}
Define
\begin{equation}\label{eq:functional-rate-main}
 I_\theta(f)
 :=\inf\left\{
 \mathcal R_\theta(a):a\in\cA,\ \mathcal T a=f
 \right\},
 \qquad f\in C[0,1],
\end{equation}
where the infimum of the empty set is $+\infty$.

For $0\le t\le1$, set
\begin{equation}\label{eq:Xn}
 X_n(t):=\frac{\log L_{\floor{nt}}}{n},
\end{equation}
and let $\widehat X_n$ denote the polygonal interpolation of the values $X_n(j/n)$, $0\le j\le n$. The functional LDP can now be stated in terms of this entropy contraction.

\begin{theorem}\label{thm:FLDP}
The sequence $(\widehat X_n)_{n\ge1}$ satisfies an LDP in $C[0,1]$, equipped with the uniform norm, with speed
\[
 v_n:=\frac{n}{\log n}
\]
and good rate function $I_\theta$ defined by \eqref{eq:functional-rate-main}.  The step processes $(X_n)_{n\ge1}$ satisfy the same LDP in $D[0,1]$ equipped with the Skorokhod $J_1$-topology, with rate $I_\theta$ on $C[0,1]$ and $+\infty$ on $D[0,1]\setminus C[0,1]$.
\end{theorem}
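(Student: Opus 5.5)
The plan is to reduce $\log L_{\floor{nt}}$ to a sum over large primes of independent geometric marks, prove an LDP for the associated empirical measure, and push it forward to path space by a contraction.

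\textbf{Step 1: arithmetic reduction.} Write $\log L_N=\sum_{p\le N}\log p\cdot e_p(N)$, where $e_p(N)$ is the largest exponent of $p$ dividing some element of $A_N$. For $\varepsilon\in(0,1)$ split the primes into $p\le\varepsilon n$ and $\varepsilon n<p\le n$.
\begin{itemize}
\item Prime powers $p^k$ with $k\ge2$ contribute $O(\sqrt n\log n)$ deterministically, which is $o(n)$ and hence negligible after dividing by $n$.
\item The primes $p\le\varepsilon n$ contribute at most $\psi(\varepsilon n)/n\approx\varepsilon$ to $X_n$, uniformly in $t$ and deterministically. This is exponentially negligible once $\varepsilon\to0$.
\end{itemize}
For $\varepsilon n<p\le n$, define the mark $G_p:=\min\{k:\xi_{kp}=1\}$, capped at $m=\floor{n/p}$ (the value $0$ records ``none''). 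Then $p\mid L_{\floor{nt}}$ iff $G_p\ge1$ and $G_p p\le nt$.

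\textbf{Step 2: independence of the marks.} The marks are exactly independent for $n$ large. Indeed, if $kp=k'p'$ with $k,k'<1/\varepsilon<p,p'$, then $p=p'$. Each $G_p$ has law $(\pi_1,\dots,\pi_m,q^m)$, i.e.\ the law in \eqref{eq:reduced-entropy} at $x=p/n$.

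Using the prime number theorem (Section~\ref{sec:prime}) to replace $\log p$ by $\log n$ at relative cost $O(1/\log n)$, we get, uniformly in $t$ up to $O(\varepsilon)+o(1)$,
\[
X_n(t)\approx\frac{\pi(n)\log n}{n}\cdot\frac1{\pi(n)}\sum_{\varepsilon n<p\le n}\1\{G_p\ge1,\ G_p\,p/n\le t\}.
\]
The prefactor $\pi(n)\log n/n$ tends to $1$.

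\textbf{Step 3: LDP for the empirical measure.} Consider $\mu_n:=\pi(n)^{-1}\sum_{p>\varepsilon n}\delta_{(p/n,G_p)}$ at speed $\pi(n)\sim v_n$. Partition $[\varepsilon,1]$ into small cells on which $m(x)$ is constant. Within a cell the marks are i.i.d., and the number of primes in the cell is asymptotically $n|\text{cell}|/\log n$. Sanov's theorem in each cell, combined over the cells (they are independent), gives an LDP for the cell-averaged mark distributions. The rate is the cell-discretized version of $\mathcal R_\theta$ restricted to $x>\varepsilon$.

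Refining the cells and using a projective-limit argument (Dawson--G\"artner), one obtains an LDP for $\mu_n$ with rate $\mathcal R_\theta$. The $x$-marginal of any finite-rate limit is Lebesgue measure.

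\textbf{Step 4: contraction and removal of the truncation.} The map $\mu\mapsto\bigl(t\mapsto\mu\{(x,k):1\le k,\ kx\le t\}\bigr)$ is continuous into $C[0,1]$ at measures with Lebesgue $x$-marginal. This is because $\{kx=t\}$ is null, and uniformity in $t$ follows by monotonicity and a P\'olya-type argument. The contraction principle then gives the LDP for the truncated process with rate $\inf\{\mathcal R_\theta(a):\mathcal T a=f\}$, restricted to $x>\varepsilon$.

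Exponential approximation as $\varepsilon\to0$ (\cite[Thm.~4.2.16]{DZ}) then yields the rate $I_\theta$. Goodness of $I_\theta$ comes from Section~\ref{sec:rates}. The step process $X_n$ has jumps at most $\log n/n$, so it is exponentially equivalent to $\widehat X_n$ in the uniform norm, hence in $J_1$. Since $C[0,1]$ is closed in $D[0,1]$ and the $J_1$ topology coincides with the uniform one on it, the $D[0,1]$ statement follows with rate $+\infty$ off $C[0,1]$.

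\textbf{Main obstacle.} I expect the hardest part to be Steps 3--4: making the cell-wise Sanov argument uniform, and identifying the projective-limit rate with the infimum in \eqref{eq:functional-rate-main}. The latter requires showing that the discretized infima converge to $I_\theta$ (lower semicontinuity and approximation of kernels $a$ by cell-constant ones). If the continuity of the contraction map fails uniformly, I would instead prove finite-dimensional LDPs plus exponential tightness via the local maximal estimate of Appendix~\ref{sec:appendix-local}, and then identify the rate by the same discretization argument.
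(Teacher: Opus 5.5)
Your outline is sound, but it follows a genuinely different route from the paper's.

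\textbf{The paper's route.} It keeps every prime $p>\sqrt n$, absorbing the weights $r_{n,p}=\log p/\log n$ through Lemma~\ref{lem:prime-Riemann}. It proves finite-dimensional LDPs by G\"artner--Ellis and gets the exponential modulus \eqref{eq:LDP-exponential-equicontinuity} from a Chernoff bound that uses monotonicity of $W_n$. It then lifts to $C[0,1]$ with Arcones' process-level theorem. Only afterwards does it identify the projective Legendre rate with the entropy contraction, using the Gibbs formula, Fenchel--Moreau and a finite-intersection compactness argument (Propositions~\ref{prop:entropy-equivalence} and~\ref{prop:dual-rate}).

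\textbf{Your route.} You obtain the reduced entropy $\mathcal R_\theta$ directly as a Sanov rate; your capped mark $0$ is exactly the coordinate $a_0$. The sup-norm continuity of $\mu\mapsto\mu(B_t)$ at finite-rate points, via P\'olya, then replaces both the equicontinuity estimate and the duality step. The price is the truncation at $\varepsilon n$, the Dawson--G\"artner bookkeeping over interval partitions, and an extra $\varepsilon\to0$ limit.

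\textbf{Two points to make explicit.}
\begin{enumerate}
\item The images of $\mu_n$ are step functions, not elements of $C[0,1]$. So contract into $\ell^\infty[0,1]$, or into $D[0,1]$ with the sup norm. Use the form of the contraction principle that needs continuity only at finite-rate points. This is legitimate because, on the effective domain, closures of preimages of closed sets lie in the preimage, and preimages of open sets lie in their interiors.
\item The $\varepsilon\to0$ step (Dembo--Zeitouni, Theorem~4.2.16) needs an identification argument, but it is short.
\begin{itemize}
\item Restricting a kernel to $(\varepsilon,1]$ does not increase entropy and moves the path by at most $\varepsilon$ in sup norm.
\item Extending a kernel on $(\varepsilon,1]$ by the reference law $(q^{m(x)},\pi_1,\dots,\pi_{m(x)})$ on $(0,\varepsilon]$ costs zero entropy and also moves the path by at most $\varepsilon$.
\item Combined with compactness of $\{I_\theta\le M\}$, this gives $\sup_{\delta}\liminf_{\varepsilon\to0}\inf_{B(f,\delta)}I_\varepsilon=I_\theta(f)$.
\item The same argument gives the closed-set condition (b) of that theorem, which upgrades the weak LDP to a full one.
\end{itemize}
\end{enumerate}
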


For $\lambda\in\R$, define
\begin{equation}\label{eq:endpoint-cumulant}
 \Lambda_\theta(\lambda)
 :=\sum_{j=1}^{\infty}\frac1{j(j+1)}
 \log\left(q^j+(1-q^j)e^\lambda\right).
\end{equation}

The endpoint contraction has the following one-dimensional Legendre form.

\begin{corollary}\label{cor:endpoint-LDP}
The sequence $(n^{-1}\log L_n)_{n\ge1}$ satisfies an LDP on $\R$ with speed $n/\log n$ and rate
\begin{equation}\label{eq:endpoint-rate-Legendre}
 I_\theta^{\mathrm{end}}(y)
 =\sup_{\lambda\in\R}
 \{\lambda y-\Lambda_\theta(\lambda)\}.
\end{equation}
For every $y\in(0,1)$ there is a unique $\lambda_y\in\R$ satisfying
\begin{equation}\label{eq:lambda-y-equation}
 y=\sum_{j=1}^{\infty}\frac1{j(j+1)}
 \frac{(1-q^j)e^{\lambda_y}}
 {q^j+(1-q^j)e^{\lambda_y}},
\end{equation}
and
\begin{equation}\label{eq:endpoint-rate-explicit}
 I_\theta^{\mathrm{end}}(y)
 =y\lambda_y-\Lambda_\theta(\lambda_y),
 \qquad 0<y<1.
\end{equation}
More explicitly, define
\begin{equation}\label{eq:endpoint-optimal-profile}
 u_y(x):=
 \frac{(1-q^{m(x)})e^{\lambda_y}}
 {q^{m(x)}+(1-q^{m(x)})e^{\lambda_y}},
 \qquad 0<x\le1.
\end{equation}
Then $\int_0^1u_y(x)\dd x=y$ and
\begin{align}
 I_\theta^{\mathrm{end}}(y)
 =\int_0^1\bigg[
 u_y(x)\log\frac{u_y(x)}{1-q^{m(x)}}
 +(1-u_y(x))
 \log\frac{1-u_y(x)}{q^{m(x)}}
 \bigg]\dd x.
 \label{eq:endpoint-binary-entropy}
\end{align}
The boundary values are
\begin{equation}\label{eq:endpoint-boundaries}
 I_\theta^{\mathrm{end}}(0)=+\infty,
 \qquad
 I_\theta^{\mathrm{end}}(1)
 =-\sum_{j=1}^{\infty}\frac1{j(j+1)}\log(1-q^j),
\end{equation}
and $I_\theta^{\mathrm{end}}(y)=+\infty$ for $y\notin[0,1]$.  Its unique zero is $c_\theta$.
\end{corollary}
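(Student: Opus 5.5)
The plan is to deduce the corollary from Theorem~\ref{thm:FLDP} by the contraction principle, and then to compute the contracted rate explicitly by solving a pointwise convex minimization in $x$. The evaluation map $f\mapsto f(1)$ is continuous on $C[0,1]$, and $\widehat X_n(1)=n^{-1}\log L_n$. Hence $(n^{-1}\log L_n)$ satisfies an LDP with speed $n/\log n$ and good rate
\[
I^{\mathrm{end}}_\theta(y)=\inf\{\mathcal R_\theta(a):a\in\cA,\ (\mathcal Ta)(1)=y\}.
\]
Next I would write $(\mathcal Ta)(1)=\sum_{k\ge1}\int_0^{1/k}a_k(x)\dd x=\int_0^1\sum_{k=1}^{m(x)}a_k(x)\dd x=\int_0^1(1-a_0(x))\dd x$, using that $k\le m(x)$ if and only if $x\le 1/k$. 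So the constraint depends only on $u(x):=1-a_0(x)$.

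For fixed $x$ and fixed $u(x)$, I would minimize the integrand over the visible coordinates $(a_1,\dots,a_{m(x)})$ with total mass $u(x)$. By the chain rule for relative entropy (log-sum inequality), the minimum is the binary entropy
\[
u\log\frac{u}{1-q^{m}}+(1-u)\log\frac{1-u}{q^m},
\]
attained at $a_k=u\,\pi_k/(1-q^m)$, since $\sum_{k\le m}\pi_k=1-q^m$. Thus $I^{\mathrm{end}}_\theta(y)$ equals the infimum of $\int_0^1 h_{m(x)}(u(x))\dd x$ over measurable $u:(0,1]\to[0,1]$ with $\int u=y$, where $h_m$ denotes this binary entropy. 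The function $m(x)=j$ on $(1/(j+1),1/j]$, an interval of length $1/(j(j+1))$.

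This is a convex problem with a linear constraint, and I would solve it by Lagrange duality. For each $\lambda$, $\inf_{u\in[0,1]}\{h_m(u)-\lambda u\}=-\log(q^m+(1-q^m)e^\lambda)$ (Gibbs variational formula), with minimizer $u=(1-q^m)e^\lambda/(q^m+(1-q^m)e^\lambda)$. Integrating in $x$ gives $\inf_u\{\int h-\lambda\int u\}=-\Lambda_\theta(\lambda)$, which yields the weak-duality bound $I^{\mathrm{end}}_\theta(y)\ge\sup_\lambda\{\lambda y-\Lambda_\theta(\lambda)\}$.

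For $y\in(0,1)$, the map $\lambda\mapsto\Lambda_\theta'(\lambda)$ (the right side of \eqref{eq:lambda-y-equation}) is continuous and strictly increasing, since $\Lambda_\theta''>0$ termwise. Differentiation under the sum is justified by the summable weights $1/(j(j+1))$. Its limits are $0$ as $\lambda\to-\infty$ and $\sum 1/(j(j+1))=1$ as $\lambda\to+\infty$, by dominated convergence. This gives a unique $\lambda_y$. The profile $u_y$ is feasible, $\int u_y=y$, and attains equality in the pointwise bound, so the infimum is achieved and equals $y\lambda_y-\Lambda_\theta(\lambda_y)$. This proves \eqref{eq:endpoint-rate-Legendre}, \eqref{eq:endpoint-rate-explicit} and \eqref{eq:endpoint-binary-entropy} on $(0,1)$.

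The boundary and exterior cases would be handled as follows.
\begin{itemize}[leftmargin=*]
\item If $y\notin[0,1]$, no feasible $a$ exists, and $\Lambda_\theta'\in(0,1)$ makes the Legendre transform infinite.
\item At $y=1$ we need $u\equiv1$, giving $\int-\log(1-q^{m(x)})\dd x$, which is the stated series. The series converges because $-\log(1-q^j)\to0$. On the dual side, $\lambda-\Lambda_\theta(\lambda)=-\sum\frac{1}{j(j+1)}\log(q^je^{-\lambda}+1-q^j)$ increases to the same value.
\item At $y=0$ we need $u=0$ a.e., giving $\int -m(x)\log q\,\dd x=\sum_j\frac{j\log(1/q)}{j(j+1)}=\infty$. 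Dually, $-\Lambda_\theta(\lambda)\to\infty$ as $\lambda\to-\infty$ by monotone convergence, since each term tends to $-j\log q$.
\end{itemize}

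The unique zero is at $\lambda_y=0$, where $y=\sum(1-q^j)/(j(j+1))$. I would check this equals $c_\theta$ via $\sum_j\frac{1-q^j}{j(j+1)}=\sum_k\pi_k\sum_{j\ge k}\frac1{j(j+1)}=\sum_k\pi_k/k$. Uniqueness of the zero follows from strict convexity of the Legendre transform on $(0,1)$ together with positivity elsewhere.

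The main obstacle is justifying strong duality and the interchange of infimum and integral rigorously, since the profile is infinite-dimensional and the endpoint values are subtle. The pointwise-minimizer argument above avoids abstract duality. It only needs measurability of $u_y$ (a step function) and the elementary inequality $h_m(u)\ge\lambda u-\log(q^m+(1-q^m)e^\lambda)$ for all $u$, which gives both bounds directly.
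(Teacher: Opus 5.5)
Your proposal is correct, but it identifies the rate by a different route from the paper. The paper never solves the contracted variational problem. It gets the Legendre form directly from the scalar G\"artner--Ellis computation, namely \eqref{eq:LDP-fdd-limit} with $m=1$, $t_1=1$, which reduces to $\Lambda_\theta$ because $\Prob(xG\le1)=1-q^j$ on $(1/(j+1),1/j]$. It then notes that the contraction principle gives the rate $\inf\{I_\theta(f):f(1)=y\}$ for the same sequence, and identifies the two by uniqueness of good rate functions. Everything else comes from Proposition~\ref{prop:endpoint-rate}. There, \eqref{eq:endpoint-binary-entropy} is checked only as an algebraic identity, $h_{m}(u_y)=\lambda_yu_y-\log(q^m+(1-q^m)e^{\lambda_y})$, followed by integration.

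You instead solve the contracted entropy problem by hand, in four steps:
\begin{enumerate}
\item the observation $(\mathcal Ta)(1)=\int_0^1(1-a_0(x))\dd x$;
\item the log-sum reduction of the visible coordinates to the binary entropy $h_{m(x)}$;
\item the pointwise Gibbs inequality, which gives weak duality;
\item attainment by the feasible step profile $u_y$.
\end{enumerate}

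What your route buys is a variational meaning for \eqref{eq:endpoint-optimal-profile}--\eqref{eq:endpoint-binary-entropy}. The profile $u_y$ is the optimal visible mass, and $a_k(x)=u_y(x)\pi_k/(1-q^{m(x)})$ is an optimal reduced kernel; by strict convexity of $h_m$ and equality in the log-sum inequality it is essentially unique. So $\mathcal Ta$ is the most likely path behind an endpoint deviation, which the paper only discusses qualitatively in Remark~\ref{rem:appendix-linear-paths}.

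The cost is that you depend on the full functional theorem and on the entropy form of $I_\theta$ from Propositions~\ref{prop:entropy-equivalence} and~\ref{prop:dual-rate}. The paper's Legendre form needs only the one-dimensional G\"artner--Ellis theorem together with Lemma~\ref{lem:deterministic-approximation}. Your boundary computations are sound, done on both the primal and dual sides by monotone convergence.

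For uniqueness of the zero, "strict convexity on $(0,1)$" is asserted rather than shown. A one-line replacement: if $y\in(0,1)$ and $y\ne c_\theta$, then $\lambda_y\ne0$, so strict convexity of $\Lambda_\theta$ gives $0=\Lambda_\theta(0)>\Lambda_\theta(\lambda_y)-y\lambda_y$, that is, $I^{\mathrm{end}}_\theta(y)>0$.
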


\subsection{Moderate deviation principle}

We next state the moderate deviation principle.  Let $(b_n)_{n\geq 1}$ satisfy
\begin{equation}\label{eq:MDP-scale}
 b_n\longrightarrow\infty,
 \qquad
 b_n\sqrt{\frac{\log n}{n}}\longrightarrow0.
\end{equation}
Define
\begin{equation}\label{eq:Yn}
 Y_n(t):=
 \frac{\log L_{\floor{nt}}-\E\log L_{\floor{nt}}}
 {b_n\sqrt{n\log n}},
 \qquad 0\le t\le1,
\end{equation}
and let $\widehat Y_n$ be its polygonal interpolation.

The covariance kernel of the Gaussian process in the functional central limit theorem of \cite{AKM} is
\begin{align}
 C_\theta(s,t)
 &:=\int_0^1
 \Cov\bigl(\1_{\{xG\le s\}},\1_{\{xG\le t\}}\bigr)\dd x
 \label{eq:cov-integral}\\
 &=\sum_{k\ge1}\pi_k
 \left(\frac{s}{k}\wedge\frac{t}{k}\right)
 -\sum_{k,l\ge1}\pi_k\pi_l
 \left(\frac{s}{k}\wedge\frac{t}{l}\right).
 \label{eq:cov-series}
\end{align}
Let $\cH_\theta$ denote the reproducing kernel Hilbert space associated
with the covariance kernel $C_\theta$, in the sense of Aronszajn
\cite{Aronszajn}.  It is known that for $g\in C[0,1]$,
\[
\frac12\norm{g}_{\cH_\theta}^2
=
\sup_{m\ge1,\ t_1,\ldots,t_m\in[0,1],\ a_1,\ldots,a_m\in\mathbb R}
\bigg\{
\sum_{i=1}^m a_i g(t_i)
-\frac12\sum_{i,j=1}^m a_i a_j C_\theta(t_i,t_j)
\bigg\};
\]
by convention, the supremum is $+\infty$ if $g\notin\cH_\theta$.
With this notation, the moderate deviation principle has the following Gaussian quadratic form.

\begin{theorem}\label{thm:FMDP}
Under \eqref{eq:MDP-scale}, the sequence $(\widehat Y_n)_{n\ge1}$ satisfies an MDP in $C[0,1]$, equipped with the uniform norm, with speed $b_n^2$ and good rate function
\begin{equation}\label{eq:MDP-rate}
 J_\theta(f)
 :=\begin{cases}
 \dfrac12\norm{f}_{\cH_\theta}^2,
 &f\in\cH_\theta,\\[1ex]
 +\infty,&f\notin\cH_\theta.
 \end{cases}
\end{equation}
The step processes $(Y_n)_{n\ge1}$ satisfy the same MDP in $D[0,1]$ with the $J_1$-topology.
\end{theorem}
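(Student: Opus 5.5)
We prove the MDP by first reducing $\log L_m$ to a sum of independent prime contributions, then applying the Gärtner--Ellis machinery for finite-dimensional distributions together with exponential tightness. For each prime $p\le n$, let $G_p$ be the largest exponent $k$ such that $p^k$ (more precisely, some multiple of $p^k$) is selected. The standard reduction of Section~\ref{sec:reduction} gives
\[
\log L_m=\sum_{p\le m}\log p\cdot\1_{\{p\,G_p\le m\}}+R_m ,
\]
where the marks $G_p$ are independent and essentially geometric with law $(\pi_k)$. Here $G_p$ is the index of the first selected multiple of $p$, counted from the top in blocks. The error $R_m$ collects higher prime powers and the dependence between different primes; both are controlled by $O(\sqrt n)$-type bounds.

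\textbf{Step 1: negligibility of the error.} I would show that $\sup_{t}|R_{\floor{nt}}-\E R_{\floor{nt}}|/(b_n\sqrt{n\log n})$ is exponentially negligible at speed $b_n^2$. Since the error is deterministically $O(\sqrt n\log n)$ up to bounded-overlap terms, it is $o(b_n\sqrt{n\log n})$ uniformly. This uses \eqref{eq:MDP-scale} only mildly, and the residual random part is handled by Bernstein's inequality.

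\textbf{Step 2: finite-dimensional MDP.} Fix $0\le t_1<\dots<t_m\le1$ and $a\in\R^m$. Set $\lambda_n:=b_n/\sqrt{n\log n}$ and compute
\[
\frac1{b_n^2}\log\E\exp\Big(b_n\sum_i a_iY_n(t_i)\Big)
=\frac1{b_n^2}\sum_{p\le n}\log\E\exp\Big(\lambda_n\log p\sum_i a_i(\1_{\{pG\le nt_i\}}-\Prob(pG\le nt_i))\Big).
\]
Each summand has argument at most $\lambda_n\log n\cdot\sum|a_i|\to0$, because $b_n\sqrt{\log n/n}\to0$. A second-order Taylor expansion with a uniform cubic remainder therefore gives
\[
\frac{\lambda_n^2}{2b_n^2}\sum_{p\le n}(\log p)^2\,\Var\Big(\sum_i a_i\1_{\{xG\le t_i\}}\Big)\Big|_{x=p/n}+o(1).
\]
The prime-sum estimate of Section~\ref{sec:prime}, $\sum_{p\le n}(\log p)^2\phi(p/n)\sim n\log n\int_0^1\phi$ for piecewise continuous $\phi$ (PNT with partial summation), turns this into $\frac12\sum a_ia_jC_\theta(t_i,t_j)$ by \eqref{eq:cov-integral}. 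Gärtner--Ellis then yields the finite-dimensional MDP with the Gaussian quadratic rate.

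\textbf{Step 3: exponential tightness and Dawson--Gärtner.} This is the main obstacle. We need, for each $\varepsilon>0$,
\[
\lim_{\delta\to0}\limsup_n b_n^{-2}\log\Prob\Big(\sup_{|s-t|\le\delta}|\widehat Y_n(s)-\widehat Y_n(t)|>\varepsilon\Big)=-\infty.
\]
On a block $[t,t+\delta]$, the increment is a sum of independent centred bounded terms with variance $\lesssim\delta n\log n$ and individual bound $\log n$. Bernstein's inequality gives a tail $\exp(-c\varepsilon^2b_n^2/\delta)$, using $\lambda_n\log n\to0$ again. The supremum inside a block would come from the local maximal estimate of Appendix~\ref{sec:appendix-local}, applied via a chaining or Ottaviani/Lévy-type maximal inequality for independent summands. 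The difficulty is that the summand indicators $\1_{\{pG_p\le nt\}}$ are monotone in $t$ and not martingale increments in $t$; I would exploit this monotonicity to bracket the supremum by grid values on a mesh of size $\delta/K$, with the centring drift controlled by the prime number theorem.

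Once exponential tightness in $C[0,1]$ holds, the projective-limit (Dawson--Gärtner) theorem combined with the finite-dimensional rates gives the LDP for $\widehat Y_n$. The rate is the supremum formula, which equals $J_\theta$ by the stated RKHS identity, and it is good. Finally, $\norm{Y_n-\widehat Y_n}_\infty\le\max_j|\log L_{j+1}-\log L_j|/(b_n\sqrt{n\log n})\le \log n/(b_n\sqrt{n\log n})\to0$ deterministically. Hence the step processes are exponentially equivalent to $\widehat Y_n$ in $D[0,1]$ with $J_1$, and they share the MDP because $J_\theta$ is finite only on continuous paths.
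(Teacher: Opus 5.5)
Your outline matches the paper's: independent large-prime marks, a second-order cumulant expansion with G\"artner--Ellis, exponential equicontinuity with a projective-limit lift, and exponential equivalence to reach $Y_n$ and $D[0,1]$. The gap is in Step~3, which you rightly call the crux. You point to the local maximal estimate of Appendix~\ref{sec:appendix-local}, but the mechanism you actually describe does not close.

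Bracketing by monotonicity on a grid of fixed mesh $h=\delta/K$ uses $0\le S_n(u)-S_n(r_i)\le S_n(r_{i+1})-S_n(r_i)$. This leaves an error equal to the mean increment $\E[S_n(r_{i+1})-S_n(r_i)]\approx c_\theta hn$. On the scale of $Y_n$ this is about $c_\theta h\sqrt{n/\log n}/b_n$, which tends to $+\infty$ by \eqref{eq:MDP-scale}. So the centring drift is not controlled; it dominates.

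Bracketing only becomes legitimate at a mesh $h_n=o(b_n\sqrt{\log n/n})$, which means at least $b_n^{-1}\sqrt{n/\log n}$ grid points. A union bound of the Bernstein tails $\exp\{-c\varepsilon^2b_n^2/\delta\}$ for the values $U_n(r_i)-U_n(s)$ at those points then costs roughly $\tfrac12\log n$ in the exponent when $b_n$ grows slowly. That is not $o(b_n^2)$ if $b_n=O(\sqrt{\log n})$, a regime allowed by \eqref{eq:MDP-scale} and exactly the one used later for the LIL.

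Ottaviani/L\'evy inequalities do not help directly either. Increments of $U_n$ over disjoint $t$-intervals are built from the same marks $T_{n,p}$ and are not independent. A full multi-scale chaining down to the mesh $h_n$, with bracketing only at the last level, would be the alternative, but you do not carry it out.

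What you need is a maximal inequality inside a cell that avoids fine grids altogether. Proposition~\ref{prop:local-threshold}, applied directly to each cell of a fixed mesh-$\delta$ grid, provides this:
\begin{itemize}
\item After symmetrization and conditioning on the marks, $u\mapsto\sum_p\varepsilon_pc_{n,p}\1_{\{r_{j-1}<T_{n,p}\le u\}}$ is a partial-sum process of independent signs taken in the order of the $T_{n,p}$. This is how the ``not a martingale in $t$'' obstacle disappears.
\item Doob's inequality then gives $\E R_{n,j}\le4\sqrt{V_{n,j}}=O(\sqrt\delta)=o(b_n)$.
\item Talagrand's concentration inequality around the mean yields the tail $\exp\{-c\eta^2b_n^2/\delta\}$; see \eqref{eq:Rnj-tail}--\eqref{eq:MDP-modulus-bound}.
\end{itemize}

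There are also some smaller errors outside Step~3.
\begin{itemize}
\item In Step~1, an error of order $\sqrt n\log n$ is \emph{not} $o(b_n\sqrt{n\log n})$ when $b_n=O(\sqrt{\log n})$. The correct bound is $O(\sqrt n)$: primes $p\le\sqrt n$ and higher prime powers contribute deterministically at most $\vartheta(\sqrt n)+\sum_{r\ge2}\vartheta(n^{1/r})=O(\sqrt n)$ (Lemma~\ref{lem:deterministic-approximation}). So there is no random residual and no Bernstein step is needed.
\item Your description of the marks is off. They are $G_{n,p}=\min\{k:\xi_{kp}=1\}$, the first selected multiple, extended by auxiliary Bernoulli variables beyond $n/p$, not a ``largest exponent.'' They are independent only for $p>\sqrt n$, so the sum in Step~2 must run over $\sqrt n<p\le n$.
\item The exponent in Step~2 should be $b_n^2\sum_ia_iY_n(t_i)$. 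With $b_n$ in its place, the scaled cumulant would tend to $0$.
\end{itemize}
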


Set
\begin{equation}\label{eq:sigma-theta}
 \sigma_\theta^2:=C_\theta(1,1)
 =\sum_{j=1}^{\infty}
 \frac{q^j(1-q^j)}{j(j+1)}.
\end{equation}
Indeed, for $x\in(1/(j+1),1/j]$ one has
$\Prob(xG\le1)=1-q^j$, and integrating the Bernoulli variance over these intervals gives the series in \eqref{eq:sigma-theta}. Evaluating the functional MDP at time $1$ gives the endpoint form.

\begin{corollary}\label{cor:endpoint-MDP}
Under the assumptions in  \eqref{eq:MDP-scale},
\[
 \frac{\log L_n-\E\log L_n}
 {b_n\sqrt{n\log n}}
\]
satisfies an MDP on $\R$ with speed $b_n^2$ and rate
\begin{equation}\label{eq:endpoint-MDP-rate}
 J_\theta^{\mathrm{end}}(x)
 =\frac{x^2}{2\sigma_\theta^2},\quad x\in\R.
\end{equation}
\end{corollary}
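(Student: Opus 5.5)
Corollary~\ref{cor:endpoint-MDP} follows from Theorem~\ref{thm:FMDP} by the contraction principle.

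\textbf{Contraction step.} The evaluation map $\pi_1:C[0,1]\to\R$, $f\mapsto f(1)$, is continuous for the uniform norm. Moreover $\widehat Y_n(1)=Y_n(1)$, since the polygonal interpolation agrees with $Y_n$ at the grid point $j=n$, and
\[
Y_n(1)=\frac{\log L_n-\E\log L_n}{b_n\sqrt{n\log n}}.
\]
The contraction principle \cite[Theorem~4.2.1]{DZ}, applied to the good rate function $J_\theta$, therefore gives an MDP with speed $b_n^2$ and good rate function
\[
J^{\mathrm{end}}_\theta(x)=\inf\{J_\theta(f): f\in C[0,1],\ f(1)=x\}.
\]

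\textbf{Identifying the infimum.} It remains to show that this infimum equals $x^2/(2\sigma_\theta^2)$, which is a standard RKHS computation. Here $\sigma_\theta^2=C_\theta(1,1)$ by \eqref{eq:sigma-theta}, and $\sigma_\theta^2>0$ because every term of that series is positive.

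\emph{Lower bound.} Take any $f\in\cH_\theta$ with $f(1)=x$. In the variational formula for $\frac12\norm{f}_{\cH_\theta}^2$, use the single point $t_1=1$ with coefficient $a_1=a$. This gives
\[
J_\theta(f)\ge\sup_{a\in\R}\bigl\{ax-\tfrac12a^2\sigma_\theta^2\bigr\}=\frac{x^2}{2\sigma_\theta^2}.
\]
If $f\notin\cH_\theta$, then $J_\theta(f)=+\infty$ and the bound holds trivially.

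\emph{Upper bound.} Consider $f_x:=\frac{x}{\sigma_\theta^2}\,C_\theta(\cdot,1)$. This function lies in $\cH_\theta$ and satisfies $f_x(1)=x$. By the reproducing property,
\[
\norm{f_x}_{\cH_\theta}^2=\frac{x^2}{\sigma_\theta^4}\,C_\theta(1,1)=\frac{x^2}{\sigma_\theta^2}.
\]
To make sure $f_x$ is an admissible competitor, one checks that $C_\theta(\cdot,1)$ is continuous. This follows from \eqref{eq:cov-series}: each term is continuous in $s$, and the series converges uniformly because it is dominated by $\sum_k\pi_k/k$.

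Combining the two bounds gives \eqref{eq:endpoint-MDP-rate}.

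The only step needing any care is identifying the norm of the kernel section with the supremum formula stated in the paper. The paper asserts that formula as known, and the kernel-section identity is the standard Aronszajn fact, so no real obstacle is expected.
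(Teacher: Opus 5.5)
Your proposal is correct and follows the paper's route: contraction under the continuous map $f\mapsto f(1)$, with the kernel section $xC_\theta(1,\cdot)/\sigma_\theta^2$ attaining the infimum. The only cosmetic difference is the lower bound. You take the one-point case of the variational formula for $\frac12\norm{f}_{\cH_\theta}^2$, while the paper uses the reproducing property and Cauchy--Schwarz, $f(1)^2=\ip{f}{C_\theta(1,\cdot)}_{\cH_\theta}^2\le\norm{f}_{\cH_\theta}^2\sigma_\theta^2$; the two arguments are equivalent.
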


\subsection{The law of the iterated logarithm}

For $n\ge3$, define the iterated logarithm normalization
\begin{equation}\label{eq:LIL-normalization}
 a_n:=\sqrt{2n\log n\log\log n}
\end{equation}
and the centered process
\begin{equation}\label{eq:LIL-process}
 \Xi_n(t):=
 \frac{\log L_{\floor{nt}}-\E\log L_{\floor{nt}}}{a_n},
 \qquad 0\le t\le1.
\end{equation}
Let $\widehat\Xi_n$ be the polygonal interpolation of $\Xi_n$, and put
\begin{equation}\label{eq:LIL-cluster-set}
 \mathcal K_\theta:=\left\{f\in\cH_\theta:
 \norm{f}_{\cH_\theta}\le1\right\}.
\end{equation}

The compact law of the iterated logarithm is formulated as a cluster set statement.

\begin{theorem}\label{thm:FLIL}
Write
\[
 \|f\|_\infty:=\sup_{0\le t\le1}|f(t)|,
 \qquad
 \operatorname{dist}_\infty(f,B):=
 \inf_{g\in B}\|f-g\|_\infty.
\]
There exists an event $\Omega_0$ with $\Prob(\Omega_0)=1$ such that, for every $\omega\in\Omega_0$, the sequence
$(\widehat\Xi_n(\omega))_{n\ge3}$ is relatively compact in $C[0,1]$ and its cluster set is exactly $\mathcal K_\theta$.  Equivalently, on $\Omega_0$,
\[
 \operatorname{dist}_{\infty}
 (\widehat\Xi_n,\mathcal K_\theta)\longrightarrow0,
\]
and, for every $f\in\mathcal K_\theta$, there is a subsequence
$n_j=n_j(f,\omega)\to\infty$ such that
\[
 \norm{\widehat\Xi_{n_j}-f}_\infty\longrightarrow0.
\]
The step processes $(\Xi_n)_{n\geq 3}$ have the same cluster set in $D[0,1]$ endowed with the $J_1$-topology.
\end{theorem}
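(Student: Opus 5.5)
The strategy is the classical Strassen route: an arithmetic reduction to a sum of independent prime-indexed variables, followed by a blocking argument along a geometric subsequence. The reduction is the one behind Theorem~\ref{thm:FMDP}. For a prime $p\le n$ with $p>\sqrt n$ (the contribution of smaller primes and of higher prime powers is $O(\sqrt n\log n)$ deterministically, hence negligible against $a_n$), $p$ divides $L_m$ iff some multiple $jp\le m$ is selected. So $\log L_m=\sum_{p}\log p\,\1_{\{pG_p\le m\}}+R_m$, where $G_p$ is the index of the first selected multiple of $p$. For distinct large primes the $G_p$ are essentially independent geometric marks, and the errors from shared multiples are handled as in Sections~\ref{sec:reduction} and~\ref{sec:prime}. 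The centered main term is a sum of independent bounded processes $\eta_p(t)=\log p\,(\1_{\{pG_p\le nt\}}-\Prob(pG_p\le nt))$. Its covariance, normalized by $n\log n$, converges to $C_\theta(s,t)$ by the prime number theorem, via the substitution $x=p/n$ and $\log p\approx\log n$.

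\textbf{Upper bound.} Take $n_k=\lfloor\rho^k\rfloor$ with $\rho>1$. Theorem~\ref{thm:FMDP} with $b_n^2=2\log\log n$ is admissible, since $b_n\sqrt{\log n/n}\to0$. Applied to the processes $\widehat\Xi_{n_k}$, it gives for every $\varepsilon>0$
\[
\Prob\bigl(\operatorname{dist}_\infty(\widehat\Xi_{n_k},\mathcal K_\theta)\ge\varepsilon\bigr)\le \exp\bigl(-(1+\delta)\log\log n_k\bigr)
\]
for some $\delta=\delta(\varepsilon)>0$, because $J_\theta>1/2$ off the $\varepsilon$-neighbourhood of $\mathcal K_\theta$ by goodness of the rate. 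This bound is summable in $k$, so Borel--Cantelli controls the subsequence.

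\textbf{Interpolation between $n_k$ and $n_{k+1}$.} For $n_k\le n<n_{k+1}$ we have $\widehat\Xi_n(t)=(a_{n_{k+1}}/a_n)\,\widehat\Xi_{n_{k+1}}(tn/n_{k+1})$ up to discretization. Since $\mathcal K_\theta$ is bounded in the uniform norm and $a_{n_{k+1}}/a_n\le\rho^{1/2}(1+o(1))$, it suffices to control
\[
\sup_{n_k\le n\le n_{k+1}}\ \sup_{t}\ \bigl|\widehat\Xi_{n_{k+1}}(tn/n_{k+1})-\widehat\Xi_{n_{k+1}}(t)\bigr|.
\]
This is a modulus of continuity over time shifts of size at most $1-\rho^{-1}$. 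The local maximal estimate of Appendix~\ref{sec:appendix-local}, combined with the MDP exponential tightness, gives a bound whose probability is summable for a small enough increment. Uniform continuity of elements of $\mathcal K_\theta$, as $\rho\downarrow1$, then yields $\operatorname{dist}_\infty(\widehat\Xi_n,\mathcal K_\theta)\to0$ a.s. Relative compactness follows from this together with compactness of $\mathcal K_\theta$.

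\textbf{Lower bound (the main obstacle).} Fix $f\in\mathcal K_\theta$ with $\|f\|_{\cH_\theta}<1$; the boundary case follows by density and closedness. Use the sparse subsequence $n_k=k^k$, so that $n_{k-1}/n_k\to0$. Decompose $\log L_{\lfloor n_kt\rfloor}$ into the part from primes $p\le \epsilon_k n_k$ with $\epsilon_k\to0$ slowly (say $n_{k-1}/n_k\ll\epsilon_k$) and the part from primes in $(\epsilon_k n_k,n_k]$. The marks $G_p$ in the second part are independent across $k$, because the prime ranges are disjoint for different $k$. The first part is small after normalization by $a_{n_k}$: its variance is $O(\epsilon_k n_k\log n_k)$ by the covariance computation, and a Borel--Cantelli argument with an MDP-type bound at level $\epsilon_k^{1/2}$ disposes of it. For the second part, the MDP lower bound (same proof as Theorem~\ref{thm:FMDP}, restricted to these primes) gives
\[
\Prob\bigl(\|\widehat\Xi^{(2)}_{n_k}-f\|_\infty<\varepsilon\bigr)\ge \exp\bigl(-(1-\delta)\log\log n_k\bigr)=(\log n_k)^{-(1-\delta)}\approx (k\log k)^{-(1-\delta)},
\]
whose sum diverges. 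The second Borel--Cantelli lemma therefore gives infinitely many $k$ with $\|\widehat\Xi_{n_k}-f\|_\infty<2\varepsilon$.

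The delicate points are twofold. First, one must check that the MDP lower bound holds uniformly for the truncated prime sums; here the covariance of the truncated sums still converges to $C_\theta$, since the lost mass is $\int_0^{\epsilon_k}$. Second, one must verify that the weak dependence through shared multiples, present in the arithmetic reduction, does not destroy the independence across blocks. I would handle the latter by defining $G_p$ directly from the $\xi_{jp}$ and showing that the reduction error is $o(a_n)$ almost surely, which makes the block variables genuinely independent. Countably many $f$ from a dense subset of $\mathcal K_\theta$ then give a single full-measure $\Omega_0$. The $J_1$ statement for the step processes follows because $\|\Xi_n-\widehat\Xi_n\|_\infty\le\max_j|\log L_{j+1}-\log L_j|/a_n\le\log n/a_n\to0$.
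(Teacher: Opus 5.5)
Your proposal is correct, and its skeleton matches the paper's: the MDP upper bound at $b_n^2=2\log\log n$ with Borel--Cantelli on $n_k\approx\rho^k$, an interpolation step inside geometric blocks, independent lacunary prime blocks with the second Borel--Cantelli lemma, and negligibility of the omitted primes. The genuine difference is the interpolation step.

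You use the exact identity $\Xi_n(t)=(a_N/a_n)\,\Xi_N(tn/N)$ for $n\le N$. It holds because every $\Xi_n$ is a time rescaling of the single path $m\mapsto\log L_m-\E\log L_m$. This collapses the whole block $n_k\le n\le n_{k+1}$ onto the one process $\widehat\Xi_{n_{k+1}}$, and then no new probability estimate is needed. If $\norm{\widehat\Xi_{n_{k+1}}-g}_\infty<\varepsilon$ with $g\in\mathcal K_\theta$, then uniformly in the block
\[
\norm{\widehat\Xi_n-g}_\infty\le\rho^{1/2}\varepsilon+(\rho^{1/2}-1)\sup_tC_\theta(t,t)^{1/2}+\omega_{\mathcal K_\theta}(1-n_k/n_{k+1})+o(1).
\]
Here $\omega_{\mathcal K_\theta}$ is the common modulus of continuity of $\mathcal K_\theta$. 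It tends to $0$ by \eqref{eq:RKHS-equicontinuity} and the uniform continuity of $C_\theta$, and $1-n_k/n_{k+1}\to1-\rho^{-1}$. Your alternative, the MDP modulus bound \eqref{eq:MDP-modulus-bound} along $n_k$, also works: it gives probabilities $(k\log\rho)^{-2c\eta^2/\delta+o(1)}$, which are summable once $\delta<2c\eta^2$.

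The paper instead works with the large-prime process $C_n$, whose moving cutoff $\sqrt n$ breaks exact scaling. It proves Lemma~\ref{lem:LIL-geometric-interpolation} directly, using common geometric marks on the block, a split into shared and new primes, Proposition~\ref{prop:local-threshold}, and the maximal Bernstein inequality. That route is heavier, but it is a direct increment estimate that does not rely on the rows being time changes of one path. It would therefore carry over to genuinely triangular settings such as $\theta=\theta_n$. Your scaling argument is the classical Strassen interpolation and is shorter here.

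In the lower bound, your choice $n_k=k^k$ with cutoff $\epsilon_kn_k$ is equivalent to the paper's $n_m=\lfloor e^{m^\alpha}\rfloor$ with cutoff $n_{m-1}$. It has the small advantage that one sequence serves every $f$ with $\norm{f}_{\cH_\theta}<1$, since $\sum_k(k\log k)^{-(1-\delta)}=\infty$. Two clarifications are worth making.

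\begin{enumerate}
\item The negligibility of the primes $\sqrt{n_k}<p\le\epsilon_kn_k$ does not follow from an MDP, because the variance factor $\epsilon_k$ changes with $k$. It follows from the nonasymptotic bound \eqref{eq:local-threshold-Bernstein} with $J=(0,1]$, which gives probabilities at most $C(k\log k)^{-cu^2/\epsilon_k}$, exactly as in Lemma~\ref{lem:LIL-old-primes}.
\item There is no dependence through shared multiples to handle. Distinct primes above $\sqrt{n_k}$ have disjoint multiple sets in $[1,n_k]$. The $k$th block uses only $\xi_i$ with $i\in(\epsilon_kn_k,n_k]$, and these intervals are disjoint once $\epsilon_kn_k>n_{k-1}$.
\end{enumerate}
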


The endpoint and linear functional consequences follow by applying continuous maps to the compact cluster set.

\begin{corollary}\label{cor:endpoint-LIL}
Almost surely, we have
\begin{equation}\label{eq:endpoint-LIL-limsup}
 \limsup_{n\to\infty}
 \frac{\log L_n-\E\log L_n}
 {\sqrt{2n\log n\log\log n}}
 =\sigma_\theta,
\end{equation}
and
\begin{equation}\label{eq:endpoint-LIL-liminf}
 \liminf_{n\to\infty}
 \frac{\log L_n-\E\log L_n}
 {\sqrt{2n\log n\log\log n}}
 =-\sigma_\theta.
\end{equation}
More generally, if $\ell$ is a continuous linear functional on $C[0,1]$, then
\begin{equation}\label{eq:linear-functional-LIL}
 \limsup_{n\to\infty}\ell(\widehat\Xi_n)
 =\norm{\ell}_{\cH_\theta^\ast},
 \qquad
 \liminf_{n\to\infty}\ell(\widehat\Xi_n)
 =-\norm{\ell}_{\cH_\theta^\ast}
\end{equation}
almost surely, where $\norm{\ell}_{\cH_\theta^\ast}$ denotes the operator norm of the restriction of $\ell$ to the RKHS $\cH_\theta$, namely
\[
 \norm{\ell}_{\cH_\theta^\ast}^2
 :=\sup_{\norm{f}_{\cH_\theta}\le1}|\ell(f)|^2.
\]
This number is finite because the RKHS unit ball is compact in $C[0,1]$ and $\ell$ is continuous for the uniform norm.
\end{corollary}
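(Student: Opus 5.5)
We derive Corollary~\ref{cor:endpoint-LIL} from Theorem~\ref{thm:FLIL} by pushing the compact cluster set forward under continuous linear maps. Fix $\omega\in\Omega_0$ and a continuous linear functional $\ell$ on $C[0,1]$. The first step is a general fact about cluster sets. If $(x_n)$ is relatively compact in a metric space with cluster set $K$, then for any continuous $\phi$ the real sequence $\phi(x_n)$ is bounded and its set of limit points is exactly $\phi(K)$. This holds because every subsequence has a further subsequence converging to a point of $K$, and every point of $K$ is the limit of a subsequence. Hence $\limsup_n\ell(\widehat\Xi_n)=\max_{f\in\mathcal K_\theta}\ell(f)$ and $\liminf_n\ell(\widehat\Xi_n)=\min_{f\in\mathcal K_\theta}\ell(f)$. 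The maximum and minimum are attained because $\mathcal K_\theta$ is compact in $C[0,1]$ and $\ell$ is continuous. This compactness is implicit in Theorem~\ref{thm:FLIL}, since a cluster set of a relatively compact sequence is compact.

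The second step identifies these extrema. Since $\mathcal K_\theta$ is the symmetric unit ball of $\cH_\theta$, we have $\min_{\mathcal K_\theta}\ell=-\max_{\mathcal K_\theta}\ell$. Also $\max_{\mathcal K_\theta}\ell=\sup_{\norm f_{\cH_\theta}\le1}|\ell(f)|=\norm{\ell}_{\cH_\theta^\ast}$, where symmetry lets us replace $\ell(f)$ by $|\ell(f)|$. This proves \eqref{eq:linear-functional-LIL}.

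For the endpoint statements, take $\ell=\delta_1$, $\ell(g)=g(1)$. Since $\widehat\Xi_n(1)=\Xi_n(1)$ is exactly the normalized quantity in \eqref{eq:endpoint-LIL-limsup}, it remains to show $\norm{\delta_1}_{\cH_\theta^\ast}=\sigma_\theta$. We use the reproducing property: $C_\theta(\cdot,1)\in\cH_\theta$ and $f(1)=\ip{f}{C_\theta(\cdot,1)}_{\cH_\theta}$. Cauchy--Schwarz then gives $|f(1)|\le\norm f_{\cH_\theta}\,C_\theta(1,1)^{1/2}$. Equality holds for $f=C_\theta(\cdot,1)/\sigma_\theta$, which has unit norm and satisfies $f(1)=\sigma_\theta$. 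Here we need $\sigma_\theta>0$, which is clear from \eqref{eq:sigma-theta} since $0<q<1$. Thus the norm equals $\sqrt{C_\theta(1,1)}=\sigma_\theta$.

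The step needing most care is justifying that $\mathcal K_\theta$ is genuinely the RKHS ball with the reproducing identity. This requires that $\cH_\theta$ embeds in $C[0,1]$, which holds because $C_\theta$ is continuous. It also requires that point evaluation is a bounded functional on $\cH_\theta$, which is standard Aronszajn theory. The remaining steps are routine.
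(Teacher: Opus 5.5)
Your proposal is correct and follows the paper's own argument: push the compact cluster set $\mathcal K_\theta$ from Theorem~\ref{thm:FLIL} forward under the continuous functional (the evaluation $f\mapsto f(1)$ or a general $\ell$), then identify the extreme values using the symmetry of the unit ball and $\norm{C_\theta(1,\cdot)}_{\cH_\theta}^2=C_\theta(1,1)=\sigma_\theta^2$. Your version just spells out the steps the paper leaves implicit: the correspondence between limsup/liminf and the image of the cluster set, and the Cauchy--Schwarz extremizer $C_\theta(\cdot,1)/\sigma_\theta$.
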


\begin{remark}\label{rem:Poisson-regimes}
The present paper keeps the retention probability $\theta$ fixed in $(0,1)$.  If $\theta=\theta_n$ varies with $n$, the large prime mechanism changes thoroughly.  In the sparse and nearly complete regimes in the setting of \cite{AKM}, the natural limits are Poissonian rather than Gaussian, and the proofs for corresponding functional limit theorems use point process and Poisson array approximations instead of the geometric mark entropy and RKHS structure developed here.  Functional Poisson limits and the corresponding large and moderate deviations for the varying $\theta_n$ model are treated in the companion working paper \cite{WYZ}.
\end{remark}

\subsection{Proof strategies}\label{subsec:proof-strategies}
For convenience, the letter $p$ always denotes a prime number throughout the paper.
The proofs start from the same idea that underlies the central limit theorem in \cite{AKM}, where the  key arithmetic identity is the von Mangoldt expansion. Recall that the von Mangoldt function is defined by
\[
 \Lambda(m)=
 \begin{cases}
 \log p,&m=p^r\text{ for a prime }p\text{ and an integer }r\ge1,\\
 0,&\text{otherwise},
 \end{cases}
\]
see, for example, Apostol \cite[Chapter~4]{Apostol}.  If $I_A(m):=\1_{\{A\cap m\N\ne\varnothing\}}$ for a finite set $A\subset\N$, then
\[
 \log\lcm(A)=\sum_{m\ge1}\Lambda(m)I_A(m),
\]
which is also stated in \cite[Lemma~2.1]{AKM}.  The terms coming from primes at most $\sqrt n$, together with the higher prime powers, are uniformly $O(\sqrt n)$.  They are negligible for all three asymptotic scales considered here.

After this reduction, only primes $p>\sqrt n$ remain.  For two such primes, the sets of multiples $p\N\cap[1,n]$ and $q\N\cap[1,n]$ are disjoint.  Hence the large prime part splits into independent prime coordinates.  For such a prime $p$, let $G_{n,p}$ be the first selected multiple index of $p$, constructed on an enlarged probability space as in \eqref{eq:geometric-mark}.  Then $G_{n,p}$ has the geometric law \eqref{eq:q-pi}, and the large prime process can be written as
\[
 S_n(t)=\sum_{\sqrt n<p\le n}\log p\,
 \1_{\{(p/n)G_{n,p}\le t\}}.
\]
Thus the problem becomes a triangular array of independent weighted threshold processes.  The prime number theorem then converts sums over $p$ into integrals over $x=p/n$.  This conversion gives the mean and covariance in the central limit theorem.

For the functional LDP, the same independent representation gives finite-dimensional logarithmic moment generating functions.  The G\"artner--Ellis theorem \cite[Theorem~2.3.6]{DZ} yields finite-dimensional large deviations.  The limiting rate is first obtained as a projective Legendre transform and then identified with the entropy contraction in Theorem~\ref{thm:FLDP}.  The functional lift uses an exponential equicontinuity estimate.  In the LDP case this estimate is readily obtained because the process is monotone.

For the functional MDP, the centered large prime process is expanded to second order.  The quadratic term is exactly the covariance kernel $C_\theta$, while the cubic remainder is controlled by the assumption \eqref{eq:MDP-scale}.  This gives finite-dimensional moderate deviations with the Gaussian quadratic rate.  Since centering destroys monotonicity, the functional lift uses the local maximal estimate for weighted threshold processes proved in Proposition~\ref{prop:local-threshold} in Appendix~\ref{sec:appendix-local}.  The RKHS representation of the rate follows from a Hilbert space contraction.

The functional LIL uses the MDP at the critical scale $b_n=\sqrt{2\log\log n}$.  The upper cluster set inclusion follows by applying the MDP on a geometric subsequence and then filling the gaps by a block interpolation estimate.  The lower inclusion is obtained from independent lacunary prime blocks.  These blocks have the same MDP rate, and the second Borel--Cantelli lemma provides subsequences approaching every point of the RKHS unit ball.  Finally, the deterministic arithmetic approximation carries all conclusions from the large prime process over to $\log L_{\lfloor nt\rfloor}$.

\section{Numerical simulations}\label{sec:numerics}

This section gives some numerical validations for the endpoint LDP and MDP formulas.  The computations use the independent large prime process
\[
 S_n(t)=\sum_{\sqrt n<p\le n}\log p\,
 \1_{\{(p/n)G_{n,p}\le t\}},
\]
where the variables $G_{n,p}$ are independent geometric marks with law \eqref{eq:q-pi}.  By Lemma~\ref{lem:deterministic-approximation}, $S_n(t)$ and $\log L_{\lfloor nt\rfloor}$ differ by only $O(\sqrt n)$ uniformly in $t$.  This deterministic error is negligible on both the LDP and MDP scales considered here.  The plots here therefore illustrate the  large prime mechanism that appears in the proofs.

Throughout this section, we set $\theta=1/2$.  The numerical values used in the plots are
\[
 c_\theta\approx 0.693147,\qquad \sigma_\theta^2\approx 0.169899.
\]
The finite $n$ cumulants below are computed from exact product formulas for the independent prime coordinates.  The rare probability estimates are obtained by exponential tilting.  In that procedure the simulation is performed under the tilted Bernoulli law in \eqref{eq:finite-n-tilted-probability}, and the likelihood ratio is used to recover probabilities under the original law.  The displayed Monte Carlo standard errors on the rate scale are computed from the weighted second moment by the delta method. The limiting endpoint rate is
\begin{equation}\label{eq:numerical-endpoint-rate}
 I_\theta^{\mathrm{end}}(y)
 =\sup_{\lambda\in\R}\{\lambda y-\Lambda_\theta(\lambda)\},
 \quad
 \Lambda_\theta(\lambda)
 =\sum_{j\ge1}\frac{1}{j(j+1)}
 \log\{q^j+(1-q^j)e^\lambda\}.
\end{equation}
For $y\in(0,1)$, the optimizing parameter is the unique solution of
\begin{equation}\label{eq:numerical-saddle-point}
 y=\Lambda_\theta'(\lambda)
 =\sum_{j\ge1}\frac{1}{j(j+1)}
 \frac{(1-q^j)e^\lambda}{q^j+(1-q^j)e^\lambda}.
\end{equation}
Figure~\ref{fig:endpoint-rate-quadratic} shows the endpoint rate and its local MDP approximation.  The quadratic approximation is accurate near the typical value and separates from the full LDP rate in the tails.

\begin{figure}[htbp]
\centering
\includegraphics[width=.74\textwidth]{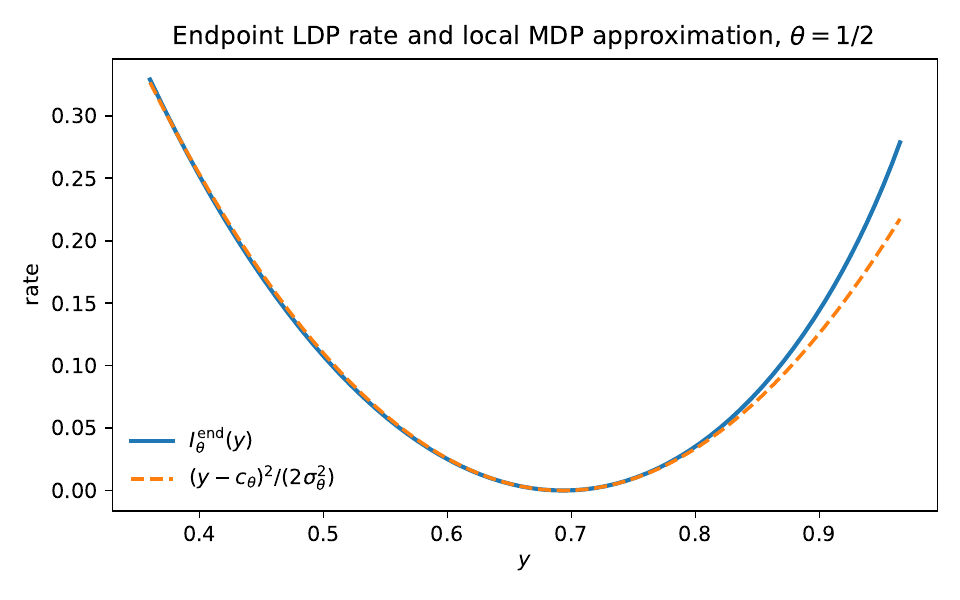}
\caption{Endpoint rate $I_\theta^{\mathrm{end}}$ and its quadratic MDP approximation for $\theta=1/2$.}
\label{fig:endpoint-rate-quadratic}
\end{figure}

The finite $n$ large prime cumulant is explicit.  If we denote $k_p=\lfloor n/p\rfloor$, and define
\begin{equation}\label{eq:finite-n-layer-cgf}
 \Lambda_{\theta,n}^{\mathrm{LP}}(\lambda)
 :=\frac{\log n}{n}\sum_{\sqrt n<p\le n}
 \log\left[
 q^{k_p}+(1-q^{k_p})
 \exp\left\{\lambda\frac{\log p}{\log n}\right\}
 \right]
\end{equation}
and
\begin{equation}\label{eq:finite-n-layer-rate}
 I_{\theta,n}^{\mathrm{LP}}(y)
 :=\sup_{\lambda\in\R}
 \{\lambda y-\Lambda_{\theta,n}^{\mathrm{LP}}(\lambda)\}.
\end{equation}
The proof of the endpoint LDP gives $\Lambda_{\theta,n}^{\mathrm{LP}}(\lambda)\to\Lambda_\theta(\lambda)$ and hence convergence of the associated Legendre transforms on compact subintervals of $(0,1)$.  Figure~\ref{fig:finite-n-rate} displays this convergence.  

\begin{figure}[htbp]
\centering
\includegraphics[width=.74\textwidth]{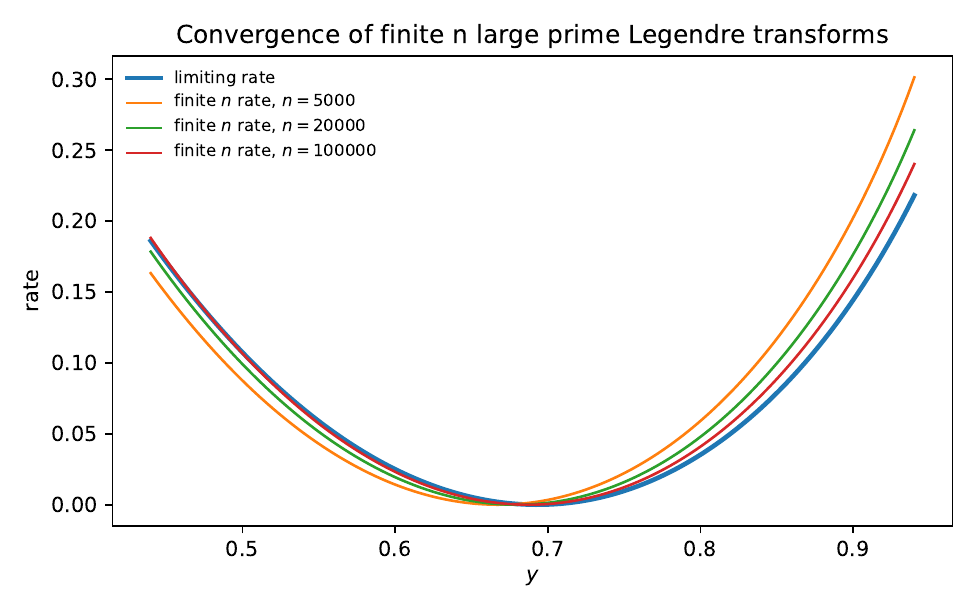}
\caption{Exact finite $n$ Legendre transforms for the large prime process and their limiting endpoint rate.}
\label{fig:finite-n-rate}
\end{figure}

We next check the far tails by exponential tilting.  For a bin $B=[y-h,y+h]$ lying on one side of the finite $n$ mean, let $z_*$ be the endpoint of $B$ closest to that mean and choose $\lambda_*$ so that $(\Lambda_{\theta,n}^{\mathrm{LP}})'(\lambda_*)=z_*$.  Under the tilted law, the success probability attached to a prime $p$ is
\begin{equation}\label{eq:finite-n-tilted-probability}
 \alpha_{n,p}(\lambda_*)
 =\frac{(1-q^{k_p})
 \exp\{\lambda_*\log p/\log n\}}
 {q^{k_p}+(1-q^{k_p})
 \exp\{\lambda_*\log p/\log n\}}.
\end{equation}
The likelihood ratio is evaluated in logarithmic form.  We also report the first order saddlepoint approximation
\begin{equation}\label{eq:finite-n-saddlepoint-bin}
\begin{aligned}
 \Prob\{n^{-1}S_n(1)\in B\}
 &\approx
 \frac{1-e^{-v_n|\lambda_*|\,2h}}
 {|\lambda_*|\sqrt{2\pi v_n
 (\Lambda_{\theta,n}^{\mathrm{LP}})''(\lambda_*)}}\\
 &\qquad{}\times
 \exp\{-v_n I_{\theta,n}^{\mathrm{LP}}(z_*)\},
 \qquad v_n=\frac n{\log n}.
\end{aligned}
\end{equation}
This approximation serves solely as a numerical benchmark. Figure~\ref{fig:importance-sampling} and Table~\ref{tab:importance-sampling} show that the importance sampling (IS) estimates agree with the finite $n$ Legendre exponent and with the saddlepoint values within the plotted Monte Carlo error.  This confirms that the rare events are governed by the same large prime mechanism as in the proof.

\begin{figure}[htbp]
\centering
\includegraphics[width=.74\textwidth]{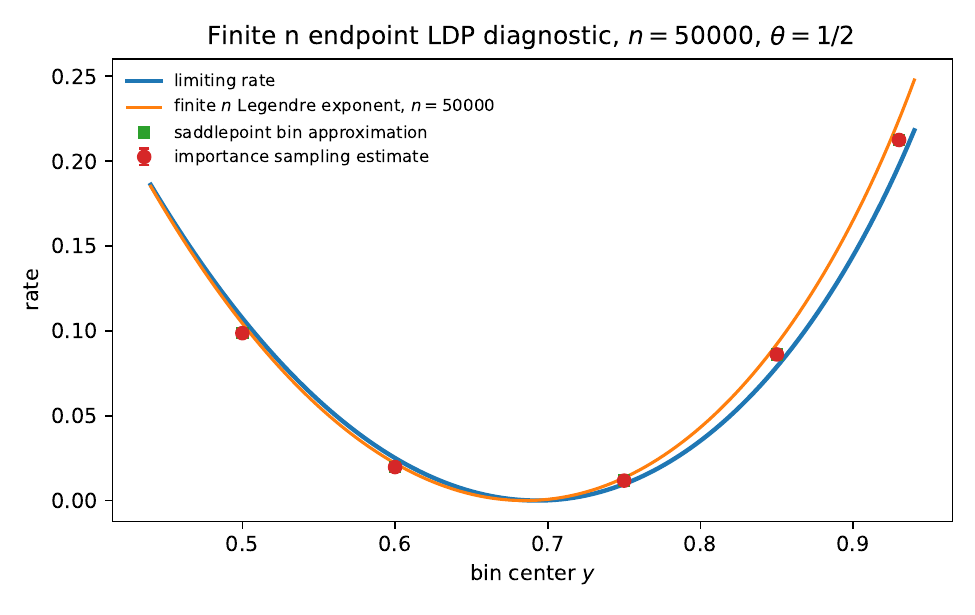}
\caption{Finite $n$ endpoint LDP diagnostic for the large prime process.  Squares show the first order saddlepoint bin approximation, and circles show IS estimates.  Error bars are approximate $95\%$ Monte Carlo intervals on the rate scale.}
\label{fig:importance-sampling}
\end{figure}

\begin{table}[htbp!]
\centering
\caption{IS and saddlepoint checks for the large prime layer at $n=50000$, $\theta=1/2$, bin half-width $h=0.006$, and 6000 tilted replications for each bin.  The finite $n$ exponent is the infimum of the exact finite $n$ Legendre transform over the bin.}
\label{tab:importance-sampling}
\small
\begin{tabular}{@{}c c c c c c@{}}
\hline
$y$ & Legendre exponent & saddlepoint & IS estimate & rate s.e. & tilted hit rate \\
\hline
0.50 & 0.0977 & 0.0986 & 0.0986 & 0.00002 & 0.489 \\
0.60 & 0.0190 & 0.0197 & 0.0197 & 0.00001 & 0.472 \\
0.75 & 0.0110 & 0.0117 & 0.0117 & 0.00001 & 0.490 \\
0.85 & 0.0852 & 0.0861 & 0.0861 & 0.00002 & 0.493 \\
0.93 & 0.2115 & 0.2125 & 0.2125 & 0.00002 & 0.500 \\
\hline
\end{tabular}
\end{table}

The MDP can be checked without estimating rare probabilities.  For $u_{n,p}(\lambda)=\lambda b_n\log p/\sqrt{n\log n}$, define the centered finite $n$ logarithmic moment generating function
\begin{equation}\label{eq:finite-n-MDP-cgf}
\begin{aligned}
 K_{\theta,n,b_n}(\lambda)
 :=\frac1{b_n^2}\sum_{\sqrt n<p\le n}
 \bigg[&\log\{q^{k_p}+(1-q^{k_p})e^{u_{n,p}(\lambda)}\}-(1-q^{k_p})u_{n,p}(\lambda)\bigg].
\end{aligned}
\end{equation}
Theorem~\ref{thm:FMDP} predicts
$K_{\theta,n,b_n}(\lambda)\to\lambda^2\sigma_\theta^2/2$ under \eqref{eq:MDP-scale}.  Figure~\ref{fig:mdp-cumulants} shows this convergence for $b_n=(\log n)^{1/4}$.

\begin{figure}[htbp]
\centering
\includegraphics[width=.74\textwidth]{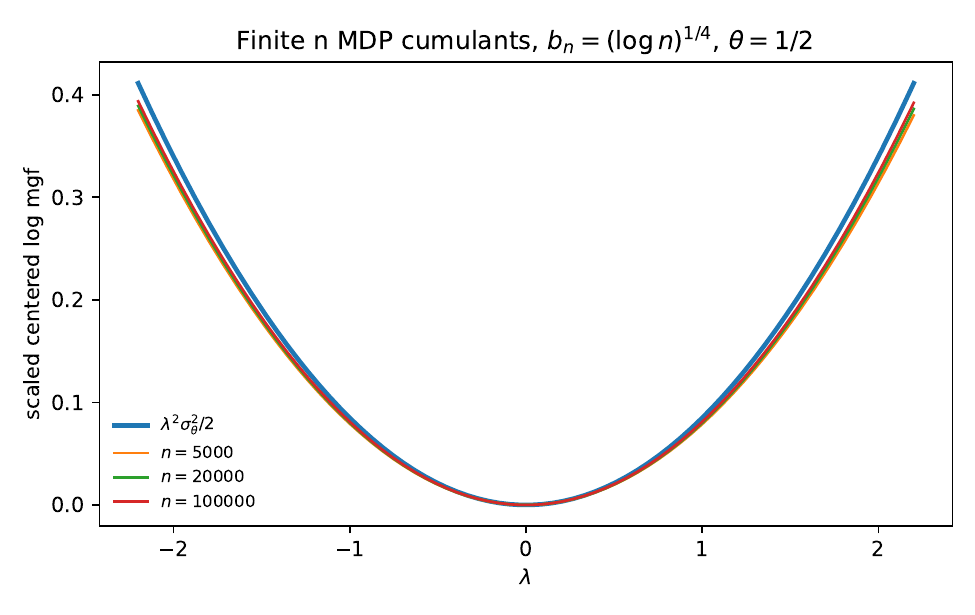}
\caption{Centered finite $n$ logarithmic moment generating functions and the limiting MDP quadratic cumulant for $\theta=1/2$.}
\label{fig:mdp-cumulants}
\end{figure}

\section{Structure of the rate functions}\label{sec:rates}

This section gives alternative formulas and the main properties of the two functional rate functions.  These results are also used in the proofs of Theorems~\ref{thm:FLDP} and~\ref{thm:FMDP}.  A supplementary Euler-type optimality equation for possible dual maximizers of the large deviation rate is placed in Appendix~\ref{sec:appendix-euler}.  It is not needed in the main arguments.

\subsection{Entropy and convex-dual representations of the LDP rate}

Let $\mathscr K$ be the collection of measurable probability kernels
\[
 x\longmapsto\nu_x=(\nu_x(k))_{k\ge1}
\]
from $[0,1]$ to $\N$.  Define
\begin{equation}\label{eq:full-entropy}
 \mathcal E_\theta(\nu)
 :=\int_0^1H(\nu_x\mid\pi)\dd x,
 \qquad
 H(\nu_x\mid\pi)
 :=\sum_{k\ge1}\nu_x(k)
 \log\frac{\nu_x(k)}{\pi_k},
\end{equation}
and
\begin{equation}\label{eq:full-map}
 (T\nu)(t)
 :=\int_0^1\sum_{k\ge1}
 \1_{\{xk\le t\}}\nu_x(k)\dd x.
\end{equation}

We first compare the reduced entropy formula with the full geometric kernel entropy.

\begin{proposition}\label{prop:entropy-equivalence}
For every $f\in C[0,1]$,
\begin{equation}\label{eq:full-entropy-rate}
 I_\theta(f)
 =\inf\left\{
 \mathcal E_\theta(\nu):\nu\in\mathscr K,\ T\nu=f
 \right\}.
\end{equation}
The full kernel contraction in \eqref{eq:full-entropy-rate} is equal to the reduced formula \eqref{eq:functional-rate-main}.  Whenever the value in \eqref{eq:full-entropy-rate} is finite, the infimum is attained.
\end{proposition}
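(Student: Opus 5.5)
The plan is to connect the two contractions through the coarse-graining map $\nu\mapsto a$ given by $a_k(x)=\nu_x(k)$ for $1\le k\le m(x)$ and $a_0(x)=\sum_{k>m(x)}\nu_x(k)$. Note that $k\le m(x)$ is exactly the condition $xk\le1$.

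First I check that $T\nu=\mathcal T a$. Fix $t\in[0,1]$. For $k>m(x)$ we have $xk>1\ge t$, so only indices $k\le m(x)$ contribute to $(T\nu)(t)$. Writing $(T\nu)(t)=\sum_k\int_0^1\1_{\{x\le t/k\}}\nu_x(k)\dd x$ by Tonelli, and using that $x\le t/k$ with $t\le1$ forces $k\le m(x)$, we get $\sum_k\int_0^{t/k}a_k(x)\dd x=(\mathcal Ta)(t)$.

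Next comes the entropy comparison, which is the key step. By the chain rule for relative entropy (equivalently, the log-sum inequality on the tail block), for each $x$
\[
H(\nu_x\mid\pi)=\sum_{k=1}^{m(x)}a_k\log\frac{a_k}{\pi_k}+a_0\log\frac{a_0}{q^{m(x)}}+a_0\,H\bigl(\nu_x(\cdot\mid k>m(x))\,\big|\,\pi(\cdot\mid k>m(x))\bigr).
\]
Here we use $\sum_{k>m}\pi_k=q^{m}$. Hence $\mathcal E_\theta(\nu)\ge\mathcal R_\theta(a)$, which gives the inequality $\inf\mathcal E_\theta\ge I_\theta(f)$.

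Conversely, given $a\in\cA$ with $\mathcal Ta=f$, I define $\nu_x(k)=a_k(x)$ for $k\le m(x)$ and $\nu_x(k)=a_0(x)\pi_k/q^{m(x)}$ for $k>m(x)$. This is measurable because $m$ is a step function, and the conditional entropy term vanishes. So $\mathcal E_\theta(\nu)=\mathcal R_\theta(a)$ and $T\nu=f$, which yields equality of the two infima.

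For attainment it suffices, by the lifting just described, to attain the reduced infimum $I_\theta(f)<\infty$. I would take a minimizing sequence $a^{(n)}$ and use that on each strip $x\in(1/(j+1),1/j]$ the vector $a(x)$ lives in a fixed finite simplex of dimension $j$. I would view $a^{(n)}_k$ as elements of $L^1$, bounded by $1$ and hence uniformly integrable. By Dunford--Pettis (or weak-$*$ compactness in $L^\infty$) I would extract weakly convergent subsequences for each $k$ by a diagonal argument. The constraints \eqref{eq:reduced-kernel-constraints} are preserved under weak limits, since they are linear with nonnegativity and a finite sum on each strip. The map $\mathcal T$ is continuous under weak convergence because $(\mathcal Ta)(t)$ integrates the coordinates against indicators. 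One subtlety is that the sum over $k$ is infinite in principle, but at each $t$ only $k\le m(x)$ contribute; I would control the tail in $k$ via $\int_0^{t/k}a_k\le$ the mass supported on $x\le1/k$, using dominated convergence in the series since $\sum_k\int_0^{1/k}a_k(x)\dd x\le1$.

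The main obstacle is lower semicontinuity of $\mathcal R_\theta$ under this weak convergence, together with the fact that the reference weights blow up logarithmically: $-\log\pi_k\sim k\log(1/q)$ and $-\log q^{m(x)}$ is unbounded as $x\to0$. The integrand $\phi_x(a)=\sum_k a_k\log(a_k/\pi_k)+a_0\log(a_0/q^{m(x)})$ is convex in $a$ and bounded below by $0$, since it is a relative entropy between probability vectors. I would therefore argue strip by strip, where it is a finite-dimensional convex continuous integrand. Convex normal integrands give weakly lower semicontinuous integral functionals (Ioffe's theorem, or directly Mazur's lemma plus Fatou). Summing over strips with nonnegative integrands and applying Fatou across strips gives $\mathcal R_\theta(a)\le\liminf\mathcal R_\theta(a^{(n)})=I_\theta(f)$, so the limit is a minimizer.
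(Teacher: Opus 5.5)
Your reduction step is exactly the paper's: the coarse-graining $\nu\mapsto a$, the tail chain rule giving $\mathcal E_\theta(\nu)\ge\mathcal R_\theta(a)$ with $T\nu=\mathcal T a$, and the lift $\nu_x(k)=a_0(x)\pi_k/q^{m(x)}$ for $k>m(x)$. The proposal is correct.

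For attainment you take a genuinely different route. The paper stays with full kernels. It encodes $\nu$ as the joint measure $\gamma_\nu(\dd x,\{k\})=\nu_x(k)\dd x$ on $[0,1]\times\N$, so that $\mathcal E_\theta(\nu)=H(\gamma_\nu\mid\operatorname{Leb}\otimes\pi)$. It then uses:
\begin{itemize}
\item weak compactness of relative-entropy sublevel sets, where the entropy bound is what gives tightness in the unbounded $k$-direction;
\item weak closedness of the first-marginal constraint, together with disintegration;
\item the portmanteau theorem, using that the boundary of $\{xk\le t\}$ is Lebesgue-null, to get continuity of $\nu\mapsto(T\nu)(t)$.
\end{itemize}

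You instead work with reduced kernels, where compactness comes free. The set $\cA$ is weak-$*$ sequentially compact coordinatewise in $L^\infty$, with no entropy bound at all, and the linear constraints pass to the limit strip by strip. Lower semicontinuity comes from finite-dimensional convexity on each strip $\{m(x)=j\}$ plus Fatou over strips. This is sound, because each strip integrand is a bounded, continuous, convex and nonnegative function on a fixed finite simplex; it is the relative entropy of $(a_0,\ldots,a_j)$ against the probability vector $(q^j,\pi_1,\ldots,\pi_j)$.

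Your route is more elementary: no disintegration and no Polish-space entropy machinery. The paper's route has the advantage that it produces compactness of $\{\nu:\mathcal E_\theta(\nu)\le M\}$ together with continuity of $T$ on full kernels. That is reused directly in Proposition~\ref{prop:dual-rate}, both for attainment in $\mathcal J_F$ and for the finite-intersection argument. Your reduced-kernel compactness could serve the same purpose, but it would have to be restated there.

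One small correction concerns the tail in $k$. The bound $\sum_k\int_0^{1/k}a_k(x)\dd x\le1$ holds for each $n$, but it does not by itself give a dominating sequence uniform in $n$. What you actually need follows at once from the pointwise simplex constraint and the support condition $a_k=0$ for $x>1/k$:
\[
\sum_{k>K}\int_0^{t/k}a^{(n)}_k(x)\dd x\le\int_0^{1/(K+1)}\sum_{k>K}a^{(n)}_k(x)\dd x\le\frac1{K+1},
\]
uniformly in $n$. With this bound, $\mathcal T a^{(n)}(t)\to\mathcal T a(t)$ follows from termwise weak-$*$ convergence.
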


\begin{proof}
Fix $x\in(0,1]$ and put $m=m(x)$.  Given a probability vector $\nu_x$, define
\[
 a_k(x):=\nu_x(k),\quad 1\le k\le m,
 \qquad
 a_0(x):=\sum_{k>m}\nu_x(k).
\]
Since $\pi_k=\theta q^{k-1}$,
\begin{equation}\label{eq:geometric-tail-mass}
 \sum_{k>m}\pi_k=q^m.
\end{equation}
If $a_0(x)>0$, define probability measures on the tail $\{m+1,m+2,\ldots\}$ by
\[
 \widetilde\nu_x(k):=\frac{\nu_x(k)}{a_0(x)},
 \qquad
 \widetilde\pi_m(k):=\frac{\pi_k}{q^m},
 \qquad k>m.
\]
If $a_0(x)=0$, the conditional law $\widetilde\nu_x$ is immaterial.  For definiteness one may take $\widetilde\nu_x=\widetilde\pi_m$, and all displayed terms multiplied by $a_0(x)$ are interpreted as zero.  Splitting the entropy into the visible coordinates, the total tail mass and the conditional distribution inside the tail gives the exact chain rule
\begin{align}\label{eq:entropy-chain-tail}
 H(\nu_x\mid\pi)
 =&\sum_{k=1}^{m}a_k(x)
 \log\frac{a_k(x)}{\pi_k}
 +a_0(x)\log\frac{a_0(x)}{q^m}
 +a_0(x)H(\widetilde\nu_x\mid\widetilde\pi_m).
\end{align}
The last term in \eqref{eq:entropy-chain-tail} is nonnegative.
Moreover, for $0\le t\le1$, the inequality $xk\le t$ implies $k\le m(x)$.  Thus the path $T\nu$ depends only on the visible coordinates and satisfies
\[
 T\nu=\mathcal T a.
\]
After integration in $x$, \eqref{eq:entropy-chain-tail} yields
\[
 \mathcal E_\theta(\nu)\ge\mathcal R_\theta(a).
\]

Conversely, given $a\in\cA$, define the lifted kernel
\begin{equation}\label{eq:optimal-tail-lift}
 \nu_x(k):=
 \begin{cases}
 a_k(x),&1\le k\le m(x),\\[1mm]
 a_0(x)\pi_k/q^{m(x)},&k>m(x).
 \end{cases}
\end{equation}
The functions in \eqref{eq:optimal-tail-lift} are measurable because $m(x)$ is measurable and piecewise constant.  The kernel is a probability kernel by \eqref{eq:reduced-kernel-constraints} and \eqref{eq:geometric-tail-mass}.  Its conditional tail law is $\widetilde\pi_{m(x)}$, so the last term in \eqref{eq:entropy-chain-tail} vanishes.  Hence
\[
 T\nu=\mathcal T a,
 \qquad
 \mathcal E_\theta(\nu)=\mathcal R_\theta(a),
\]
which proves the equality of the two contractions.

It remains to verify attainment.  Associate with $\nu$ the probability measure
\[
 \gamma_\nu(\dd x,\{k\})=\nu_x(k)\dd x
\]
on the Polish space $E=[0,1]\times\N$.  Its first marginal is Lebesgue measure and
\[
 \mathcal E_\theta(\nu)
 =H(\gamma_\nu\mid \operatorname{Leb}\otimes\pi).
\]
Relative entropy is lower semicontinuous for weak convergence, and its sublevel sets relative to a fixed reference probability measure are tight and weakly compact.  See, for example, \cite[Section~6.2]{DZ}.  Note that, if $\gamma_n\Rightarrow\gamma$ and the first marginal of every $\gamma_n$ is Lebesgue measure, then testing against bounded continuous functions depending only on the first coordinate shows that the first marginal of $\gamma$ is also Lebesgue measure.  In other words, the set of probability measures whose first marginal is Lebesgue measure is weakly closed.  Furthermore, since $[0,1]\times\N$ is a standard Borel space, every measure in this closed set admits a measurable disintegration with respect to its first marginal.  Consequently,
\begin{equation}\label{eq:kernel-compactness}
 \{\nu\in\mathscr K:\mathcal E_\theta(\nu)\le M\}
\end{equation}
is compact when kernels are identified with their joint measures $\gamma_\nu$.

For fixed $t\in[0,1]$, set
\[
 B_t:=\{(x,k)\in E:xk\le t\}.
\]
Its boundary is contained in the countable set $\{(t/k,k):k\ge1\}$.  Every measure with first marginal Lebesgue measure gives this boundary zero mass.  The portmanteau theorem consequently implies that $\nu\mapsto(T\nu)(t)=\gamma_\nu(B_t)$ is continuous on the kernel space.  Hence the constraint $T\nu=f$ is closed when imposed at all times.  Intersecting this closed constraint with a sufficiently large compact entropy sublevel and using a minimizing sequence proves attainment whenever the infimum is finite.
\end{proof}

Let $\cM_{\mathrm{at}}[0,1]$ denote the finite signed atomic measures on $[0,1]$.  For $\mu\in\cM_{\mathrm{at}}[0,1]$ and $x\in(0,1]$, define
\begin{equation}\label{eq:Zmu}
 Z_\mu(x)
 :=q^{m(x)}+
 \sum_{k=1}^{m(x)}\pi_k
 \exp\{\mu([xk,1])\},
\end{equation}
and
\begin{equation}\label{eq:functional-cumulant-measure}
 \mathfrak L_\theta(\mu)
 :=\int_0^1\log Z_\mu(x)\dd x.
\end{equation}
The term $q^{m(x)}$ in \eqref{eq:Zmu} is the total mass of the invisible marks $k>m(x)$.

The same rate also has the following finite dimensional convex-dual representation.

\begin{proposition}\label{prop:dual-rate}
For every $f\in C[0,1]$,
\begin{equation}\label{eq:functional-dual-rate}
 I_\theta(f)
 =\sup_{\mu\in\cM_{\mathrm{at}}[0,1]}
 \bigg\{
 \int_{[0,1]}f(t)\mu(\dd t)-\mathfrak L_\theta(\mu)
 \bigg\}.
\end{equation}
Equivalently,
\begin{align}
 I_\theta(f)
 =\sup_{m,\boldsymbol t,\lambda}
 \bigg\{
 &\sum_{j=1}^m\lambda_jf(t_j)
 \nonumber\\
 &-\int_0^1\log\bigg[
 q^{m(x)}+
 \sum_{k=1}^{m(x)}\pi_k
 \exp\bigg\{\sum_{j=1}^m
 \lambda_j\1_{\{xk\le t_j\}}\bigg\}
 \bigg]\dd x
 \bigg\},
 \label{eq:projective-rate}
\end{align}
where the supremum is over $m\ge1$, $0\le t_1<\cdots<t_m\le1$, and $\lambda\in\R^m$.

For a fixed atomic measure $\mu$, the pointwise optimizer in the finite-state Gibbs variational formula used in \eqref{eq:finite-gibbs} is the reduced kernel
\begin{equation}\label{eq:tilted-reduced-kernel}
 a_k^\mu(x)
 =\frac{\pi_k e^{\mu([xk,1])}}{Z_\mu(x)},
 \quad 1\le k\le m(x),
 \qquad
 a_0^\mu(x)=\frac{q^{m(x)}}{Z_\mu(x)}.
\end{equation}
\end{proposition}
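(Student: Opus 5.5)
The plan is to prove the duality in two inequalities, working throughout with the reduced entropy $\mathcal R_\theta$. By Proposition~\ref{prop:entropy-equivalence} this is equivalent to the full kernel formulation. The first step is a pointwise Gibbs variational formula. Fix $\mu\in\cM_{\mathrm{at}}[0,1]$ and $x\in(0,1]$. For any probability vector $(a_0,a_1,\dots,a_{m(x)})$ we have
\begin{equation}\label{eq:finite-gibbs}
 \sum_{k=1}^{m(x)}a_k\mu([xk,1])-\bigg[\sum_{k=1}^{m(x)}a_k\log\frac{a_k}{\pi_k}+a_0\log\frac{a_0}{q^{m(x)}}\bigg]\le\log Z_\mu(x).
\end{equation}
Equality holds exactly at the vector \eqref{eq:tilted-reduced-kernel}. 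This is Jensen's inequality, or equivalently nonnegativity of the relative entropy with respect to the tilted law $a^\mu(x)$, on the finite set $\{0,1,\dots,m(x)\}$. The invisible coordinate carries weight $q^{m(x)}$ and tilt $0$.

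Next, Fubini gives $\int f\,d\mu=\sum_k\int_0^1 a_k(x)\mu([xk,1])\dd x$ whenever $f=\mathcal Ta$. The reason is that $\int \1_{\{xk\le t\}}\mu(\dd t)=\mu([xk,1])$, and for $x>t/k$ with $t\le1$ only $k\le m(x)$ contribute. Integrating \eqref{eq:finite-gibbs} in $x$ then yields $\int f\,d\mu-\mathfrak L_\theta(\mu)\le\mathcal R_\theta(a)$. Taking the infimum over $a$ and the supremum over $\mu$ gives ``$\ge$'' in \eqref{eq:functional-dual-rate}. Measurability and integrability cause no trouble: $\log Z_\mu$ is bounded by $\|\mu\|_{TV}$ in absolute value, and $\mathcal R_\theta(a)$ may be assumed finite. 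The form \eqref{eq:projective-rate} is simply a rewriting, since an atomic measure $\sum_j\lambda_j\delta_{t_j}$ gives $\mu([xk,1])=\sum_j\lambda_j\1_{\{xk\le t_j\}}$.

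The main obstacle is the reverse inequality. I would identify the right-hand side as the Legendre transform, on $C[0,1]$ paired with $\cM[0,1]$, of the convex lower semicontinuous function $I_\theta$. The first task is to show that $I_\theta$ is convex and lower semicontinuous on $C[0,1]$.
\begin{itemize}
\item[] \emph{Convexity} follows because $\mathcal T$ is linear and $\mathcal R_\theta$ is jointly convex in $a$.
\item[] \emph{Lower semicontinuity}, even goodness, follows from the compactness argument in the proof of Proposition~\ref{prop:entropy-equivalence}. The entropy sublevel sets are weakly compact and $\nu\mapsto (T\nu)(t)$ is continuous for each $t$, so $\{I_\theta\le M\}$ is closed under pointwise limits, and hence closed in $C[0,1]$.
\end{itemize}
The Fenchel--Moreau theorem then gives $I_\theta(f)=\sup_{\mu\in\cM[0,1]}\{\int f\,d\mu-I_\theta^*(\mu)\}$. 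Computing $I_\theta^*(\mu)=\sup_a\{\int\mathcal Ta\,d\mu-\mathcal R_\theta(a)\}$ is pointwise maximization in $x$: with $Z_\mu$ defined via $\mu([xk,1])$ for general signed $\mu$, choosing $a=a^\mu$ attains equality in \eqref{eq:finite-gibbs}, so $I_\theta^*(\mu)=\mathfrak L_\theta(\mu)$.

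It remains to pass from general finite signed measures to atomic ones. I would approximate $\mu$ weakly by atomic $\mu_N$ with $\|\mu_N\|\le\|\mu\|$, for instance by discretizing $\mu$ onto a grid. Since $f$ is continuous, $\int f\,d\mu_N\to\int f\,d\mu$. For the cumulants, $\mu_N([xk,1])\to\mu([xk,1])$ for every $x$ outside the countable set where $xk$ is an atom of $\mu$, which is Lebesgue null. Together with the uniform bound on $\log Z$, dominated convergence gives $\mathfrak L_\theta(\mu_N)\to\mathfrak L_\theta(\mu)$. A careful choice of the discretization, rounding consistently to the right endpoint so that intervals $[s,1]$ are preserved, makes this work. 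This completes ``$\le$''.

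As an alternative route to ``$\le$'' if the Fenchel--Moreau setup proves delicate, I would use a direct minimax argument. This would apply Sion's theorem to the convex--concave function $(a,\mu)\mapsto\mathcal R_\theta(a)-\int(\mathcal Ta-f)\,d\mu$, using compactness of the sublevel sets of $\mathcal R_\theta$.
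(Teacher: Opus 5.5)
Your argument is correct, but the reverse inequality takes a different route from the paper's.

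\textbf{The paper's route.} The paper keeps all convex duality finite-dimensional. For each finite $F=\{t_1,\dots,t_m\}$ it introduces the marginal contraction $\mathcal J_F(z)=\inf\{\mathcal E_\theta(\nu):((T\nu)(t_j))_j=z\}$. It uses the integrated Gibbs formula to identify the conjugate of $\mathcal J_F$ with the cumulant $\Lambda_F$, and applies Fenchel--Moreau in $\R^m$ to get $\mathcal J_F=\Lambda_F^*$. It then passes from finitely many times to the whole path by a finite intersection argument. Let $M$ be the right-hand side of \eqref{eq:projective-rate}. The sets $C_t=\{\nu:\mathcal E_\theta(\nu)\le M,\ (T\nu)(t)=f(t)\}$ are closed subsets of a compact entropy sublevel set. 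By attainment in $\mathcal J_F$ they have the finite intersection property, which produces a kernel with $T\nu=f$ and $\mathcal E_\theta(\nu)\le M$.

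\textbf{Your route.} You use the same compactness, continuity of $\nu\mapsto(T\nu)(t)$, and attainment to show that $I_\theta$ is lower semicontinuous on $C[0,1]$. You then apply Fenchel--Moreau in the pairing of $C[0,1]$ with $\cM[0,1]$, and compute $I_\theta^*=\mathfrak L_\theta$ on all finite signed measures by pointwise Gibbs and Fubini. Finally you discretize to reach atomic measures.

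\textbf{What each buys.} Your route yields a slightly stronger statement: duality over every finite signed measure, with $\mathfrak L_\theta$ identified as the convex conjugate of $I_\theta$. The cost is the Riesz representation of $C[0,1]^*$ plus an approximation lemma. Your right-endpoint rounding $t\mapsto\lceil Nt\rceil/N$ handles that lemma neatly: it gives $\mu_N([s,1])=\mu((s_N,1])$ with $s_N\in[s-1/N,s)$, hence convergence at every $s$, not merely off the atoms of $\mu$. (Almost-everywhere convergence in $x$ would already suffice for dominated convergence.) The paper's route avoids infinite-dimensional conjugates and any approximation step. As a by-product it shows that each finite-dimensional G\"artner--Ellis rate $I_{\boldsymbol t}$ is itself an entropy contraction $\mathcal J_F$, which fits the projective-limit identification used in Section~\ref{sec:LDP}.
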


\begin{proof}
Fix a finite set $F=\{t_1,\ldots,t_m\}$.  Define
\begin{equation}\label{eq:JF}
 \mathcal J_F(z)
 :=\inf\left\{
 \mathcal E_\theta(\nu):
 ((T\nu)(t_1),\ldots,(T\nu)(t_m))=z
 \right\}.
\end{equation}
The compactness and continuity established in the proof of Proposition~\ref{prop:entropy-equivalence} show that $\mathcal J_F$ is lower semicontinuous, has compact level sets, and attains every finite value.  It is convex because relative entropy is convex and the constraint map is linear.

For $\lambda\in\R^m$, we use the elementary Gibbs variational formula; see, for example, Dupuis and Ellis \cite[Proposition~1.4.2]{DupuisEllis}:
\begin{equation}\label{eq:finite-gibbs}
 \log\sum_i p_i e^{u_i}=\sup_{a_i\ge0,\,\sum_i a_i=1}\bigg\{\sum_i a_i u_i-\sum_i a_i\log\frac{a_i}{p_i}\bigg\},
\end{equation}
with the usual convention that $0\log0=0$.  Applying \eqref{eq:finite-gibbs} at each fixed $x$ gives
\begin{align}
 &\log\bigg[
 q^{m(x)}+
 \sum_{k=1}^{m(x)}\pi_k
 \exp\bigg\{\sum_{j=1}^m\lambda_j
 \1_{\{xk\le t_j\}}\bigg\}
 \bigg]
 \nonumber\\
 &\quad=\sup_{a(x)}\bigg\{
 \sum_{j=1}^m\lambda_j
 \sum_{k=1}^{m(x)}a_k(x)\1_{\{xk\le t_j\}}
 \nonumber\\
 &\hspace{37mm}-
 \sum_{k=1}^{m(x)}a_k(x)\log\frac{a_k(x)}{\pi_k}
 -a_0(x)\log\frac{a_0(x)}{q^{m(x)}}
 \bigg\}.
 \label{eq:pointwise-Gibbs}
\end{align}
The optimizer is exactly \eqref{eq:tilted-reduced-kernel}, and it is measurable in $x$.  Integrating \eqref{eq:pointwise-Gibbs}, or equivalently applying the same formula to full kernels, yields
\begin{align}
 &\int_0^1\log\bigg(
 \sum_{k\ge1}\pi_k
 \exp\bigg\{\sum_{j=1}^m\lambda_j
 \1_{\{xk\le t_j\}}\bigg\}
 \bigg)\dd x
 \nonumber\\
 &\qquad=\sup_{\nu\in\mathscr K}
 \bigg\{
 \sum_{j=1}^m\lambda_j(T\nu)(t_j)
 -\mathcal E_\theta(\nu)
 \bigg\}
 \nonumber\\
 &\qquad=\sup_{z\in\R^m}
 \{\ip{\lambda}{z}-\mathcal J_F(z)\}.
 \label{eq:Gibbs-integrated}
\end{align}
Since $\mathcal J_F$ is proper, convex and lower semicontinuous, the Fenchel--Moreau theorem \cite[Theorem~12.2]{Rockafellar} gives
\begin{equation}\label{eq:finite-entropy-dual}
 \mathcal J_F(z)
 =\sup_{\lambda\in\R^m}
 \left\{
 \ip{\lambda}{z}-\Lambda_F(\lambda)
 \right\},
\end{equation}
where $\Lambda_F$ denotes the integral on the first line of \eqref{eq:Gibbs-integrated}.

Let $\widetilde I_\theta$ be the right-hand side of \eqref{eq:projective-rate}.  If $T\nu=f$, then \eqref{eq:finite-entropy-dual} implies
\[
 \widetilde I_\theta(f)\le\mathcal E_\theta(\nu).
\]
Taking the infimum gives
\begin{equation}\label{eq:one-side-entropy}
 \widetilde I_\theta(f)\le I_\theta(f).
\end{equation}

For the converse direction, assume $M:=\widetilde I_\theta(f)<\infty$ and let
\[
 K_M:=\{\nu\in\mathscr K:\mathcal E_\theta(\nu)\le M\}.
\]
For $t\in[0,1]$, set
\[
 C_t:=\{\nu\in K_M:(T\nu)(t)=f(t)\}.
\]
Each $C_t$ is closed in the compact set $K_M$.  If $F=\{t_1,\ldots,t_m\}$ is finite, then \eqref{eq:finite-entropy-dual} and the definition of $M$ give
\[
 \mathcal J_F(f(t_1),\ldots,f(t_m))\le M.
\]
Attainment in \eqref{eq:JF} therefore shows that $\bigcap_{t\in F}C_t$ is nonempty.  The finite intersection property yields
\[
 \bigcap_{t\in[0,1]}C_t\ne\varnothing.
\]
Thus some kernel $\nu$ satisfies $T\nu=f$ and $\mathcal E_\theta(\nu)\le M$, so $I_\theta(f)\le M$.  Together with \eqref{eq:one-side-entropy}, this proves \eqref{eq:projective-rate}.  Formula \eqref{eq:functional-dual-rate} is obtained by writing $\mu=\sum_{j=1}^m\lambda_j\delta_{t_j}$. Then
\[
 \sum_{j=1}^m\lambda_j\1_{\{xk\le t_j\}}
 =\mu([xk,1]).
\]
\end{proof}

\subsection{Properties of the functional LDP rate}

The entropy representation immediately implies the main structural properties of finite-rate paths.

\begin{proposition}\label{prop:LDP-rate-properties}
The rate function $I_\theta$ is convex.  If $I_\theta(f)<\infty$, then $f$ is absolutely continuous and nondecreasing, satisfies $f(0)=0$, and obeys
\begin{equation}\label{eq:path-bound}
 0\le f(t)\le t,
 \qquad 0\le t\le1.
\end{equation}
For every admissible representation $f=\mathcal T a$,
\begin{equation}\label{eq:path-derivative}
 f'(t)=\sum_{k\ge1}\frac1k a_k(t/k)
 \quad\text{for a.e. }t\in(0,1).
\end{equation}
The unique zero of $I_\theta$ is
\begin{equation}\label{eq:LLN-path}
 \bar f_\theta(t)=c_\theta t.
\end{equation}
Furthermore,
\begin{equation}\label{eq:zero-path-infinite}
 I_\theta(0)=+\infty.
\end{equation}
\end{proposition}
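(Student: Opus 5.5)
Convexity comes directly from the entropy contraction of Proposition~\ref{prop:entropy-equivalence}. The set $\cA$ is convex, the map $a\mapsto\mathcal T a$ is linear, and $\mathcal R_\theta$ is convex because it is an integral of relative entropies, each convex in the pair of arguments with the reference fixed. So if $f=\mathcal Ta$ and $g=\mathcal Tb$ have finite rate, then $\alpha f+(1-\alpha)g=\mathcal T(\alpha a+(1-\alpha)b)$. Hence $I_\theta(\alpha f+(1-\alpha)g)\le\alpha\mathcal R_\theta(a)+(1-\alpha)\mathcal R_\theta(b)$, and taking infima over representations gives convexity.

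For the path properties, fix a representation $f=\mathcal Ta$ with $a\in\cA$. In \eqref{eq:reduced-map}, substitute $x=s/k$ in the $k$-th term to get
\[
 f(t)=\int_0^t\sum_{k\ge1}\frac1k a_k(s/k)\dd s .
\]
The integrand is nonnegative, and Tonelli allows exchanging sum and integral. Consequently $f(0)=0$, $f$ is nondecreasing and absolutely continuous, and \eqref{eq:path-derivative} holds by Lebesgue differentiation. For the bound $f(t)\le t$, observe that $a_k(s/k)\le 1$ for each $k$. We also need $\sum_k\frac1k a_k(s/k)\le1$, which is not immediate since different $k$ use different $x$. Instead, bound directly:
\[
 f(t)=\int_0^1\sum_{k\ge1}\1_{\{xk\le t\}}a_k(x)\dd x\le\int_0^1\1_{\{x\le t\}}\sum_{k\ge1}a_k(x)\dd x\le t .
\]
This uses that $xk\le t$ forces $x\le t$, together with $\sum_{k\ge1}a_k\le1$.

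For the zero, the choice $a=\pi$ restricted (with $a_0=q^{m(x)}$) gives $\mathcal R_\theta=0$. Its image is $\mathcal T\pi(t)=\sum_k\pi_k t/k=c_\theta t$, so $I_\theta(\bar f_\theta)=0$. Conversely, suppose $I_\theta(f)=0$. Attainment (Proposition~\ref{prop:entropy-equivalence}) supplies a kernel $\nu$ with $\mathcal E_\theta(\nu)=0$, so $\nu_x=\pi$ for a.e.\ $x$, and therefore $f=T\pi=\bar f_\theta$. This gives uniqueness.

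The main point to handle is \eqref{eq:zero-path-infinite}. Suppose $0=\mathcal Ta$. Then \eqref{eq:path-derivative} gives $\sum_k a_k(t/k)/k=0$ a.e., so $a_k(x)=0$ for a.e.\ $x\in(0,1/k]$ and every $k$. Since $a_k$ already vanishes for $k>m(x)$, it follows that $a_k(x)=0$ for all $k\ge1$ and a.e.\ $x$, which forces $a_0(x)=1$. The entropy is then
\[
 \mathcal R_\theta(a)=\int_0^1\log q^{-m(x)}\dd x=\log(1/q)\int_0^1\floor{1/x}\dd x=+\infty ,
\]
because $\int_0^1\floor{1/x}\dd x=\sum_j\frac{j}{j(j+1)}$ diverges. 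The only representation of $0$ therefore has infinite cost, so $I_\theta(0)=+\infty$. This divergence of the invisible-mass cost near $x=0$ is the one genuinely quantitative step; everything else is bookkeeping.
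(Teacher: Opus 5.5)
Your proof is correct and follows essentially the same route as the paper. The steps match: the change of variables $t=kx$ with Tonelli gives absolute continuity and \eqref{eq:path-derivative}; the implication $xk\le t\Rightarrow x\le t$ gives $f(t)\le t$; attainment plus vanishing of relative entropy gives the unique zero; and divergence of $\log(1/q)\int_0^1 m(x)\dd x$ gives $I_\theta(0)=+\infty$. The only differences are cosmetic: you argue convexity on the reduced formula, and you eliminate the visible coordinates via \eqref{eq:path-derivative} rather than by evaluating the path at $t=1$.
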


\begin{proof}
Convexity follows directly from the entropy contraction because $\mathcal E_\theta$ is convex and $T$ is linear. It also follows from the convex dual formula \eqref{eq:functional-dual-rate}.  If $I_\theta(f)<\infty$, Proposition~\ref{prop:entropy-equivalence} provides an admissible minimizer, so write $f=\mathcal T a$.  Define a finite measure $\eta_a$ on $[0,1]$ by
\[
 \eta_a(B)
 :=\sum_{k\ge1}\int_0^{1/k}
 \1_{\{kx\in B\}}a_k(x)\dd x.
\]
For each $k$, the change of variables $t=kx$ gives
\[
 \int_0^{1/k}\1_{\{kx\in B\}}a_k(x)\dd x
 =\int_B\frac1k a_k(t/k)\dd t.
\]
All summands are nonnegative, so Tonelli's theorem yields
\[
 \eta_a(B)=\int_B\sum_{k\ge1}\frac1k a_k(t/k)\dd t.
\]
Since $f(t)=\eta_a([0,t])$, the function $f$ is absolutely continuous and \eqref{eq:path-derivative} holds.  In particular, $f(0)=0$ and $f$ is nondecreasing.  Moreover, if $xk\le t\le1$, then $x\le t$ and $k\le m(x)$.  Hence
\begin{align*}
 f(t)
 =\int_0^1\sum_{k\ge1}
 \1_{\{xk\le t\}}a_k(x)\dd x
 \le\int_0^t\sum_{k=1}^{m(x)}a_k(x)\dd x
 \le t,
\end{align*}
which proves \eqref{eq:path-bound}.

By Proposition~\ref{prop:entropy-equivalence}, the entropy infimum is attained whenever it is finite.  The integrand in \eqref{eq:reduced-entropy} is the relative entropy of the probability vector
\[
 (a_0(x),a_1(x),\ldots,a_{m(x)}(x))
\]
with respect to
\[
 (q^{m(x)},\pi_1,\ldots,\pi_{m(x)}).
\]
It is nonnegative and vanishes if and only if these two vectors agree.  Therefore zero cost requires, for almost every $x$,
\[
 a_k(x)=\pi_k,\quad1\le k\le m(x),
 \qquad
 a_0(x)=q^{m(x)}.
\]
The corresponding path is
\[
 (\mathcal T a)(t)
 =\sum_{k\ge1}\pi_k\int_0^{t/k}\dd x
 =t\sum_{k\ge1}\frac{\pi_k}{k}
 =c_\theta t.
\]
This proves both existence and uniqueness of the zero in \eqref{eq:LLN-path}.

Finally, suppose $\mathcal T a=0$.  Evaluating at $t=1$ gives
\[
 0=(\mathcal T a)(1)
 =\sum_{k\ge1}\int_0^{1/k}a_k(x)\dd x.
\]
Every term is nonnegative, so $a_k(x)=0$ for almost every $x\le1/k$ and every $k$.  Equivalently, all visible coordinates vanish and $a_0(x)=1$ for almost every $x$.  Consequently,
\[
 \mathcal R_\theta(a)
 =-\log q\int_0^1m(x)\dd x.
\]
Since $m(x)=j$ on $(1/(j+1),1/j]$,
\[
 \int_0^1m(x)\dd x
 =\sum_{j\ge1}j\left(\frac1j-\frac1{j+1}\right)
 =\sum_{j\ge1}\frac1{j+1}=+\infty.
\]
Thus every kernel generating the zero path has infinite entropy, and \eqref{eq:zero-path-infinite} follows.
\end{proof}

\begin{remark}\label{rem:D0-compactness}
Let
\begin{equation*}
 \begin{split}
 D_0:=\bigl\{f\in D[0,1]:\;& f(0)=0,\ f\text{ is nondecreasing},\\
 &0\le f(t)\le t\text{ for every }t\in[0,1]\bigr\}.
 \end{split}
\end{equation*}
Proposition~\ref{prop:LDP-rate-properties} shows that every finite-rate path belongs to $D_0$ and is, in fact, absolutely continuous.  The set $D_0$ should not, however, be used as a compact containment set for the $J_1$-topology.  In fact, bounded monotone subsets of $D[0,1]$ need not be relatively compact in $J_1$-topology, because two nearby jumps cannot merge into a single jump under a $J_1$ time change.

For a direct counterexample, let $n\ge4$ and define
\[
 f_n(t):=
 \begin{cases}
 0,&0\le t<\frac12-\frac1n,\\
 \frac14,&\frac12-\frac1n\le t<\frac12+\frac1n,\\
 \frac12,&\frac12+\frac1n\le t\le1.
 \end{cases}
\]
Then $f_n\in D_0$.  For every fixed $\delta>0$ and all sufficiently large $n$, the two jumps are separated by less than $\delta$.  The $J_1$ oscillation modulus in the compactness criterion is therefore at least $1/4$: an admissible partition whose consecutive points are more than $\delta$ apart cannot isolate both jumps.  Hence $(f_n)$ has no $J_1$-convergent subsequence.  This also explains why the proof of Theorem~\ref{thm:FLDP} uses the explicit exponential-equicontinuity estimate \eqref{eq:LDP-exponential-equicontinuity}, rather than deterministic confinement to a compact monotone class.

The finite $n$ paths nevertheless approach $D_0$ deterministically.  Put
\[
 \varepsilon_n:=\sup_{1\le m\le n}\frac{(\psi(m)-m)_+}{n}.
\]
We first show that $\varepsilon_n\to0$.  Fix $\delta\in(0,1)$.  If $m\le\delta n$, the Chebyshev bound $\psi(m)\le Cm$ gives
$(\psi(m)-m)_+/n\le C\delta$.  If $m>\delta n$, the prime number theorem gives
\[
 \sup_{\delta n<m\le n}\left|\frac{\psi(m)}m-1\right|\longrightarrow0.
\]
Letting $\delta\downarrow0$ proves the claim.  Since
\[
 X_n(t)\le\frac{\psi(\lfloor nt\rfloor)}n\le t+\varepsilon_n,
\]
the function $g_n(t):=\min\{X_n(t),t\}$ belongs to $D_0$ and satisfies
\begin{equation}\label{eq:dist-D0}
 \inf_{g\in D_0}\|X_n-g\|_\infty
 \le\|X_n-g_n\|_\infty\le\varepsilon_n\longrightarrow0.
\end{equation}
Thus $D_0$ describes the natural effective domain constraint, but its proximity to $X_n$ is not a substitute for the $J_1$ tightness argument.
\end{remark}

\subsection{Endpoint rate and its local quadratic expansion}

We give the analytic properties of the endpoint rate, including its boundary behavior and local quadratic expansion.

\begin{proposition}\label{prop:endpoint-rate}
The function $\Lambda_\theta$ in \eqref{eq:endpoint-cumulant} is finite and real analytic on $\R$, with
\begin{align}
 \Lambda_\theta'(\lambda)
 &=\sum_{j\ge1}\frac1{j(j+1)}
 \frac{(1-q^j)e^\lambda}
 {q^j+(1-q^j)e^\lambda},
 \label{eq:endpoint-first-derivative}\\
 \Lambda_\theta''(\lambda)
 &=\sum_{j\ge1}\frac1{j(j+1)}
 \frac{q^j(1-q^j)e^\lambda}
 {\bigl(q^j+(1-q^j)e^\lambda\bigr)^2}>0.
 \label{eq:endpoint-second-derivative}
\end{align}
Moreover,
\begin{equation}\label{eq:endpoint-derivative-range}
 \lim_{\lambda\to-\infty}\Lambda_\theta'(\lambda)=0,
 \qquad
 \lim_{\lambda\to\infty}\Lambda_\theta'(\lambda)=1.
\end{equation}
Consequently, the equation \eqref{eq:lambda-y-equation} has a unique solution $\lambda_y$ for every $y\in(0,1)$, the endpoint rate is strictly convex on $(0,1)$, and the formulas in Corollary~\ref{cor:endpoint-LDP} hold.  In addition,
\begin{equation}\label{eq:endpoint-local-quadratic}
 I_\theta^{\mathrm{end}}(c_\theta+u)
 =\frac{u^2}{2\sigma_\theta^2}+O(u^3),
 \qquad u\to0.
\end{equation}
\end{proposition}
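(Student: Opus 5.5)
Each summand of $\Lambda_\theta$ is $\phi_j(\lambda):=\log(q^j+(1-q^j)e^\lambda)$, the cumulant generating function of a Bernoulli$(1-q^j)$ variable, weighted by $w_j:=1/(j(j+1))$ with $\sum_j w_j=1$. The plan is to get regularity from uniform bounds on the $\phi_j$, and then to read off the statements about $I_\theta^{\mathrm{end}}$ from the Legendre duality of a strictly convex, steep, smooth function.

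\emph{Finiteness and analyticity.} I will use $|\phi_j(\lambda)|\le|\lambda|$, which holds because $q^j+(1-q^j)e^\lambda$ lies between $\min(1,e^\lambda)$ and $\max(1,e^\lambda)$. This gives finiteness. For analyticity I extend to a complex strip $|\operatorname{Im}\lambda|<\pi/4$. There $\operatorname{Re}\bigl(q^j+(1-q^j)e^\lambda\bigr)\ge\min(1,e^{\operatorname{Re}\lambda})\cos(\pi/4)>0$, so each $\phi_j$ is holomorphic on the strip and locally uniformly bounded. The series converges locally uniformly since $\sum_j w_j<\infty$, so $\Lambda_\theta$ is holomorphic on the strip and real analytic on $\R$. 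Termwise differentiation is then justified and yields \eqref{eq:endpoint-first-derivative} and \eqref{eq:endpoint-second-derivative}. Positivity of $\Lambda_\theta''$ follows because every term is strictly positive, as $0<q^j<1$.

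\emph{Limits of $\Lambda_\theta'$.} Each term of $\Lambda_\theta'$ is a fraction in $[0,1]$ that tends to $0$ as $\lambda\to-\infty$ and to $1$ as $\lambda\to\infty$. Dominated convergence with weights $w_j$, whose sum is $1$, gives \eqref{eq:endpoint-derivative-range}.

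\emph{Consequences for the Legendre transform.} Since $\Lambda_\theta'$ is a strictly increasing continuous bijection from $\R$ onto $(0,1)$, the equation \eqref{eq:lambda-y-equation} has a unique root $\lambda_y$ for each $y\in(0,1)$. By concavity of $\lambda\mapsto\lambda y-\Lambda_\theta(\lambda)$, this root is the maximizer, which gives \eqref{eq:endpoint-rate-explicit}. The inverse function theorem makes $y\mapsto\lambda_y$ analytic, and $(I_\theta^{\mathrm{end}})'(y)=\lambda_y$ with $(I_\theta^{\mathrm{end}})''(y)=1/\Lambda_\theta''(\lambda_y)>0$, so $I_\theta^{\mathrm{end}}$ is strictly convex on $(0,1)$.

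\emph{Profile formula \eqref{eq:endpoint-binary-entropy}.} Since $m(x)=j$ on $(1/(j+1),1/j]$, an interval of length $w_j$, the identity $\int_0^1u_y\,\dd x=\Lambda_\theta'(\lambda_y)=y$ is immediate. For the entropy identity, substitute $\log\bigl(u/(1-q^j)\bigr)=\lambda_y-\phi_j(\lambda_y)$ and $\log\bigl((1-u)/q^j\bigr)=-\phi_j(\lambda_y)$. The integrand then becomes $u_y\lambda_y-\phi_j(\lambda_y)$, and integrating gives $y\lambda_y-\Lambda_\theta(\lambda_y)$.

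\emph{Boundary values.} For $y>1$ (resp. $y<0$), let $\lambda\to+\infty$ (resp. $-\infty$): since $\Lambda_\theta(\lambda)\le\max(\lambda,0)$, the supremum is $+\infty$.

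At $y=1$, the function $\lambda-\Lambda_\theta(\lambda)=-\sum_j w_j\log\bigl(1-q^j+q^je^{-\lambda}\bigr)$ increases in $\lambda$. By monotone convergence it tends to $-\sum_j w_j\log(1-q^j)$, which is finite because $\log(1-q^j)\sim -q^j$.

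At $y=0$, the expression $-\Lambda_\theta(\lambda)=-\sum_j w_j\log\bigl(q^j+(1-q^j)e^\lambda\bigr)$ increases as $\lambda\to-\infty$ to $-\sum_j w_j\,j\log q=\log(1/q)\sum_j 1/(j+1)=+\infty$, by monotone convergence. This is the same divergence seen in \eqref{eq:zero-path-infinite}.

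\emph{Zero and local quadratic expansion.} Since $\Lambda_\theta(0)=0$ and $I_\theta^{\mathrm{end}}\ge0$, the value $I_\theta^{\mathrm{end}}(y)$ vanishes exactly when $\lambda_y=0$, i.e.\ at $y=\Lambda_\theta'(0)=\sum_j w_j(1-q^j)$. I need to check this equals $c_\theta$. Write $1-q^j=\sum_{k\le j}\pi_k$ and swap sums, using $\sum_{j\ge k}w_j=1/k$; this gives $\sum_k\pi_k/k=c_\theta$. Likewise $\Lambda_\theta''(0)=\sum_j w_j q^j(1-q^j)=\sigma_\theta^2$ by \eqref{eq:sigma-theta}. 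Since $I_\theta^{\mathrm{end}}$ is analytic near $c_\theta$ with value $0$, derivative $\lambda_{c_\theta}=0$, and second derivative $1/\sigma_\theta^2$, Taylor's theorem gives \eqref{eq:endpoint-local-quadratic}.

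The main obstacle is the justification of analyticity, namely choosing a strip on which the logarithms are uniformly controlled. Everything after that is routine convex duality.
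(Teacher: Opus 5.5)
Your proposal is correct and follows essentially the same route as the paper: termwise Bernoulli cumulants with weights $1/(j(j+1))$ summing to one, dominated convergence for the limiting slopes, and Legendre duality through the strictly increasing bijection $\Lambda_\theta'\colon\R\to(0,1)$. The remaining steps also match the paper: direct substitution for the binary-entropy profile, the exchange of sums giving $\Lambda_\theta'(0)=c_\theta$, and an inverse-function/Taylor expansion at $c_\theta$.

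The differences are only in detail. Your holomorphic extension to the strip $|\operatorname{Im}\lambda|<\pi/4$ actually justifies the ``real analytic'' claim, whereas the paper's uniform bounds on each fixed-order derivative directly give only $C^\infty$. At $y=0$ and $y=1$ you use monotone convergence, where the paper uses an explicit truncation bound and dominated convergence.
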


\begin{proof}
For each $j$, the summand in \eqref{eq:endpoint-cumulant} is $1/[j(j+1)]$ times the logarithmic moment generating function of a Bernoulli random variable with success probability $1-q^j$.  On every compact set of $\lambda$-values, derivatives of any fixed order are uniformly bounded in $j$.  Since
\[
 \sum_{j\ge1}\frac1{j(j+1)}=1,
\]
termwise differentiation is justified by the Weierstrass test, and \eqref{eq:endpoint-first-derivative}--\eqref{eq:endpoint-second-derivative} follow.  The strict positivity in \eqref{eq:endpoint-second-derivative} proves strict convexity of $\Lambda_\theta$.  Dominated convergence gives \eqref{eq:endpoint-derivative-range}.  Hence $\Lambda_\theta'$ is a strictly increasing bijection from $\R$ onto $(0,1)$, so the Legendre transform is attained at the unique $\lambda_y$ when $0<y<1$.  Its second derivative on this interval is
\[
 \bigl(I_\theta^{\mathrm{end}}\bigr)''(y)
 =\frac1{\Lambda_\theta''(\lambda_y)}>0,
\]
which proves strict convexity of the rate.  Formula
\eqref{eq:endpoint-first-derivative} shows that the function $u_y$ in
\eqref{eq:endpoint-optimal-profile} satisfies $\int_0^1u_y(x)\dd x=y$.
For $r(x):=1-q^{m(x)}$, direct substitution of
$u_y(x)=r(x)e^{\lambda_y}/(1-r(x)+r(x)e^{\lambda_y})$ gives
\begin{align*}
 &u_y(x)\log\frac{u_y(x)}{r(x)}
 +(1-u_y(x))\log\frac{1-u_y(x)}{1-r(x)}\\
 &\hspace{35mm}=\lambda_yu_y(x)
 -\log\bigl(1-r(x)+r(x)e^{\lambda_y}\bigr).
\end{align*}
Integration proves \eqref{eq:endpoint-binary-entropy}.

For $y=1$, rewrite
\[
 \Lambda_\theta(\lambda)-\lambda
 =\sum_{j\ge1}\frac1{j(j+1)}
 \log\left((1-q^j)+q^je^{-\lambda}\right).
\]
The summands are bounded above by zero and, for $\lambda\ge0$, bounded below by
$\log(1-q^j)/[j(j+1)]$.  The latter series is summable because
$-\log(1-q^j)\sim q^j$.  Dominated convergence therefore yields
\[
 \lim_{\lambda\to\infty}
 \bigl(\Lambda_\theta(\lambda)-\lambda\bigr)
 =\sum_{j\ge1}\frac1{j(j+1)}\log(1-q^j),
\]
which proves the value at $y=1$.

To determine the value at zero, fix $J\ge1$ and take
$\lambda\le J\log q<0$.  Then $e^\lambda\le q^J\le q^j$ for $1\le j\le J$, and hence
\[
 q^j+(1-q^j)e^\lambda\le2q^j,
 \qquad 1\le j\le J.
\]
For $j>J$ the logarithm in \eqref{eq:endpoint-cumulant} is nonpositive.  It follows that
\[
 \Lambda_\theta(\lambda)
 \le \log2\sum_{j=1}^J\frac1{j(j+1)}
 +\log q\sum_{j=1}^J\frac1{j+1}.
\]
The right-hand side tends to $-\infty$ as $J\to\infty$.  Therefore
$\Lambda_\theta(\lambda)\to-\infty$ as $\lambda\to-\infty$, and
\[
 I_\theta^{\mathrm{end}}(0)
 =\sup_{\lambda\in\R}\{-\Lambda_\theta(\lambda)\}=+\infty.
\]
The asymptotic slopes in \eqref{eq:endpoint-derivative-range} imply that the Legendre transform is infinite outside $[0,1]$.

At the origin,
\begin{align}
 \Lambda_\theta'(0)
 &=\sum_{j\ge1}\frac{1-q^j}{j(j+1)}
 =\sum_{k\ge1}\pi_k\sum_{j\ge k}\frac1{j(j+1)}
 =\sum_{k\ge1}\frac{\pi_k}{k}
 =c_\theta,
 \label{eq:endpoint-mean}\\
 \Lambda_\theta''(0)
 &=\sum_{j\ge1}\frac{q^j(1-q^j)}{j(j+1)}
 =\sigma_\theta^2.
 \label{eq:endpoint-variance}
\end{align}
The third derivative of $\Lambda_\theta$ is bounded on a neighborhood of zero.  By the inverse function theorem,
\[
 \lambda_{c_\theta+u}
 =\frac{u}{\sigma_\theta^2}+O(u^2).
\]
Substitution into
$I_\theta^{\mathrm{end}}(c_\theta+u)
=(c_\theta+u)\lambda_{c_\theta+u}
-\Lambda_\theta(\lambda_{c_\theta+u})$
gives \eqref{eq:endpoint-local-quadratic}.
\end{proof}

\subsection{A variational form of the MDP rate}

Let $\mathscr L_\theta$ be the space of equivalence classes of measurable arrays $h=(h_k(x))_{k\ge1}$ satisfying
\begin{equation}\label{eq:Ltheta-space}
 \sum_{k\ge1}h_k(x)=0 \quad\text{for a.e. }x,
 \qquad
 \norm{h}_{\mathscr L_\theta}^2
 :=\int_0^1\sum_{k\ge1}\frac{h^2_k(x)}{\pi_k}\dd x<\infty.
\end{equation}
Indeed, $\mathscr L_\theta$ is a closed subspace of the ambient weighted $L^2$ space.  To see this, note that
\begin{equation}\label{eq:Ltheta-l1-control}
 \int_0^1\sum_{k\ge1}|h_k(x)|\dd x
 \le
 \int_0^1\left(\sum_{k\ge1}\frac{h^2_k(x)}{\pi_k}\right)^{1/2}\dd x
 \le \norm{h}_{\mathscr L_\theta}.
\end{equation}
Hence $h\mapsto \sum_{k\ge1}h_k(\cdot)$ is a continuous map from the ambient weighted $L^2$ space into $L^1[0,1]$, and its kernel is closed.  Formula \eqref{eq:Ltheta-l1-control} also shows that, for every $t\in[0,1]$, the integral
\begin{equation}\label{eq:A-operator}
 (Ah)(t) := \int_0^1\sum_{k\ge1}\1_{\{xk\le t\}}h_k(x)\dd x
\end{equation}
is absolutely convergent and defines a continuous linear functional of $h$.

The following Hilbert space contraction identifies the quadratic MDP rate with the RKHS norm.

\begin{proposition}\label{prop:quadratic-contraction}
For every $f\in C[0,1]$,
\begin{equation}\label{eq:quadratic-contraction}
 J_\theta(f)
 =\frac12\inf\left\{
 \norm{h}_{\mathscr L_\theta}^2:
 h\in\mathscr L_\theta,\ Ah=f
 \right\}.
\end{equation}
\end{proposition}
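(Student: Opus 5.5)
The plan is to recognize \eqref{eq:quadratic-contraction} as the standard description of an RKHS as the range of a bounded operator from a Hilbert space, with the quotient norm. Throughout, the only tool needed is Hilbert space duality; the main work is identifying the adjoint of $A$.

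\emph{Step 1: the adjoint of the evaluations.} For fixed $t$, $h\mapsto (Ah)(t)$ is a continuous linear functional on $\mathscr L_\theta$ by \eqref{eq:Ltheta-l1-control}. I will compute its Riesz representer $e_t\in\mathscr L_\theta$. The functional is
\[
 \int_0^1\sum_k \1_{\{xk\le t\}}h_k(x)\dd x
 =\ip{g^t}{h},\qquad g^t_k(x):=\pi_k\1_{\{xk\le t\}},
\]
in the weighted inner product $\ip{g}{h}=\int\sum_k g_kh_k/\pi_k\dd x$. The array $g^t$ lies in the ambient space but not in $\mathscr L_\theta$, so we project it onto the constraint subspace. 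The orthogonal complement of $\mathscr L_\theta$ consists of arrays $c(x)\pi_k$, since $\ip{c\pi}{h}=\int c\sum_kh_k\dd x=0$. Hence
\[
 e^t_k(x)=\pi_k\bigl(\1_{\{xk\le t\}}-P(xG\le t)\bigr),
\]
which indeed sums to zero in $k$. It follows that
\[
 \ip{e^s}{e^t}=\int_0^1\sum_k\pi_k\bigl(\1_{\{xk\le s\}}-P_s\bigr)\bigl(\1_{\{xk\le t\}}-P_t\bigr)\dd x=C_\theta(s,t)
\]
by \eqref{eq:cov-integral}.

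\emph{Step 2: upper bound on $J_\theta$.} Suppose $Ah=f$. Then for any $t_i$ and $a_i$,
\[
 \sum_i a_if(t_i)-\tfrac12\sum_{i,j}a_ia_jC_\theta(t_i,t_j)
 =\ip{h}{v}-\tfrac12\norm{v}^2\le\tfrac12\norm{h}^2,
 \qquad v:=\sum_i a_ie^{t_i}.
\]
Taking the supremum over $t_i,a_i$ and the infimum over $h$ gives $J_\theta(f)\le$ the right-hand side of \eqref{eq:quadratic-contraction}.

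\emph{Step 3: lower bound.} Suppose $J_\theta(f)=M<\infty$. Let $V$ be the closed span of $\{e^t\}$ in $\mathscr L_\theta$. Define $\ell$ on finite combinations by $\ell\bigl(\sum a_ie^{t_i}\bigr):=\sum a_if(t_i)$.

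The key point, which is the main delicate step, is that $\ell$ is well defined and bounded. The supremum defining $M$ gives, after scaling $a\mapsto ra$ and optimizing in $r$,
\[
 \Bigl|\sum a_if(t_i)\Bigr|\le\sqrt{2M}\,\Bigl\|\sum a_ie^{t_i}\Bigr\|.
\]
This bound shows both that $\ell$ is well defined (combinations of zero norm have zero value) and that it is bounded. Extend $\ell$ to $V$ by continuity and take its Riesz representer $h\in V$, so $\norm{h}^2\le2M$. Then $(Ah)(t)=\ip{h}{e^t}=f(t)$ for every $t$, and the infimum is at most $2M$.

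If instead $J_\theta(f)=\infty$, the inequality from Step 2 shows that no $h$ with $Ah=f$ exists, so both sides are infinite.

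Finally, to connect with the definition via the RKHS norm, I will rely on the stated variational formula for $\frac12\norm{g}_{\cH_\theta}^2$. This makes $J_\theta$ equal to exactly the supremum used above, so no separate identification of $\cH_\theta$ is needed.
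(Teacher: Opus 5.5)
Your proof is correct, but it runs on the dual side, whereas the paper argues on the primal side.

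The paper writes down the same $e_t$ and checks $\ip{e_s}{e_t}_{\mathscr L_\theta}=C_\theta(s,t)$ and $(Ah)(t)=\ip{h}{e_t}_{\mathscr L_\theta}$. It then shows that $A$ is injective on the closed span $\mathscr L_0$ of $\{e_t\}$. Its range, with the transported norm $\norm{Ah}_A:=\norm{h}_{\mathscr L_\theta}$, contains the kernel sections and has the reproducing property, so it equals $\cH_\theta$ by uniqueness of the RKHS with kernel $C_\theta$. Projecting an arbitrary $h$ onto $\mathscr L_0$ then gives the quotient-norm formula.

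You instead take the supremum formula for $\tfrac12\norm{f}_{\cH_\theta}^2$ quoted in Section~\ref{sec:main} as the working definition of $J_\theta$. You get one inequality by completing the square, and the other by Riesz-representing the functional $\ell$ on $V$. Your scaling argument correctly handles zero-norm combinations, so $\ell$ is well defined. The representer lies in $V\subset\mathscr L_\theta$ and satisfies $Ah=f$ pointwise.

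What your route buys:
\begin{itemize}
\item It never has to identify an abstract range space with $\cH_\theta$.
\item It links the contraction directly to the projective rate \eqref{eq:MDP-projective-rate} that the MDP proof actually produces.
\end{itemize}
Its cost is reliance on the quoted variational formula for the RKHS norm. That formula's proof is essentially your Step~3 argument moved to $\cH_\theta$; the paper carries it out with $L_f$ in Section~\ref{sec:MDP}. In exchange, the paper's route exhibits the explicit isometry $A:\mathscr L_0\to\cH_\theta$.

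A minor point on Step~1: you only need that arrays $c(x)\pi_k$ with $c\in L^2$ are orthogonal to $\mathscr L_\theta$ and that $e^t\in\mathscr L_\theta$. You do not need that such arrays exhaust the orthogonal complement. Your projection derivation also explains where the formula for $e_t$ comes from, which the paper simply states.
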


\begin{proof}
The preceding paragraph already gives the absolute convergence needed for $A$.  The continuity of the functions $Ah$ follows in the proof from the continuity of $t\mapsto e_t$ in $\mathscr L_\theta$.

For $t\in[0,1]$, define
\begin{equation}\label{eq:e-t}
 e_t(x,k)
 :=\pi_k\left(
 \1_{\{xk\le t\}}
 -\sum_{l\ge1}\pi_l\1_{\{xl\le t\}}
 \right).
\end{equation}
Then $e_t\in\mathscr L_\theta$ and $\sum_ke_t(x,k)=0$.  Moreover,
\[
 \norm{e_t-e_s}_{\mathscr L_\theta}^2
 =C_\theta(t,t)+C_\theta(s,s)-2C_\theta(s,t),
\]
which tends to zero as $t\to s$ by dominated convergence in
\eqref{eq:cov-integral}.  A direct calculation gives
\begin{align*}
 \ip{e_s}{e_t}_{\mathscr L_\theta}
 =\int_0^1
 \Cov\bigl(\1_{\{xG\le s\}},\1_{\{xG\le t\}}\bigr)\dd x=C_\theta(s,t).
\end{align*}
Furthermore, since $\sum_kh_k(x)=0$,
\begin{align*}
 \ip{h}{e_t}_{\mathscr L_\theta}
 &=\int_0^1\sum_{k\ge1}h_k(x)
 \1_{\{xk\le t\}}\dd x
 =(Ah)(t).
\end{align*}

Let $\mathscr L_0$ be the closed linear span of $\{e_t:0\le t\le1\}$ in
$\mathscr L_\theta$.  The function $Ah$ depends only on the orthogonal projection of $h$ onto $\mathscr L_0$.  The map
\[
 A:\mathscr L_0\longrightarrow\R^{[0,1]}
\]
is injective, because $Ah=0$ implies $\ip{h}{e_t}=0$ for every $t$ and hence $h=0$ in $\mathscr L_0$.  Endow its range with the transported Hilbert norm
\[
 \norm{Ah}_{A}:=\norm{h}_{\mathscr L_\theta},
 \qquad h\in\mathscr L_0.
\]
For every $t$,
\[
 A e_t(s)=\ip{e_t}{e_s}_{\mathscr L_\theta}=C_\theta(s,t),
\]
so the range contains all kernel sections.  Moreover,
\[
 (Ah)(t)=\ip{h}{e_t}_{\mathscr L_\theta}
 =\ip{Ah}{C_\theta(\cdot,t)}_{A}.
\]
Therefore the range of $A$, with the norm $\norm{\cdot}_A$, is exactly the reproducing kernel Hilbert space $\cH_\theta$ associated with $C_\theta$.

For arbitrary $h\in\mathscr L_\theta$, replacing $h$ by its projection onto
$\mathscr L_0$ leaves $Ah$ unchanged and can only decrease the norm.  Hence
\[
 \norm{f}_{\cH_\theta}^2
 =\inf_{Ah=f}\norm{h}_{\mathscr L_\theta}^2,
\]
with the convention that the infimum is infinite if $f$ is not in the range.  Combining this identity with \eqref{eq:MDP-rate} proves \eqref{eq:quadratic-contraction}.
\end{proof}

\section{Arithmetic reduction and geometric marks}\label{sec:reduction}

Let $\Pp$ be the set of prime numbers.  The von Mangoldt function is
$\Lambda(p^r)=\log p$ for prime powers $p^r$ with $r\ge1$ and
$\Lambda(m)=0$ otherwise.  See \cite[Chapter~4]{Apostol}.  We write
\[
 \vartheta(x):=\sum_{p\in\Pp,\;p\le x}\log p,
 \qquad
 \psi(x):=\sum_{m\le x}\Lambda(m),
\]
for the first and second Chebyshev functions, respectively.
For a finite set $A\subset\N$, put
\[
 I_A(m):=\1_{\{A\cap m\N\ne\varnothing\}}.
\]
Then
\begin{equation}\label{eq:von-Mangoldt-LCM}
 \log\lcm(A)=\sum_{m\ge1}\Lambda(m)I_A(m),
\end{equation}
an elementary identity also stated in \cite[Lemma~2.1]{AKM}.  We use the classical Chebyshev bound $\vartheta(x)\le Cx$ and the prime number theorem throughout. Standard references include \cite[Chapter~4]{Apostol}.

Define the large prime process
\begin{equation}\label{eq:Sn}
 S_n(t):=\sum_{\substack{p\in\Pp\\\sqrt n<p\le n}}
 \log p\,I_{A_{\floor{nt}}}(p),
 \qquad 0\le t\le1,
\end{equation}
which is the large prime approximation to $\log L_{\floor{nt}}$.
The basic arithmetic approximation is the following deterministic uniform estimate.

\begin{lemma}\label{lem:deterministic-approximation}
There is a constant $C<\infty$ such that, for all sufficiently large $n$,
\begin{equation}\label{eq:deterministic-approximation}
 \sup_{0\le t\le1}
 \abs{\log L_{\floor{nt}}-S_n(t)}
 \le C\sqrt n.
\end{equation}
\end{lemma}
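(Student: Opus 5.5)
Apply the von Mangoldt identity \eqref{eq:von-Mangoldt-LCM} with $A=A_{\floor{nt}}$ for each fixed $t$. Since $A_{\floor{nt}}\subset\{1,\ldots,\floor{nt}\}$, we have $I_A(m)=0$ for $m>\floor{nt}$, and in particular for $m>n$. Hence
\[
 \log L_{\floor{nt}}-S_n(t)
 =\sum_{\substack{p\le\sqrt n}}\log p\,I_{A_{\floor{nt}}}(p)
 +\sum_{\substack{p^r\le n,\ r\ge2}}\log p\,I_{A_{\floor{nt}}}(p^r).
\]
Both sums are nonnegative, and the indicators are bounded by one. The plan is therefore to bound each sum by a deterministic quantity that does not depend on $t$ or $\omega$. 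This makes the supremum over $t$ automatic.

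The first sum is at most $\vartheta(\sqrt n)\le C\sqrt n$ by the Chebyshev bound. The second sum is at most
\[
 \sum_{r\ge2}\vartheta(n^{1/r})=\psi(n)-\vartheta(n).
\]
Here the $r=2$ term is at most $C\sqrt n$. For $r\ge3$, the terms vanish once $n^{1/r}<2$, that is, for $r>\log_2 n$. Each remaining term is at most $Cn^{1/3}$, so together they contribute at most $Cn^{1/3}\log_2 n=o(\sqrt n)$.

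Adding the two bounds gives $0\le\log L_{\floor{nt}}-S_n(t)\le C'\sqrt n$ uniformly in $t\in[0,1]$ for all large $n$. This is \eqref{eq:deterministic-approximation}.

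There is no genuine obstacle. The only point needing care is the prime-power sum, where the $r\ge3$ terms must be bounded by counting the number of relevant $r$, or alternatively by using the classical estimate $\psi(x)-\vartheta(x)=O(\sqrt x)$ directly.
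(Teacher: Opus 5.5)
Your proof is correct and matches the paper's argument essentially step for step. The von Mangoldt identity reduces the nonnegative difference to the primes $p\le\sqrt n$ and the higher prime powers. This difference is then bounded uniformly in $t$ and $\omega$ by $\vartheta(\sqrt n)+\sum_{2\le r\le\log_2 n}\vartheta(n^{1/r})\le 2C_0\sqrt n+C_0n^{1/3}\log_2 n$ using the Chebyshev bound.
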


\begin{proof}
By \eqref{eq:von-Mangoldt-LCM}, the difference in \eqref{eq:deterministic-approximation} is nonnegative.  Uniformly in $t$ it is bounded by the total possible contribution of primes $p\le\sqrt n$ and of prime powers $p^r$ with $r\ge2$:
\begin{align*}
 0\le\log L_{\floor{nt}}-S_n(t)
 &\le\sum_{p\le\sqrt n}\log p
 +\sum_{r\ge2}\sum_{p^r\le n}\log p\\
 &=\vartheta(\sqrt n)
 +\sum_{2\le r\le\log_2n}\vartheta(n^{1/r}).
\end{align*}
The Chebyshev bound $\vartheta(x)\le C_0x$ yields
\begin{align*}
 \vartheta(\sqrt n)
 +\sum_{2\le r\le\log_2n}\vartheta(n^{1/r})
 &\le2C_0\sqrt n
 +C_0n^{1/3}\log_2n\\
 &\le C\sqrt n
\end{align*}
for all sufficiently large $n$.
\end{proof}

For a prime $p$ with $\sqrt n<p\le n$, the indicator process $I_{A_{\floor{nt}}}(p)$ depends only on the Bernoulli variables
\[
 \xi_p,\xi_{2p},\ldots,\xi_{\floor{n/p}p}.
\]
If $p$ and $q$ are distinct primes exceeding $\sqrt n$, these two finite blocks are disjoint: a common index not exceeding $n$ would be divisible by $pq>n$.  Hence the blocks indexed by such primes are mutually independent.  To avoid any overlap beyond the observation range, enlarge the probability space as follows.  For every prime $p$ with $\sqrt n<p\le n$, let
$(\zeta_{n,p,k})_{k\ge1}$ be a Bernoulli sequence such that
\[
 \zeta_{n,p,k}=\xi_{kp},
 \qquad 1\le k\le\floor{n/p},
\]
and extend this finite block by fresh Bernoulli variables of parameter $\theta$.  The extensions are chosen independently for different primes and independently of the original variables.  Put
\begin{equation}\label{eq:geometric-mark}
 G_{n,p}:=\min\{k\ge1:\zeta_{n,p,k}=1\}.
\end{equation}
The enlargement does not change any variable determined by indices at most $n$.  The marks $(G_{n,p})_{\sqrt n<p\le n}$ are independent and satisfy
$\Prob(G_{n,p}=k)=\pi_k$.  Since $pG_{n,p}$ is integer valued,
\begin{equation}\label{eq:indicator-geometric}
 I_{A_{\floor{nt}}}(p)
 =\1_{\{pG_{n,p}\le\floor{nt}\}}
 =\1_{\{pG_{n,p}\le nt\}}.
\end{equation}
Thus
\begin{equation}\label{eq:Sn-geometric}
 S_n(t)=\sum_{\sqrt n<p\le n}\log p\,
 \1_{\{(p/n)G_{n,p}\le t\}}.
\end{equation}

Set
\begin{equation}\label{eq:x-r-v}
 x_{n,p}:=\frac pn,
 \qquad
 r_{n,p}:=\frac{\log p}{\log n},
 \qquad
 v_n:=\frac n{\log n},
\end{equation}
and define
\begin{equation}\label{eq:Wn}
 W_n(t):=\frac{S_n(t)}n
 =\frac1{v_n}\sum_{\sqrt n<p\le n}
 r_{n,p}\1_{\{x_{n,p}G_{n,p}\le t\}}.
\end{equation}
Lemma~\ref{lem:deterministic-approximation} gives
\begin{equation}\label{eq:LDP-deterministic-equivalence}
 \norm{X_n-W_n}_\infty\le\frac C{\sqrt n}.
\end{equation}
After centering,
\begin{align}
 \sup_{0\le t\le1}
 \bigg|Y_n(t)
 -\frac{S_n(t)-\E S_n(t)}
 {b_n\sqrt{n\log n}}\bigg|
 &\le\frac{2C}{b_n\sqrt{\log n}}.
 \label{eq:MDP-deterministic-equivalence}
\end{align}

Finally, since
\[
 0\le\log L_m-\log L_{m-1}\le\log m\le\log n,
\]
the polygonal interpolations satisfy
\begin{align}
 \norm{X_n-\widehat X_n}_\infty
 &\le\frac{\log n}{n},
 \label{eq:LDP-interpolation}\\
 \norm{Y_n-\widehat Y_n}_\infty
 &\le\frac{2\log n}{b_n\sqrt{n\log n}}
 =\frac2{b_n}\sqrt{\frac{\log n}{n}}.
 \label{eq:MDP-interpolation}
\end{align}
All these errors converge to zero deterministically and hence are exponentially negligible at the corresponding speeds.

\section{Prime Riemann sums and covariance estimates}\label{sec:prime}

Let $\pi(x):=\#\{p\in\Pp: p\le x\}$ be the prime counting function.  We use the prime number theorem in the form
\begin{equation}\label{eq:PNT}
 \pi(x)\sim\frac{x}{\log x},
 \qquad x\to\infty.
\end{equation}

The large prime sums will repeatedly be converted into integrals by the next Riemann sum estimate.

\begin{lemma}\label{lem:prime-Riemann}
Let $h:[0,1]\to\R$ be bounded and Riemann integrable.  Then
\begin{equation}\label{eq:prime-Riemann-basic}
 \frac1{v_n}\sum_{\sqrt n<p\le n}h(p/n)
 \longrightarrow\int_0^1h(x)\dd x.
\end{equation}
If $H:[0,1]\times[0,1]\to\R$ is bounded, is Riemann integrable in its first variable after setting the second variable equal to $1$, and is uniformly continuous in the second variable on $[\delta,1]\times[0,1]$ for every $\delta>0$, then
\begin{equation}\label{eq:prime-Riemann-two}
 \frac1{v_n}\sum_{\sqrt n<p\le n}
 H\left(\frac pn,\frac{\log p}{\log n}\right)
 \longrightarrow\int_0^1H(x,1)\dd x.
\end{equation}
In particular, for every fixed $r\ge0$,
\begin{equation}\label{eq:prime-Riemann-weighted}
 \frac1{v_n}\sum_{\sqrt n<p\le n}
 r_{n,p}^{\,r}h(p/n)
 \longrightarrow\int_0^1h(x)\dd x.
\end{equation}
\end{lemma}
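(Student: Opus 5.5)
The first claim \eqref{eq:prime-Riemann-basic} will follow from the prime number theorem \eqref{eq:PNT} through the primes in short intervals of fixed relative length. Fix $0<a<b\le 1$. By \eqref{eq:PNT},
\[
 \#\{p: an<p\le bn\}=\pi(bn)-\pi(an)\sim\frac{(b-a)n}{\log n},
\]
because $\log(bn)\sim\log(an)\sim\log n$. Hence the counting measures
\[
 \mu_n:=\frac1{v_n}\sum_{\sqrt n<p\le n}\delta_{p/n}
\]
satisfy $\mu_n((a,b])\to b-a$ for every fixed $0<a<b\le1$. Near the origin, Chebyshev's bound $\pi(x)\le Cx/\log x$ gives
\[
 \mu_n((0,\delta])\le\frac{\pi(\delta n)\log n}{n}\le C'\delta
\]
uniformly in large $n$, since $\log(\delta n)\ge\frac12\log n$ for large $n$. (The cutoff $p>\sqrt n$ only removes mass from this same region.) It follows that the total mass of $\mu_n$ tends to $1$, and $\mu_n((a,b])\to b-a$ for all $0\le a<b\le 1$.

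Given this interval convergence, I will pass to Riemann integrable $h$ using step functions. Fix $\varepsilon>0$. Riemann integrability yields step functions $\phi\le h\le\Phi$ that are constant on the cells of a finite partition of $[0,1]$ and satisfy $\int(\Phi-\phi)<\varepsilon$. Interval convergence gives $\int\phi\,d\mu_n\to\int\phi$ and $\int\Phi\,d\mu_n\to\int\Phi$. Sandwiching $\int h\,d\mu_n$ between these two limits proves \eqref{eq:prime-Riemann-basic}. Equivalently, $\mu_n\Rightarrow\mathrm{Leb}$, and the set of discontinuities of $h$ is Lebesgue-null, so the portmanteau argument applies. Boundedness of $h$ is what makes the sandwich work.

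For \eqref{eq:prime-Riemann-two}, I will split the sum at $p=\delta n$. The part with $p\le\delta n$ is bounded by $\|H\|_\infty\mu_n((0,\delta])\le C'\|H\|_\infty\delta$. On the part with $p>\delta n$, we have
\[
 r_{n,p}=\frac{\log p}{\log n}\ge1+\frac{\log\delta}{\log n},
\]
so
\[
 \sup_{\delta n<p\le n}|r_{n,p}-1|\le\frac{\log(1/\delta)}{\log n}\to0.
\]
Uniform continuity of $H$ in the second variable on $[\delta,1]\times[0,1]$ then gives, uniformly in $p$,
\[
 \Bigl|H\bigl(\tfrac pn,r_{n,p}\bigr)-H\bigl(\tfrac pn,1\bigr)\Bigr|\le\omega_\delta\Bigl(\frac{\log(1/\delta)}{\log n}\Bigr)\to0.
\]
Since $\mu_n$ has bounded mass, this error tends to zero. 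What remains is $\frac1{v_n}\sum_{\sqrt n<p\le n}H(p/n,1)$, which converges to $\int_0^1H(x,1)\,dx$ by the first claim. The correction from restoring the region $p\le\delta n$ in this sum is again $O(\delta)$. Letting $n\to\infty$ and then $\delta\downarrow0$ proves \eqref{eq:prime-Riemann-two}.

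Finally, \eqref{eq:prime-Riemann-weighted} is the special case $H(x,r)=r^{\,r_0}h(x)$ (renaming the fixed exponent $r_0$ to avoid a clash with $r_{n,p}$). On $[0,1]$ the factor $r^{r_0}$ is uniformly continuous and bounded, so the hypotheses hold. The only delicate point in the whole argument is the uniform $O(\delta)$ control of small $x$, which Chebyshev's bound supplies. I do not expect a genuine obstacle; the step needing the most care is correctly handling the $p\le\delta n$ region, where $r_{n,p}$ is not close to $1$.
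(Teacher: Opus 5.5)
Your proposal is correct and follows essentially the same route as the paper: interval convergence $\frac1{v_n}\#\{p: an<p\le bn\}\to b-a$ from the prime number theorem, an upper and lower step-function sandwich for Riemann integrable $h$, an $O(\delta)$ bound on the region $p\le\delta n$, and the uniform convergence $r_{n,p}\to1$ for $p>\delta n$ combined with uniform continuity in the second variable, with \eqref{eq:prime-Riemann-weighted} obtained from $H(x,u)=u^{r}h(x)$. The only difference is cosmetic: you control the small-$x$ region with Chebyshev's bound, where the paper uses the prime number theorem to get a $\limsup$ of at most $\|h\|_\infty\delta$.
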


\begin{proof}
For $0<a<b\le1$, the prime number theorem gives
\begin{align*}
 \frac1{v_n}\#\{p:na<p\le nb\}
 &=\frac{\log n}{n}\bigl(\pi(nb)-\pi(na)\bigr)\\
 &\longrightarrow b-a.
\end{align*}
Hence \eqref{eq:prime-Riemann-basic} holds for step functions supported in $[\delta,1]$.  Upper and lower step approximations prove it for Riemann integrable functions on $[\delta,1]$.  The contribution of $(0,\delta]$ is bounded by
\[
 \norm{h}_\infty\frac{\log n}{n}\pi(n\delta),
\]
whose limsup is at most $\norm{h}_\infty\delta$.  Letting $\delta\downarrow0$ proves \eqref{eq:prime-Riemann-basic}.

Uniformly for $p/n\in[\delta,1]$,
\[
 \frac{\log p}{\log n}
 =1+\frac{\log(p/n)}{\log n}
 \longrightarrow1.
\]
The asserted uniform continuity in the second variable therefore reduces \eqref{eq:prime-Riemann-two} on $[\delta,1]$ to \eqref{eq:prime-Riemann-basic}.  Boundedness controls the interval $(0,\delta]$, and then $\delta\downarrow0$.  Formula \eqref{eq:prime-Riemann-weighted} is obtained by taking $H(x,u)=u^rh(x)$.
\end{proof}

The preceding Riemann sum estimate gives the following mean and covariance limits.

\begin{lemma}\label{lem:mean-covariance}
For $0\le s\le t\le1$,
\begin{align}
 \E[W_n(t)-W_n(s)]
 &\longrightarrow
 \int_0^1\Prob(s<xG\le t)\dd x
 =c_\theta(t-s).
 \label{eq:mean-limit}
\end{align}
For $0\le s,t\le1$,
\begin{align}\label{eq:covariance-sum-limit}
 &\frac1{n\log n}
 \sum_{\sqrt n<p\le n}(\log p)^2
 \Cov\Big(
 \1_{\{x_{n,p}G_{n,p}\le s\}},
 \1_{\{x_{n,p}G_{n,p}\le t\}}
 \Big)
\longrightarrow C_\theta(s,t).
\end{align}
\end{lemma}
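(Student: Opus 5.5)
Both limits reduce to Lemma~\ref{lem:prime-Riemann} once the summands are written as explicit functions of $x_{n,p}=p/n$ and $r_{n,p}$. The geometric marks $G_{n,p}$ have the law $(\pi_k)$ regardless of $p$.

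\textbf{Mean limit.} By \eqref{eq:Wn},
\[
 \E[W_n(t)-W_n(s)]=\frac1{v_n}\sum_{\sqrt n<p\le n} r_{n,p}\,h_{s,t}(p/n),
 \qquad h_{s,t}(x):=\Prob(s<xG\le t)=\sum_{k\ge1}\pi_k\1_{\{s<xk\le t\}}.
\]
The function $h_{s,t}$ is bounded by $1$. On each $[\delta,1]$ it is a finite sum of indicators of intervals, since only $k\le 1/\delta$ contribute. Hence it is Riemann integrable on $[\delta,1]$, and the proof of Lemma~\ref{lem:prime-Riemann} (which treats $(0,\delta]$ by boundedness alone) applies. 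Then \eqref{eq:prime-Riemann-weighted} with $r=1$ gives the limit $\int_0^1 h_{s,t}(x)\dd x$.

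To evaluate the integral, use Tonelli:
\[
 \int_0^1 h_{s,t}(x)\dd x=\sum_k \pi_k\bigl(\tfrac{t}{k}-\tfrac{s}{k}\bigr)=c_\theta(t-s),
\]
because $t/k\le 1$ and $\sum_k\pi_k/k=c_\theta$ by \eqref{eq:ctheta}.

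\textbf{Covariance limit.} Write $(\log p)^2/(n\log n)=r_{n,p}^2/v_n$. The covariance term equals $H_{s,t}(p/n)$, where
\[
 H_{s,t}(x):=\Cov\bigl(\1_{\{xG\le s\}},\1_{\{xG\le t\}}\bigr)=F(x,s\wedge t)-F(x,s)F(x,t),
 \qquad F(x,u)=\sum_k\pi_k\1_{\{xk\le u\}}=1-q^{\lfloor u/x\rfloor}.
\]
The function $H_{s,t}$ is bounded by $1$. On $[\delta,1]$ it is piecewise constant with finitely many jumps, so it is again Riemann integrable there.

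Applying \eqref{eq:prime-Riemann-weighted} with $r=2$ gives the limit $\int_0^1 H_{s,t}(x)\dd x$. This is $C_\theta(s,t)$ by definition \eqref{eq:cov-integral}.

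\textbf{Main obstacle.} The only delicate point is that $h_{s,t}$ and $H_{s,t}$ have infinitely many discontinuities accumulating at $x=0$. So they are not literally Riemann integrable on $[0,1]$ in a way that allows finite step approximation near $0$. I will handle this exactly as in the proof of Lemma~\ref{lem:prime-Riemann}:
\begin{itemize}
\item approximate on $[\delta,1]$, where there are finitely many jumps;
\item bound the contribution of primes $p\le\delta n$ by $\frac1{v_n}\pi(\delta n)\le 2\delta$ for large $n$, using $r_{n,p}\le1$ and the boundedness of the integrands;
\item let $\delta\downarrow0$.
\end{itemize}
Alternatively, one can note that the set of discontinuities is countable and hence Lebesgue-null, which gives Riemann integrability in the improper-but-bounded sense. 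Either route gives the result.
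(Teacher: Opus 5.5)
Your proof is correct and follows the paper's argument. You write both sums as prime Riemann sums with one or two factors of $r_{n,p}$, apply Lemma~\ref{lem:prime-Riemann}, and evaluate the mean integral by Tonelli. Your ``main obstacle'' is not actually an obstacle: a bounded function whose discontinuities form a countable set is genuinely Riemann integrable on $[0,1]$ by Lebesgue's criterion, and this is exactly what the paper invokes.
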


\begin{proof}
For fixed $s,t$, the functions of $x$ appearing in the expectations and covariances are bounded and have discontinuities only at the countable set
$\{s/k,t/k:k\ge1\}$, so they are Riemann integrable.  Lemma~\ref{lem:prime-Riemann}, with one or two powers of $r_{n,p}$ as appropriate, gives both limits.

The integral in \eqref{eq:mean-limit} can be evaluated exactly.  Since $0\le s\le t\le1$ and $k\ge1$,
\[
 \operatorname{Leb}\{x\in[0,1]:s<xk\le t\}
 =\frac{t-s}{k}.
\]
Tonelli's theorem therefore gives
\[
 \int_0^1\Prob(s<xG\le t)\dd x
 =\sum_{k\ge1}\pi_k\frac{t-s}{k}
 =c_\theta(t-s).
\]
\end{proof}

We also need a uniform local version of the variance estimate.

\begin{lemma}\label{lem:local-variance}
For $\delta>0$, set
\begin{equation}\label{eq:local-variance-definition}
 V_n(\delta)
 :=\sup_{\substack{0\le s\le t\le1\\t-s\le\delta}}
 \frac1{n\log n}
 \sum_{\sqrt n<p\le n}(\log p)^2
 \Prob(s<x_{n,p}G_{n,p}\le t).
\end{equation}
Then
\begin{equation}\label{eq:local-variance-bound}
 \limsup_{n\to\infty}V_n(\delta)
 \le2c_\theta\delta,
 \qquad 0<\delta\le1.
\end{equation}
\end{lemma}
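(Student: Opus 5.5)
The bound is uniform in $(s,t)$, so I will not pass to the limit for fixed $(s,t)$. Instead I will bound the sum by a quantity that no longer depends on the location of $[s,t]$, only on its length. First, since $(\log p)^2\le \log n\cdot\log p$ and $r_{n,p}\le1$, it suffices to bound
\[
 \frac1{n}\sum_{\sqrt n<p\le n}\log p\,\Prob(s<x_{n,p}G_{n,p}\le t)
 =\sum_{k\ge1}\pi_k\,\frac1n\sum_{\substack{\sqrt n<p\le n\\ ns/k<p\le nt/k}}\log p .
\]
This expansion writes the probability as $\sum_k\pi_k\1_{\{s<x_{n,p}k\le t\}}$ and uses Tonelli.

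The inner sum is bounded by $\vartheta(nt/k)-\vartheta(ns/k)$. So the task reduces to a uniform short-interval bound for the Chebyshev function: for intervals $(u,u+h]\subset(0,n/k]$ of length $h=n(t-s)/k\le n\delta/k$.

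I will control this by a crude but uniform prime-count bound. The plan is to write, for $y\le X$ with $X=n/k$,
\[
 \vartheta(y+h)-\vartheta(y)\le h+\varepsilon X+\omega(X),
\]
where $\omega(X)=\sup_{z\le X}|\vartheta(z)-z|=o(X)$ by the prime number theorem and Chebyshev's bound (the same argument as for $\varepsilon_n$ in Remark~\ref{rem:D0-compactness}). Indeed, $|\vartheta(z)-z|\le \eta z$ for $z\ge z_0(\eta)$, and $\vartheta(z)\le C z_0$ below that. Hence
\[
 \vartheta(y+h)-\vartheta(y)\le h+2\sup_{z\le X}|\vartheta(z)-z|\le h+2\eta X+2Cz_0(\eta).
\]
Dividing by $n$ and multiplying by $\pi_k$ gives, for each fixed $k$, the bound
\[
 \pi_k\Bigl(\frac{\delta}{k}+\frac{2\eta}{k}+\frac{2Cz_0}{n}\Bigr).
\]
For the tail $k>K$, I use the trivial bound $\vartheta(n/k)/n\le C/k$, which gives at most $C\sum_{k>K}\pi_k/k$. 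This is uniformly small as $K\to\infty$.

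Summing over $k\le K$, taking the supremum over $(s,t)$ (the bound is already independent of $s$), letting $n\to\infty$, then $\eta\downarrow0$, then $K\to\infty$, yields
\[
 \limsup_{n\to\infty} V_n(\delta)\le \delta\sum_k\pi_k/k=c_\theta\delta\le 2c_\theta\delta.
\]
The factor $2$ is slack; it absorbs, for instance, a version where one only has $\vartheta(y+h)-\vartheta(y)\le 2h+o(X)$ from a Brun–Titchmarsh type bound. I would keep the cruder constant if needed.

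The main obstacle is the uniformity: short intervals of length $\asymp n\delta/k$ sitting anywhere, including near $\sqrt n$, where the prime number theorem gives no relative precision. The argument handles this because the error $o(X)$ is absolute rather than relative to $h$, and $h$ is comparable to $X$ up to the fixed factor $\delta$. So no genuine short-interval prime theorem is needed, only the uniform $\sup_{z\le X}|\vartheta(z)-z|=o(X)$.
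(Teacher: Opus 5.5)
Your argument is correct, but it takes a different route from the paper.

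\textbf{The paper's route.} It handles the supremum over $(s,t)$ softly. Every interval of length at most $\delta$ is covered by one of finitely many unions of at most two adjacent cells of a mesh-$\delta$ grid. Monotonicity of $\Prob(s<xG\le t)$ under inclusion of intervals then reduces the problem to these fixed unions. The prime Riemann sum lemma (Lemma~\ref{lem:prime-Riemann}) is applied to each of them, and the maximum over the finite collection is taken. The covering step is exactly where the factor $2$ comes from.

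\textbf{Your route.} You instead use the explicit mixture structure
$\Prob(s<x_{n,p}G_{n,p}\le t)=\sum_k\pi_k\1_{\{ns/k<p\le nt/k\}}$ and discard the factor $r_{n,p}\le1$. You then bound $\vartheta$-increments over arbitrary subintervals of $[0,n/k]$ by $h+2\eta n/k+2Cz_0(\eta)$, uniformly in $k$ and in the location of the interval. Dividing by $n$ rather than by $n/k$ keeps the constant error term harmless even for large $k$.

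\textbf{Comparison.} Your approach gives the sharp bound $\limsup_n V_n(\delta)\le c_\theta\delta$, with no grid and no interchange of limits over a finite collection. The truncation at $K$ and the Brun--Titchmarsh remark are not needed: since $\sum_k\pi_k=1$, summing your per-$k$ bound over all $k$ already gives $c_\theta(\delta+2\eta)+2Cz_0/n$. The paper's version reuses the fixed-interval Riemann-sum machinery unchanged. It would apply to any family of summands that is monotone in the interval and has known fixed-interval limits, without the specific geometric-mark decomposition.
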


\begin{proof}
Fix $\delta\in(0,1]$.  Let $M:=\lceil1/\delta\rceil$ and use the regular grid
\[
 r_j:=(j\delta)\wedge1,
 \qquad 0\le j\le M.
\]
Every interval $(s,t]\subset[0,1]$ of length at most $\delta$ is contained in the union of at most two adjacent grid cells, and the containing union has length at most $2\delta$.  Only finitely many such unions occur for fixed $\delta$.  For a fixed union $(u,v]$, Lemma~\ref{lem:prime-Riemann} with the factor $r_{n,p}^2$ gives
\begin{align*}
 &\frac1{n\log n}
 \sum_{\sqrt n<p\le n}(\log p)^2
 \Prob(u<x_{n,p}G_{n,p}\le v)\\
 &\qquad=\frac1{v_n}
 \sum_{\sqrt n<p\le n}r_{n,p}^2
 \Prob(u<x_{n,p}G_{n,p}\le v)\\
 &\qquad\longrightarrow
 \int_0^1\Prob(u<xG\le v)\dd x
 =c_\theta(v-u)
 \le2c_\theta\delta.
\end{align*}
Taking the maximum over the finite collection of adjacent-cell unions and then the limsup proves \eqref{eq:local-variance-bound}.
\end{proof}

\section{Proof of the functional LDP}\label{sec:LDP}

By \eqref{eq:LDP-deterministic-equivalence} and \eqref{eq:LDP-interpolation}, it suffices to prove the LDP for the polygonal interpolation $\widehat W_n$ of $W_n$.

\medskip
\subsection{Finite-dimensional large deviations}

Fix $m\ge1$, $0\le t_1<\cdots<t_m\le1$, and
$\lambda=(\lambda_1,\ldots,\lambda_m)\in\R^m$.  Independence of the geometric marks gives
\begin{align}
 &\frac1{v_n}
 \log\E\exp\bigg\{
 v_n\sum_{j=1}^m\lambda_jW_n(t_j)
 \bigg\}
 \nonumber\\
 &\quad=\frac1{v_n}\sum_{\sqrt n<p\le n}
 \log\bigg[
 \sum_{k\ge1}\pi_k
 \exp\bigg\{
 r_{n,p}\sum_{j=1}^m\lambda_j
 \1_{\{x_{n,p}k\le t_j\}}
 \bigg\}
 \bigg].
 \label{eq:LDP-fdd-mgf}
\end{align}
For fixed $\lambda$, the logarithms in \eqref{eq:LDP-fdd-mgf} are uniformly bounded.  On every interval $x\in[\varepsilon,1]$, their dependence on the second argument $r_{n,p}$ is uniformly continuous, and the limiting dependence on $x$ has discontinuities only at the points $t_j/k$.  Lemma~\ref{lem:prime-Riemann} therefore yields
\begin{align}
 &\lim_{n\to\infty}\frac1{v_n}
 \log\E\exp\bigg\{
 v_n\sum_{j=1}^m\lambda_jW_n(t_j)
 \bigg\}
 \nonumber\\
 &\quad=\Lambda_{\boldsymbol t}(\lambda)
 :=\int_0^1\log\bigg[
 q^{m(x)}+
 \sum_{k=1}^{m(x)}\pi_k
 \exp\bigg\{\sum_{j=1}^m\lambda_j
 \1_{\{xk\le t_j\}}\bigg\}
 \bigg]\dd x.
 \label{eq:LDP-fdd-limit}
\end{align}
The expression in square brackets is the same as the infinite sum over $k\ge1$, because all invisible marks $k>m(x)$ have exponent zero and total mass $q^{m(x)}$.

The function $\Lambda_{\boldsymbol t}$ is finite on all of $\R^m$.  Differentiation under the integral is justified by boundedness of the indicator vectors and gives derivatives of every order.  Thus the effective domain is $\R^m$ and the function is essentially smooth.  The random vectors are uniformly bounded, so exponential tightness in $\R^m$ is automatic.  The G\"artner--Ellis theorem \cite[Theorem~2.3.6]{DZ} gives an LDP for
\[
 (W_n(t_1),\ldots,W_n(t_m))
\]
with speed $v_n$ and good convex rate
\begin{equation}\label{eq:LDP-fdd-rate}
 I_{\boldsymbol t}(z)
 =\sup_{\lambda\in\R^m}
 \{\ip{\lambda}{z}-\Lambda_{\boldsymbol t}(\lambda)\}.
\end{equation}
At every time the interpolation error is at most the largest jump, which is bounded by $1/v_n$.  Hence the same finite-dimensional LDP holds for $\widehat W_n$ by deterministic exponential equivalence.

\medskip
\subsection{Exponential equicontinuity}

The large deviation process is not centered and therefore preserves the monotonicity of the least common multiple path.  Monotonicity reduces the modulus estimate to nonnegative increments, but it does not by itself give compact containment in the $J_1$-topology.  See Remark~\ref{rem:D0-compactness}.  We therefore verify exponential equicontinuity explicitly.  The moderate deviation process in Section~\ref{sec:MDP} is centered and is no longer monotone, so its increment estimate requires a separate centered maximal inequality.

For $0\le s<t\le1$, put
\[
 D_n(s,t):=W_n(t)-W_n(s).
\]
This is a sum of independent nonnegative random variables.  For $0\le r\le1$ and $\lambda\ge0$, convexity of the exponential function gives
\[
 e^{\lambda r}-1\le r(e^\lambda-1).
\]
Consequently,
\begin{align}
 \frac1{v_n}\log\E e^{\lambda v_nD_n(s,t)}
 &\le\frac1{v_n}\sum_{\sqrt n<p\le n}
 \Prob(s<x_{n,p}G_{n,p}\le t)
 (e^{\lambda r_{n,p}}-1)
 \nonumber\\
 &\le(e^\lambda-1)\E D_n(s,t).
 \label{eq:LDP-increment-mgf}
\end{align}

Fix $\delta\in(0,1]$ and use the regular grid with mesh $\delta$.  Every interval of length at most $\delta$ is contained in the union of at most two adjacent grid cells.  There are at most $2\delta^{-1}+2$ such unions, and each has length at most $2\delta$.  For each fixed union, Lemma~\ref{lem:mean-covariance} gives convergence of the mean increment to $c_\theta$ times its length.  Since the collection is finite for fixed $\delta$, for all sufficiently large $n$ every such mean is at most $3c_\theta\delta$.

Assume $3c_\theta\delta<\varepsilon$.  Chernoff's inequality and \eqref{eq:LDP-increment-mgf} give, for every containing union $(s,t]$,
\begin{equation}\label{eq:LDP-increment-tail}
 \Prob(D_n(s,t)>\varepsilon)
 \le\exp\{-v_n\Psi(\varepsilon,3c_\theta\delta)\},
\end{equation}
where
\begin{equation}\label{eq:Psi}
 \Psi(\varepsilon,a)
 :=\sup_{\lambda\ge0}
 \{\lambda\varepsilon-a(e^\lambda-1)\}
 =\varepsilon\log\frac{\varepsilon}{a}-\varepsilon+a.
\end{equation}
The last identity follows by choosing $e^\lambda=\varepsilon/a$.  Since
$\Psi(\varepsilon,a)\to\infty$ as $a\downarrow0$, the finite union bound yields
\begin{equation}\label{eq:LDP-exponential-equicontinuity}
 \lim_{\delta\downarrow0}\limsup_{n\to\infty}
 \frac1{v_n}\log\Prob\bigg(
 \sup_{\abs{t-s}\le\delta}
 \abs{W_n(t)-W_n(s)}>\varepsilon
 \bigg)
 =-\infty.
\end{equation}
Here the absolute value is unnecessary because $W_n$ is nondecreasing.

At a fixed time, at most one prime contribution can jump.  Indeed, if
$pG_{n,p}=qG_{n,q}\le n$ for distinct primes $p,q>\sqrt n$, the common integer would be divisible by $pq>n$.  Hence every jump of $W_n$ is at most
$\max_pr_{n,p}/v_n\le1/v_n$.  In particular,
$\norm{\widehat W_n-W_n}_\infty\le1/v_n$, and therefore
\eqref{eq:LDP-exponential-equicontinuity} also holds for $\widehat W_n$.  In addition,
\[
 \sup_{0\le t\le1}\widehat W_n(t)
 \le\frac{\vartheta(n)}n,
\]
which is uniformly bounded by the Chebyshev estimate.

\medskip
\subsection{Completion of the LDP proof}

The Euclidean index set $[0,1]$ is totally bounded.  The finite-dimensional LDPs and \eqref{eq:LDP-exponential-equicontinuity} verify conditions (a.1)--(a.3) of \cite[Theorem~2.4]{ArconesLD}, with the inverse speed parameter equal to $v_n^{-1}$.  Therefore $\widehat W_n$ satisfies an LDP in $\ell^\infty[0,1]$ with good projective rate
\begin{equation}\label{eq:LDP-projective-proof}
 \widetilde I_\theta(f)
 =\sup_{m,\boldsymbol t}
 I_{\boldsymbol t}(f(t_1),\ldots,f(t_m)).
\end{equation}
The processes are measurable and separable because each is polygonal on the deterministic grid $\{j/n:0\le j\le n\}$.  They take values in the closed subspace $C[0,1]$ of $\ell^\infty[0,1]$, so the same LDP holds in $C[0,1]$ with the uniform norm.  Propositions~\ref{prop:entropy-equivalence} and~\ref{prop:dual-rate} identify the projective rate in \eqref{eq:LDP-projective-proof} with $I_\theta$ in \eqref{eq:functional-rate-main}.

The deterministic bounds \eqref{eq:LDP-deterministic-equivalence} and
\eqref{eq:LDP-interpolation} imply exponential equivalence at speed $v_n$.  See
\cite[Theorem~4.2.13]{DZ}.  They transfer the LDP from $\widehat W_n$ to
$\widehat X_n$.  The inclusion
\[
 (C[0,1],\norm{\cdot}_\infty)
 \longrightarrow(D[0,1],J_1)
\]
is continuous.  The contraction principle therefore gives an LDP for $\widehat X_n$ viewed as a $D[0,1]$-valued random element.  The rate is finite only at those $D$-paths that coincide with a continuous function. Equivalently, it is the same rate on $C[0,1]$ and is $+\infty$ on $D[0,1]\setminus C[0,1]$.  Finally,
$\norm{X_n-\widehat X_n}_\infty\to0$ deterministically, so exponential equivalence gives the claimed $J_1$-LDP for $X_n$.  This proves Theorem~\ref{thm:FLDP}.

The endpoint evaluation $f\mapsto f(1)$ is continuous both on $C[0,1]$ and on
$D[0,1]$ with the $J_1$ topology.  The contraction principle
\cite[Theorem~4.2.1]{DZ} therefore gives an endpoint LDP with rate
$\inf\{I_\theta(f):f(1)=y\}$.  On the other hand, the preceding finite-dimensional calculation with $m=1$ and $t_1=1$ gives directly the scalar Legendre rate in
\eqref{eq:endpoint-rate-Legendre}.  Uniqueness of the good rate function identifies these two expressions.  As for the explicit cumulant, if
\[
 x\in\left(\frac1{j+1},\frac1j\right],
\]
then $m(x)=j$ and $\Prob(xG\le1)=1-q^j$.  Hence
\begin{align*}
 \int_0^1\log\E e^{\lambda\1_{\{xG\le1\}}}\dd x
 &=\sum_{j\ge1}\frac1{j(j+1)}
 \log\bigl(q^j+(1-q^j)e^\lambda\bigr),
\end{align*}
which is \eqref{eq:endpoint-cumulant}.  Proposition~\ref{prop:endpoint-rate} gives the remaining assertions.

\section{Proof of the functional MDP}\label{sec:MDP}

By \eqref{eq:MDP-deterministic-equivalence} and \eqref{eq:MDP-interpolation}, it is enough to study
\begin{equation}\label{eq:Zn}
 Z_n(t):=
 \frac{S_n(t)-\E S_n(t)}{b_n\sqrt{n\log n}}.
\end{equation}
Set
\begin{equation}\label{eq:Un}
 U_n(t):=
 \frac{S_n(t)-\E S_n(t)}{\sqrt{n\log n}},
 \qquad Z_n=U_n/b_n.
\end{equation}

\medskip
\subsection{Finite-dimensional moderate deviations}

Fix $m\ge1$, $0\le t_1<\cdots<t_m\le1$, and $\lambda\in\R^m$.  Put
\begin{equation}\label{eq:Hnp}
 H_{n,p}:=\sum_{j=1}^m\lambda_j
 \left(
 \1_{\{x_{n,p}G_{n,p}\le t_j\}}
 -\Prob(x_{n,p}G_{n,p}\le t_j)
 \right).
\end{equation}
Then $\E H_{n,p}=0$ and
\[
 \abs{H_{n,p}}\le2\sum_{j=1}^m\abs{\lambda_j}=:M_\lambda.
\]
The scaled logarithmic moment generating function equals
\begin{align}
 &\frac1{b_n^2}
 \log\E\exp\bigg\{
 b_n^2\sum_{j=1}^m\lambda_jZ_n(t_j)
 \bigg\}
 \nonumber\\
 &\qquad=\frac1{b_n^2}\sum_{\sqrt n<p\le n}
 \log\E e^{u_{n,p}H_{n,p}},
 \qquad
 u_{n,p}:=\frac{b_n\log p}{\sqrt{n\log n}}.
 \label{eq:MDP-fdd-start}
\end{align}
Condition \eqref{eq:MDP-scale} implies
\begin{equation}\label{eq:max-unp}
 \max_{p\le n}\abs{u_{n,p}}
 \le b_n\sqrt{\frac{\log n}{n}}
 \longrightarrow0.
\end{equation}

We use the following elementary cumulant estimate.  If $\E H=0$ and $\abs H\le M$, then there are constants $u_0(M)>0$ and $C(M)<\infty$ such that
\begin{equation}\label{eq:cumulant-expansion}
 \abs{\log\E e^{uH}-\tfrac12u^2\E H^2}
 \le C(M)\abs u^3,
 \qquad \abs u\le u_0(M).
\end{equation}
Indeed, Taylor's formula gives
\[
 \E e^{uH}=1+\frac{u^2}{2}\E H^2+R(u),
 \qquad
 \abs{R(u)}\le\frac{e^{\abs uM}M^3}{6}\abs u^3,
\]
and $\log(1+z)=z+O(z^2)$ uniformly for small $z$.

Applying \eqref{eq:cumulant-expansion} in \eqref{eq:MDP-fdd-start}, the quadratic term is
\begin{equation}\label{eq:MDP-quadratic-term}
 \frac1{2n\log n}
 \sum_{\sqrt n<p\le n}(\log p)^2\E H_{n,p}^2.
\end{equation}
Lemma~\ref{lem:mean-covariance} gives
\begin{equation}\label{eq:MDP-quadratic-limit}
 \frac1{2n\log n}
 \sum_{\sqrt n<p\le n}(\log p)^2\E H_{n,p}^2
 \longrightarrow
 \frac12\sum_{i,j=1}^m
 \lambda_i\lambda_jC_\theta(t_i,t_j).
\end{equation}
The total remainder, after division by $b_n^2$, is bounded by
\begin{align}
 \frac{C}{b_n^2}\sum_{p\le n}\abs{u_{n,p}}^3
 &\le
 C\frac{b_n}{(n\log n)^{3/2}}
 \sum_{p\le n}(\log p)^3
 \nonumber\\
 &\le Cb_n\sqrt{\frac{\log n}{n}}
 \longrightarrow0,
 \label{eq:MDP-cubic-remainder}
\end{align}
where we used $\pi(n)\le Cn/\log n$.  Hence
\begin{align}
 &\lim_{n\to\infty}\frac1{b_n^2}
 \log\E\exp\bigg\{
 b_n^2\sum_{j=1}^m\lambda_jZ_n(t_j)
 \bigg\}
 \nonumber\\
 &\qquad=\frac12\sum_{i,j=1}^m
 \lambda_i\lambda_jC_\theta(t_i,t_j).
 \label{eq:MDP-fdd-limit}
\end{align}
The limiting cumulant is finite and differentiable on all of $\R^m$.  Finiteness in a neighborhood of the origin also gives exponential tightness in $\R^m$ by the usual coordinatewise Chernoff bound.  The G\"artner--Ellis theorem \cite[Theorem~2.3.6]{DZ} therefore yields the finite-dimensional MDP, with the Legendre transform of the quadratic form in \eqref{eq:MDP-fdd-limit}.

\medskip
\subsection{Exponential equicontinuity}

The argument here is different from the large deviation compactness argument.  After centering by the exact mean, the process $Z_n$ is no longer monotone, so monotone compactness arguments are unavailable.  We therefore prove exponential equicontinuity directly for centered increments.

Define
\begin{equation}\label{eq:weights-and-times}
 c_{n,p}:=\frac{\log p}{\sqrt{n\log n}},
 \qquad
 T_{n,p}:=x_{n,p}G_{n,p}.
\end{equation}
Then
\begin{equation}\label{eq:Un-empirical}
 U_n(t)=\sum_{\sqrt n<p\le n}c_{n,p}
 \left(
 \1_{\{T_{n,p}\le t\}}
 -\Prob(T_{n,p}\le t)
 \right).
\end{equation}
The maximal weight satisfies
\begin{equation}\label{eq:max-weight}
 \beta_n:=\max_{p\le n}c_{n,p}
 \le\sqrt{\frac{\log n}{n}},
 \qquad
 b_n\beta_n\longrightarrow0.
\end{equation}
For $\delta>0$, define
\begin{equation}\label{eq:Omega-n}
 \Omega_n(\delta)
 :=\sup_{\substack{0\le s\le t\le1\\t-s\le\delta}}
 \abs{U_n(t)-U_n(s)}.
\end{equation}

Fix $\delta\in(0,1]$ and use the regular grid
\[
 0=r_0<r_1<\cdots<r_M=1,
 \qquad
 r_j=(j\delta)\wedge1,
 \qquad
 M\le\delta^{-1}+1.
\]
For the cell $J_j=(r_{j-1},r_j]$, put
\begin{align}
 R_{n,j}
 :=\sup_{u\in J_j}
 \bigg|\sum_{\sqrt n<p\le n}c_{n,p}
 \Big(\1_{\{r_{j-1}<T_{n,p}\le u\}}
 -\Prob(r_{j-1}<T_{n,p}\le u)\Big)\bigg|.
 \label{eq:Rnj}
\end{align}
Every interval of length at most $\delta$ meets at most two adjacent grid cells.  Inside one cell, a centered increment is the difference of two terms appearing in \eqref{eq:Rnj}.  Consequently,
\begin{equation}\label{eq:Omega-grid-bound}
 \Omega_n(\delta)\le4\max_{1\le j\le M}R_{n,j}.
\end{equation}

Let
\begin{equation}\label{eq:Vnj}
 V_{n,j}:=\sum_{\sqrt n<p\le n}c_{n,p}^2
 \Prob(T_{n,p}\in J_j).
\end{equation}
Since each cell has length at most $\delta$, Lemma~\ref{lem:local-variance} gives
\begin{equation}\label{eq:Vnj-bound}
 \limsup_{n\to\infty}\max_{1\le j\le M}V_{n,j}
 \le2c_\theta\delta.
\end{equation}
Proposition~\ref{prop:local-threshold}, applied separately to each cell, yields
\begin{equation}\label{eq:Rnj-expectation}
 \E R_{n,j}\le4\sqrt{V_{n,j}}.
\end{equation}
Thus, for fixed $\delta$,
\begin{equation}\label{eq:Rnj-expectation-uniform}
 \limsup_{n\to\infty}\max_{1\le j\le M}\E R_{n,j}
 \le C\sqrt\delta.
\end{equation}

Fix $\eta>0$.  Since $b_n\to\infty$, \eqref{eq:Rnj-expectation-uniform} implies that, for fixed $\delta$ and all sufficiently large $n$,
\[
 \max_{1\le j\le M}\E R_{n,j}
 \le\frac{b_n\eta}{8}.
\]
By \eqref{eq:Omega-grid-bound},
\[
 \Prob\bigl(\Omega_n(\delta)>b_n\eta\bigr)
 \le\sum_{j=1}^M
 \Prob\bigg(R_{n,j}>\frac{b_n\eta}{4}\bigg).
\]
Apply the concentration estimate \eqref{eq:local-threshold-concentration} with
$x=b_n\eta/8$.  Using the elementary inequality
$\log(1+y)\ge y/(1+y)$, $y\ge0$, we obtain, for a universal constant $C_T$,
\begin{align}
 \Prob\left(R_{n,j}>\frac{b_n\eta}{4}\right)
 &\le C_T\exp\bigg\{
 -\frac{x}{C_T\beta_n}
 \log\bigg(1+
 \frac{x\beta_n}{V_{n,j}+\beta_n\E R_{n,j}}
 \bigg)
 \bigg\}
 \nonumber\\
 &\le C_T\exp\bigg\{
 -\frac{x^2}
 {C_T\{V_{n,j}+\beta_n\E R_{n,j}+x\beta_n\}}
 \bigg\}.
 \label{eq:Rnj-tail}
\end{align}
For fixed $\delta$, relations \eqref{eq:max-weight}, \eqref{eq:Vnj-bound}, and
\eqref{eq:Rnj-expectation-uniform} show that the denominator in the last exponent is at most $C\delta+o(1)$, uniformly in $j$.  Since $M$ is fixed when $\delta$ is fixed, the union bound gives
\begin{equation}\label{eq:MDP-modulus-bound}
 \limsup_{n\to\infty}\frac1{b_n^2}
 \log\Prob\left(
 \Omega_n(\delta)>b_n\eta
 \right)
 \le-\frac{c\eta^2}{\delta}
\end{equation}
for a constant $c>0$ independent of $\delta$ and $\eta$.  Then, letting $\delta\downarrow0$ yields
\begin{equation}\label{eq:MDP-exponential-equicontinuity}
 \lim_{\delta\downarrow0}\limsup_{n\to\infty}
 \frac1{b_n^2}\log\Prob\left(
 \sup_{\abs{t-s}\le\delta}
 \abs{Z_n(t)-Z_n(s)}>\eta
 \right)
 =-\infty.
\end{equation}

At any fixed time, at most one large prime indicator can jump.  Indeed, if
$pG_{n,p}=qG_{n,q}\le n$ for distinct primes $p,q>\sqrt n$, then the common integer is divisible by $pq>n$.  The jump of the centered process $U_n$ is therefore bounded by $2\beta_n$: one $\log p$ term can jump in $S_n$, and the corresponding deterministic mean has a jump of size at most the same weight.  Hence the polygonal interpolation $\widehat Z_n$ satisfies
\begin{equation}\label{eq:Zn-interpolation}
 \norm{\widehat Z_n-Z_n}_\infty
 \le\frac{2\beta_n}{b_n}\longrightarrow0,
\end{equation}
and \eqref{eq:MDP-exponential-equicontinuity} remains valid for $\widehat Z_n$.  The deterministic bound in \eqref{eq:Zn-interpolation} also transfers every finite-dimensional MDP from $Z_n$ to $\widehat Z_n$.

\medskip
\subsection{Completion of the MDP proof}

The finite-dimensional MDPs and \eqref{eq:MDP-exponential-equicontinuity} verify conditions~(a.1)--(a.3) of \cite[Theorem~2.4]{ArconesLD}, now with inverse speed $b_n^{-2}$.  Consequently, $\widehat Z_n$ satisfies an LDP in $C[0,1]$ with speed $b_n^2$ and projective rate
\begin{align}\label{eq:MDP-projective-rate}
 \widetilde J_\theta(f)
 =\sup_{m,\boldsymbol t,\lambda}
 \bigg\{
 \sum_{j=1}^m\lambda_jf(t_j)
 -\frac12\sum_{i,j=1}^m
 \lambda_i\lambda_jC_\theta(t_i,t_j)
 \bigg\}.
\end{align}
We next identify the projective rate.  Let
\[
 \mathcal V_\theta:=\operatorname{span}
 \{C_\theta(t,\cdot):0\le t\le1\},
\]
which is dense in $\cH_\theta$.  For a function $f:[0,1]\to\R$, define on formal finite linear combinations
\[
 L_f\bigg(\sum_{j=1}^m\lambda_jC_\theta(t_j,\cdot)\bigg)
 :=\sum_{j=1}^m\lambda_jf(t_j).
\]
If this prescription is not well defined, there is a zero norm combination on which it is nonzero. Scaling that combination shows immediately that the supremum in \eqref{eq:MDP-projective-rate} is infinite.  Suppose that it is well defined.  Then \eqref{eq:MDP-projective-rate} can be written as
\begin{equation}\label{eq:RKHS-dual-functional}
 \widetilde J_\theta(f)
 =\sup_{g\in\mathcal V_\theta}
 \bigg\{L_f(g)-\frac12\norm{g}_{\cH_\theta}^2\bigg\}.
\end{equation}
If $f\in\cH_\theta$, the reproducing property gives $L_f(g)=\ip{f}{g}_{\cH_\theta}$.  Completing the square and using the density of $\mathcal V_\theta$ yields
\[
 \widetilde J_\theta(f)=\frac12\norm{f}_{\cH_\theta}^2.
\]
If $f\notin\cH_\theta$ and $L_f$ is unbounded on the unit ball of $\mathcal V_\theta$, choose $g_r\in\mathcal V_\theta$ with $\norm{g_r}\le1$ and $L_f(g_r)\to\infty$, and optimizing over scalar multiples gives a lower bound $L_f(g_r)^2/2$, which tends to $+\infty$.  If $L_f$ were bounded, it would extend continuously to $\cH_\theta$. The Riesz representation theorem would then provide $h\in\cH_\theta$ such that
$L_f(g)=\ip{h}{g}_{\cH_\theta}$ for all $g\in\mathcal V_\theta$.  Taking $g=C_\theta(t,\cdot)$ would give $f(t)=h(t)$ for every $t$, contrary to $f\notin\cH_\theta$.  Hence the projective rate is exactly \eqref{eq:MDP-rate}.

Moreover, the rate function is good.  Indeed, for $f\in\cH_\theta$,
\begin{equation}\label{eq:RKHS-equicontinuity}
 \abs{f(t)-f(s)}
 \le\norm{f}_{\cH_\theta}
 \sqrt{C_\theta(t,t)+C_\theta(s,s)-2C_\theta(s,t)}.
\end{equation}
The kernel is continuous by dominated convergence in \eqref{eq:cov-integral}. Thus every RKHS ball is uniformly bounded and equicontinuous.  To verify closedness in $C[0,1]$, let $(f_n)$ lie in a fixed RKHS ball and converge uniformly to $f$.  The RKHS is separable because the span of kernel sections indexed by a countable dense subset of $[0,1]$ is dense.  A subsequence therefore converges weakly in the Hilbert space to some $h$ in the same ball.  Since point evaluations are continuous, $f_n(t)\to h(t)$ along this subsequence for every $t$, while uniform convergence gives $f_n(t)\to f(t)$, so $f=h\in\cH_\theta$.  The Arzel\`a--Ascoli theorem proves compactness of every level set.

The deterministic estimates \eqref{eq:MDP-deterministic-equivalence},
\eqref{eq:MDP-interpolation}, and \eqref{eq:Zn-interpolation} transfer the MDP to $\widehat Y_n$ by exponential equivalence.  Applying the continuous inclusion from $C[0,1]$ with the uniform norm into $D[0,1]$ with the $J_1$-topology, and then using exponential equivalence between $Y_n$ and $\widehat Y_n$, gives the asserted $J_1$-MDP for the step processes.  This proves Theorem~\ref{thm:FMDP}.

Finally, the endpoint map $f\mapsto f(1)$ is continuous.  The contraction principle \cite[Theorem~4.2.1]{DZ} gives Corollary~\ref{cor:endpoint-MDP}.  The squared norm of the evaluation functional at time one is $C_\theta(1,1)=\sigma_\theta^2$.  Thus Cauchy--Schwarz gives
\[
 x^2=f^2(1)
 =\ip{f}{C_\theta(1,\cdot)}_{\cH_\theta}^2
 \le \norm{f}_{\cH_\theta}^2\sigma_\theta^2.
\]
Equality is attained by
$f=xC_\theta(1,\cdot)/\sigma_\theta^2$.  Hence the contracted rate is
$x^2/(2\sigma_\theta^2)$, as stated in \eqref{eq:endpoint-MDP-rate}.

\section{Proof of the functional LIL}\label{sec:LIL}

In this section we prove Theorem~\ref{thm:FLIL}.  The MDP supplies the logarithmic probabilities on a sparse sequence.  Two additional estimates are needed, as the process is not a partial sum process in the parameter n. First, a common-cutoff interpolation bound on geometric blocks handles the upper inclusion. Second, a lacunary independent-block approximation is used for the lower cluster set inclusion.

Recall the normalization $a_n$ from \eqref{eq:LIL-normalization}, and write
\begin{equation}\label{eq:LIL-centered-S}
 C_n(t):=S_n(t)-\E S_n(t),\qquad
 \Xi_n^S(t):=\frac{C_n(t)}{a_n}.
\end{equation}
Let $\widehat C_n$ and $\widehat\Xi_n^S$ denote the corresponding polygonal interpolations.  For the critical choice $b_n=\sqrt{2\log\log n}$, the process $\Xi_n^S$ is exactly the large prime moderate deviation process $Z_n$ from \eqref{eq:Zn}.  Lemma~\ref{lem:deterministic-approximation} and the interpolation bound \eqref{eq:MDP-interpolation} give
\begin{equation}\label{eq:LIL-deterministic-reduction}
 \norm{\widehat\Xi_n-\widehat\Xi_n^S}_\infty
 \le \frac{2C\sqrt n}{a_n}+o(1)\longrightarrow0.
\end{equation}
It is therefore enough to prove the compact LIL for $\widehat\Xi_n^S$.

\medskip
\subsection{Block interpolation}

The next lemma controls the change of the centered large prime process when the endpoint $n$ varies inside a short geometric block.  This is the substitute for the usual increment estimate in partial sum proofs of Strassen's theorem.

\begin{lemma}\label{lem:LIL-geometric-interpolation}
For $\rho\in(0,1/4)$, let $N_m=\lfloor(1+\rho)^m\rfloor$.  Then, on an event of probability one,
\begin{equation}\label{eq:LIL-geometric-interpolation}
 \lim_{\substack{\rho\downarrow0\\ \rho\in\mathbb Q}}
 \limsup_{m\to\infty}
 \max_{N_m\le n\le N_{m+1}}
 \frac{\norm{C_n-C_{N_m}}_\infty}{a_{N_m}}=0.
\end{equation}
\end{lemma}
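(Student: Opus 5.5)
The plan is to reduce the variation in the endpoint $n$ to a variation in the time variable of one fixed process per block. The modulus bound \eqref{eq:MDP-modulus-bound} at the critical scale $b_n=\sqrt{2\log\log n}$ then applies, and Borel--Cantelli along $m$ finishes the argument.

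\emph{Step 1: common cutoff.} The indicator $\1_{\{pG_{n,p}\le nt\}}$ equals $I_{A_{\floor{nt}}}(p)$, which involves only the original variables $\xi_j$ with $j\le n$. So it does not depend on the enlargement used to define $G_{n,p}$. Fix $m$ and put $K_m:=\sqrt{N_{m+1}}$. For $0\le u\le N_{m+1}$, define
\[
 \Phi_m(u):=\sum_{K_m<p\le N_{m+1}}\log p\, I_{A_{\floor u}}(p).
\]
For $N_m\le n\le N_{m+1}$ and $t\in[0,1]$, $S_n(t)$ and $\Phi_m(nt)$ differ only through primes in $(\sqrt n,K_m]$. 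The Chebyshev bound controls this uniformly by $C\sqrt{N_{m+1}}=o(a_{N_m})$, deterministically, and the same bound holds for the means. Hence, up to $o(a_{N_m})$,
\[
 C_n(t)-C_{N_m}(t)=\overline\Phi_m(nt)-\overline\Phi_m(N_mt),\qquad \overline\Phi_m:=\Phi_m-\E\Phi_m .
\]
Here $|nt-N_mt|\le N_{m+1}-N_m\le 2\rho N_{m+1}$ for large $m$. Therefore
\[
 \max_{N_m\le n\le N_{m+1}}\norm{C_n-C_{N_m}}_\infty\le \sup_{\substack{0\le s\le s'\le1\\ s'-s\le2\rho}}\bigl|\overline\Phi_m(N_{m+1}s')-\overline\Phi_m(N_{m+1}s)\bigr|+o(a_{N_m}).
\]

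\emph{Step 2: modulus of the rescaled process.} Write $N=N_{m+1}$. The process $s\mapsto\overline\Phi_m(Ns)/\sqrt{N\log N}$ is exactly $U_N$ from \eqref{eq:Un-empirical}: its prime range is $(\sqrt N,N]$ and its marks are the geometric $G_{N,p}$. So the right side above equals $\sqrt{N\log N}\,\Omega_N(2\rho)$. Apply \eqref{eq:MDP-modulus-bound} with $b_N=\sqrt{2\log\log N}$ (admissible by \eqref{eq:MDP-scale}), $\delta=2\rho$ and fixed $\eta>0$. Since $b_N\sqrt{N\log N}=a_N$, for large $m$ this gives
\[
 \Prob\bigl(\sqrt{N\log N}\,\Omega_N(2\rho)>\eta a_N\bigr)\le\exp\Bigl\{-\frac{c\eta^2}{4\rho}\,2\log\log N\Bigr\}=(\log N_{m+1})^{-c\eta^2/(2\rho)} .
 \]
Since $\log N_{m+1}\sim(m+1)\log(1+\rho)$, the bound is summable in $m$ as soon as $c\eta^2/(2\rho)>1$. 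For such $\rho$, Borel--Cantelli gives almost surely
\[
 \limsup_m\max_{N_m\le n\le N_{m+1}}\frac{\norm{C_n-C_{N_m}}_\infty}{a_{N_m}}\le\eta\limsup_m\frac{a_{N_{m+1}}}{a_{N_m}}=\eta\sqrt{1+\rho}.
\]

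\emph{Step 3: intersection over countably many parameters.} Take $\eta=1/k$ with $k\in\N$, and for each $k$ all rational $\rho<\min\{1/4,\,c/(2k^2)\}$. The resulting countable intersection of probability-one events works for every rational $\rho$ below $c/(2k^2)$ simultaneously. On it, the double limit in \eqref{eq:LIL-geometric-interpolation} is at most $2/k$ for every $k$, hence equals $0$.

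\emph{Main obstacle.} The delicate point is making sure \eqref{eq:MDP-modulus-bound} really delivers a rate of the form $c\eta^2/\delta$ with $c$ independent of $\delta$, valid for each fixed $\delta$ at the scale $b_N=\sqrt{2\log\log N}$, and usable along the subsequence $N_{m+1}$. I would recheck that the constants in Proposition~\ref{prop:local-threshold} and in \eqref{eq:Rnj-tail} are uniform. The union over the $M\le\delta^{-1}+1$ cells only costs a factor $M$, which is harmless against a power of $\log N$. If that constant turned out $\delta$-dependent, I would instead apply \eqref{eq:Rnj-tail} directly to $\Phi_m$, with $V_{n,j}\le 2c_\theta\cdot2\rho$ from Lemma~\ref{lem:local-variance}.
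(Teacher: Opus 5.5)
Your proof is correct, and it takes a shorter route than the paper's. The paper builds common block marks $G_{m,p}$ and the common-cutoff process $\widetilde C_{m,n}$, then splits $\widetilde C_{m,n}-\widetilde C_{m,N_m}$ into two terms, each handled by its own estimate. The shared-prime term (primes in $(\sqrt N,N_m]$, windows $(N_mt/N,nt/N]$) is handled by re-running the grid argument with Proposition~\ref{prop:local-threshold}. The new-prime term (primes in $(N_m,n]$, reduced to partial sums of $\log p\,(\xi_p-\theta)$) is handled by the maximal Bernstein inequality of Lemma~\ref{lem:maximal-Bernstein}. Your key observation is that, up to the deterministic $O(\sqrt N)$ cutoff error, $C_n(t)=C_N(nt/N)$ for every $n$ in the block with $N=N_{m+1}$, because primes in $(n,N]$ contribute nothing at times $u\le n$. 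In fact the paper's $\widetilde C_{m,n}(t)$ is exactly $C_N(nt/N)$, so its split is not needed: the new primes are simply primes of row $N$ whose first selected multiple falls in the time window. The whole block oscillation is then the time modulus $\sqrt{N\log N}\,\Omega_N(2\rho)$ of a single row, and \eqref{eq:MDP-modulus-bound} at $b_N=\sqrt{2\log\log N}$ gives the summable bound along $N_{m+1}$ directly. The obstacle you flagged is not a problem. In \eqref{eq:Rnj-tail} the numerator is $b_N^2\eta^2/64$ and the denominator is at most $C_T\{2c_\theta\delta+o(1)\}$, since $b_N\beta_N\to0$. The union over the $M$ cells costs only $O(b_N^{-2}\log M)$ on the rate scale, so the constant $c$ is indeed independent of $\delta$ and $\eta$. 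The paper's version works at block level with explicit $\log\log N_m$ tails and does not have to extract a $\delta$-uniform rate from the MDP section. Yours reuses an estimate already proved, avoids the three-cell bookkeeping and Lemma~\ref{lem:maximal-Bernstein}, and shows that the geometric-block interpolation is just MDP exponential equicontinuity at mesh $\delta\asymp\rho$.
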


\begin{proof}
Fix a rational $\rho\in(0,1/4)$ and write $M=N_m$, $N=N_{m+1}$.  For all large $m$,
$1\le N/M\le1+2\rho<2$.  For every prime $p>\sqrt N$, use the Bernoulli variables $\xi_p,\xi_{2p},\ldots,\xi_{\lfloor N/p\rfloor p}$ and independent auxiliary tails to construct a common geometric mark $G_{m,p}$.  Since $pq>N$ for distinct primes $p,q>\sqrt N$, these marks are independent, and for every $M\le n\le N$,
\[
 I_{A_{\lfloor nt\rfloor}}(p)=\1_{\{pG_{m,p}\le nt\}},\qquad 0\le t\le1.
\]
Define the common-cutoff centered process
\[
 \widetilde C_{m,n}(t)
 :=\sum_{\sqrt N<p\le n}\log p
 \left(\1_{\{pG_{m,p}\le nt\}}
 -\Prob(pG_{m,p}\le nt)\right).
\]
The only primes occurring in $C_n-\widetilde C_{m,n}$ lie in $(\sqrt n,\sqrt N]$.  Hence, deterministically,
\begin{equation}\label{eq:LIL-common-cutoff-error}
 \sup_{M\le n\le N}
 \norm{C_n-\widetilde C_{m,n}}_\infty
 \le2\vartheta(\sqrt N)=O(\sqrt N)=o(a_M).
\end{equation}

We decompose
\[
 \widetilde C_{m,n}-\widetilde C_{m,M}
 =U_{m,n}+V_{m,n},
\]
where the shared-prime term is
\begin{align*}
 U_{m,n}(t)
 :=\sum_{\sqrt N<p\le M}\log p
 \left(\1_{\{Mt<pG_{m,p}\le nt\}}-\Prob(Mt<pG_{m,p}\le nt)\right),
\end{align*}
and the new-prime term is
\[
 V_{m,n}(t)
 :=\sum_{M<p\le n}\log p
 \left(\1_{\{pG_{m,p}\le nt\}}
 -\Prob(pG_{m,p}\le nt)\right).
\]

For $U_{m,n}$, set $T_p=pG_{m,p}/N$ and
$c_p=(\log p)/\sqrt{N\log N}$.  Use the regular grid
$r_j=(j\rho)\wedge1$ and, for $J_j=(r_{j-1},r_j]$, define
\[
 R_{m,j}:=\sup_{u\in J_j}
 \bigg|\sum_{\sqrt N<p\le M}c_p
 \left(\1_{\{r_{j-1}<T_p\le u\}}
 -\Prob(r_{j-1}<T_p\le u)\right)\bigg|.
\]
Every interval $(Mt/N,nt/N]$ has length at most
$(N-M)/N\le2\rho$.  Such an interval meets at most three consecutive grid cells, and its centered sum is bounded by a universal constant times $\max_jR_{m,j}$.  The variance proxy of $R_{m,j}$ satisfies, uniformly in $j$,
\begin{equation}\label{eq:LIL-shared-variance}
\begin{aligned}
 \mathbb{V}_{m,j}
 &:=\frac1{N\log N}\sum_{\sqrt N<p\le M}
 (\log p)^2\Prob(T_p\in J_j)\\
 &\le C\rho+o(1),
\end{aligned}
\end{equation}
because the sum is bounded by the corresponding full-row sum in
Lemma~\ref{lem:local-variance}.  Moreover,
$\max_pc_p\le\sqrt{\log N/N}$.  Proposition~\ref{prop:local-threshold} gives
$\max_j\E R_{m,j}\le C\sqrt\rho+o(1)$.  Since
\[
 \frac{a_M}{\sqrt{N\log N}}
 =\left(\frac{2M\log M\log\log M}{N\log N}\right)^{1/2}
 \ge c\sqrt{\log\log M},
\]
the simplified concentration inequality
\eqref{eq:local-threshold-Bernstein}, followed by a union bound over
$O(\rho^{-1})$ grid cells, yields constants $c,C>0$ such that, for every
fixed $u>0$, all sufficiently small $\rho$, and all large $m$,
\begin{equation}\label{eq:LIL-shared-tail}
 \Prob\bigg(
 \max_{M\le n\le N}\norm{U_{m,n}}_\infty>ua_M
 \bigg)
 \le C\rho^{-1}
 \exp\bigg\{-\frac{cu^2}{\rho}\log\log M\bigg\}.
\end{equation}

For $V_{m,n}$, the inequality $N<2M$ implies that a prime $p\in(M,N]$ has no multiple other than $p$ in $[1,N]$.  Thus
\[
 V_{m,n}(t)=\sum_{M<p\le nt}\log p\,(\xi_p-\theta).
\]
After ordering the primes in $(M,N]$, the supremum over $n$ and $t$ is bounded by the maximum of the corresponding partial sums.  The prime number theorem gives, for fixed $\rho$ and all large $m$,
\[
 \sum_{M<p\le N}(\log p)^2\Var(\xi_p)
 \le C\rho N\log N.
\]
The maximal Bernstein inequality in Lemma~\ref{lem:maximal-Bernstein} consequently yields
\begin{equation}\label{eq:LIL-new-prime-tail}
 \Prob\bigg(
 \max_{M\le n\le N}\norm{V_{m,n}}_\infty>ua_M
 \bigg)
 \le2\exp\bigg\{-\frac{cu^2}{\rho}\log\log M\bigg\}
\end{equation}
for all large $m$.

Since $\log\log N_m=\log m+O_\rho(1)$, the right hand sides of
\eqref{eq:LIL-shared-tail} and \eqref{eq:LIL-new-prime-tail} are summable in $m$ whenever the exponent $cu^2/\rho$ is larger than $1$.  We keep a margin and require it to be larger than $2$.  Choose a numerical constant $K$ so large that $cK^2>2$, and apply Borel--Cantelli with $u=K\sqrt\rho$ for rational $\rho$.  On a common probability one event,
\[
 \limsup_{m\to\infty}
 \max_{N_m\le n\le N_{m+1}}
 \frac{\norm{\widetilde C_{m,n}-\widetilde C_{m,N_m}}_\infty}{a_{N_m}}
 \le K\sqrt\rho
 \qquad(\rho\in\mathbb Q\cap(0,1/4)).
\]
Together with \eqref{eq:LIL-common-cutoff-error}, this bound tends to zero as rational $\rho\downarrow0$, proving \eqref{eq:LIL-geometric-interpolation}.
\end{proof}

\medskip
\subsection{Upper cluster set inclusion}

We first prove
\begin{equation}\label{eq:LIL-upper-goal}
 \operatorname{dist}_\infty(\widehat\Xi_n^S,\mathcal K_\theta)
 \longrightarrow0\qquad\text{a.s.}
\end{equation}
Fix $\varepsilon>0$ and set
\[
 F_\varepsilon:=\{f\in C[0,1]:
 \operatorname{dist}_\infty(f,\mathcal K_\theta)\ge\varepsilon\}.
\]
Since $J_\theta$ is good and $\mathcal K_\theta=\{J_\theta\le1/2\}$,
\begin{equation}\label{eq:LIL-rate-gap}
 \inf_{f\in F_\varepsilon}J_\theta(f)>\frac12.
\end{equation}
Indeed, otherwise a sequence in $F_\varepsilon$ with rates tending to at most $1/2$ would have, by compactness of a slightly larger level set, a limit belonging simultaneously to $F_\varepsilon$ and $\mathcal K_\theta$.

Fix rational $\rho\in(0,1/4)$ and let $N_m=\lfloor(1+\rho)^m\rfloor$.  The MDP upper bound with $b_n=\sqrt{2\log\log n}$ and \eqref{eq:LIL-rate-gap} gives some $\eta>0$ such that, for all large $m$,
\begin{equation}\label{eq:LIL-upper-subsequence-summable}
 \Prob(\widehat\Xi_{N_m}^{S}\in F_\varepsilon)
 \le\exp\{-(1+\eta)\log\log N_m\}.
\end{equation}
The right-hand side is summable because $\log\log N_m=\log m+O_\rho(1)$.  Thus
\begin{equation}\label{eq:LIL-upper-on-grid}
 \operatorname{dist}_\infty(
 \widehat\Xi_{N_m}^{S},\mathcal K_\theta)<\varepsilon
 \quad\text{eventually a.s.}
\end{equation}
In particular, $(\widehat\Xi_{N_m}^S)$ is eventually bounded in the uniform norm.

For $N_m\le n\le N_{m+1}$,
\begin{align*}
 \widehat\Xi_n^S-\widehat\Xi_{N_m}^S
 ={}&\frac{\widehat C_n-\widehat C_{N_m}}{a_n}
 +\left(\frac{a_{N_m}}{a_n}-1\right)
 \widehat\Xi_{N_m}^S,
\end{align*}
where hats denote polygonal interpolation.  The interpolation errors are $o(1)$ uniformly on the block, Lemma~\ref{lem:LIL-geometric-interpolation} controls the first term, and
\begin{equation}\label{eq:LIL-normalization-ratio}
 \sup_{N_m\le n\le N_{m+1}}
 \Big|\frac{a_n}{a_{N_m}}-1\Big|
 \le C\rho+o(1).
\end{equation}
Combining these facts with \eqref{eq:LIL-upper-on-grid}, and then letting rational $\rho\downarrow0$ and positive rational $\varepsilon\downarrow0$, proves \eqref{eq:LIL-upper-goal} on a single probability one event.

The convergence of the distance to the compact set $\mathcal K_\theta$ also implies relative compactness.  Indeed, choose $g_n\in\mathcal K_\theta$ with
$\|\widehat\Xi_n^S-g_n\|_\infty\le
\operatorname{dist}_\infty(\widehat\Xi_n^S,\mathcal K_\theta)+1/n$.  Every subsequence of $(g_n)$ has a convergent further subsequence, and the corresponding further subsequence of $(\widehat\Xi_n^S)$ has the same limit.

For the lower bound we use independent lacunary blocks, whose moderate deviations have the same rate.

\begin{lemma}\label{lem:LIL-block-MDP}
Let $\alpha>1$, $n_m=\lfloor e^{m^\alpha}\rfloor$, and
$b_m=\sqrt{2\log\log n_m}$.  For all sufficiently large $m$, define
\begin{equation}\label{eq:LIL-block-process}
 B_m(t):=\frac1{a_{n_m}}
 \sum_{n_{m-1}<p\le n_m}\log p
 \left(
 I_{A_{\lfloor n_mt\rfloor}}(p)
 -\E I_{A_{\lfloor n_mt\rfloor}}(p)
 \right).
\end{equation}
Then the polygonal processes $(\widehat B_m)$ satisfy an LDP in $C[0,1]$ with speed $b_m^2$ and rate $J_\theta$.  Moreover, the processes $(B_m)$ are independent.
\end{lemma}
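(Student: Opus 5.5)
The plan has two parts: independence first, then a reduction of the block process to the full large prime process $Z_{n_m}$, whose functional MDP was proved in Section~\ref{sec:MDP}.

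\emph{Independence.} For $n_{m-1}<p\le n_m$, the indicator $I_{A_{\lfloor n_mt\rfloor}}(p)$ depends only on $\xi_{kp}$ with $kp\le n_m$. Once $m$ is large we have $n_{m-1}>\sqrt{n_m}$, because $(m-1)^\alpha>m^\alpha/2$. So $B_m$ is measurable with respect to $\{\xi_{kp}: n_{m-1}<p\le n_m,\ kp\le n_m\}$. Take $m<m'$ and primes $p\in(n_{m-1},n_m]$, $p'\in(n_{m'-1},n_{m'}]$. Then $p<p'$ are distinct primes exceeding $\sqrt{n_{m'}}$, so a common index $kp=k'p'\le n_{m'}$ would be divisible by $pp'>n_{m'}$, which is impossible. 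Within a single block the index sets are disjoint for the same reason. Hence the families of variables feeding different $B_m$ are pairwise disjoint subfamilies of the independent sequence $(\xi_j)$, and the $B_m$ are independent.

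\emph{MDP.} I would compare $B_m$ with $\Xi^S_{n_m}=Z_{n_m}$, taken with $b=\sqrt{2\log\log n_m}$, so that $a_{n_m}=b_m\sqrt{n_m\log n_m}$. The difference $Z_{n_m}-B_m$ is the normalized centered sum over primes in $(\sqrt{n_m},n_{m-1}]$. Deterministically this is at most $2\vartheta(n_{m-1})/a_{n_m}\le Cn_{m-1}/a_{n_m}$, and
\[
 \frac{n_{m-1}}{n_m}\le e^{(m-1)^\alpha-m^\alpha}+o(1)\le e^{-\alpha (m-1)^{\alpha-1}}\to0,
\]
while $a_{n_m}\ge\sqrt{n_m}$. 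So the sup norm of the difference is $O(\sqrt{n_m}\,e^{-\alpha(m-1)^{\alpha-1}})/\sqrt{\log n_m\log\log n_m}$, which may not be small. This crude bound therefore has to be refined by the scale $n_{m-1}/\sqrt{n_m\log n_m\log\log n_m}$, which tends to $0$ only when $n_{m-1}\ll\sqrt{n_m}$, and that fails here.

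I would therefore use a probabilistic estimate in place of the deterministic one. The difference $D_m:=Z_{n_m}-B_m$ has the form \eqref{eq:Un-empirical}, with primes restricted to $(\sqrt{n_m},n_{m-1}]$, divided by $b_m$. Its variance proxy satisfies
\[
 \frac{1}{n_m\log n_m}\sum_{p\le n_{m-1}}(\log p)^2\le C\frac{n_{m-1}}{n_m}\to0 .
\]
Proposition~\ref{prop:local-threshold}, applied on the single cell $[0,1]$, then gives $\E\|b_mD_m\|_\infty\le C\sqrt{n_{m-1}/n_m}\to0$. The concentration inequality \eqref{eq:local-threshold-Bernstein}, with $x=\eta b_m$, yields
\[
 \Prob(\|D_m\|_\infty>\eta)\le C\exp\{-c\eta^2b_m^2/(V_m+\beta\eta b_m)\},
\]
where $V_m\to0$ and $\beta b_m\to0$. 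Thus $b_m^{-2}\log\Prob(\|D_m\|_\infty>\eta)\to-\infty$, so $B_m$ and $Z_{n_m}$ are exponentially equivalent at speed $b_m^2$; the same holds for the polygonal versions, since jumps are $O(\beta_{n_m}/b_m)$ as in \eqref{eq:Zn-interpolation}. Theorem~\ref{thm:FMDP} (its large prime version for $\widehat Z_n$), applied along the subsequence $n_m$, then gives the LDP for $\widehat B_m$ with rate $J_\theta$. The scale condition \eqref{eq:MDP-scale} holds because $b_m\sqrt{\log n_m/n_m}\to0$, and Dembo--Zeitouni Theorem~4.2.13 transfers the LDP.

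The main obstacle is exactly this comparison step. The deterministic $\vartheta$ bound is insufficient, so it is replaced by the exponential tail estimate for the omitted primes from Appendix~\ref{sec:appendix-local}. The key input there is that the variance proxy carries the factor $n_{m-1}/n_m\to0$.
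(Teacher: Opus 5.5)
\emph{Verdict:} your proof is correct, but it reaches the MDP by a different route from the paper.

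\emph{The paper's route.} The paper proves the block MDP directly by repeating the proof of Theorem~\ref{thm:FMDP} with the prime range $(n_{m-1},n_m]$. Three checks are needed. The quadratic term differs from the full-row covariance sum by at most $Cr_m$, where $r_m=n_{m-1}/n_m\to0$. The cubic remainder is as in \eqref{eq:MDP-cubic-remainder}. Deleting primes only lowers the local variance proxies, so the grid argument for exponential equicontinuity carries over unchanged.

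\emph{Your route.} You show that $\widehat B_m$ and $\widehat Z_{n_m}$ are exponentially equivalent at speed $b_m^2$, using $b=\sqrt{2\log\log n_m}$ so that $b_m\sqrt{n_m\log n_m}=a_{n_m}$. To do this you apply \eqref{eq:local-threshold-Bernstein} on the single cell $(0,1]$ to the old primes in $(\sqrt{n_m},n_{m-1}]$. The variance proxy is $O(r_m)\to0$ and $\beta_{n_m}b_m\to0$, so the logarithm of the tail probability divided by $b_m^2$ tends to $-\infty$. Then the LDP for $\widehat Z_n$ with $b_n=\sqrt{2\log\log n}$, restricted to the subsequence $(n_m)$, transfers by \cite[Theorem~4.2.13]{DZ}.

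\emph{What each route buys.} Your argument is more modular: the MDP is used as a black box, and nothing in its proof has to be re-verified for a moving lower cutoff. It also shows that this lemma and Lemma~\ref{lem:LIL-old-primes} rest on one and the same estimate. The Borel--Cantelli bound there is your tail bound with $b_m^2=2\log\log n_m\sim 2\alpha\log m$, which is superpolynomially small in $m$. The paper's route keeps the block process self-contained, needing only that the omitted primes contribute $O(r_m)$ to the covariances.

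\emph{A simplification.} For the polygonal versions you do not need the jump bound. Both processes are interpolated on the same grid $j/n_m$, so $\|\widehat B_m-\widehat Z_{n_m}\|_\infty\le\|B_m-Z_{n_m}\|_\infty$.

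\emph{A small slip in the independence argument.} The claim that $p<p'$ are both primes exceeding $\sqrt{n_{m'}}$ is false once $m'$ is large compared with $m$. For instance, $\sqrt{n_{2m}}>n_m\ge p$ for large $m$, because $\alpha>1$. The conclusion is still immediate. Every index $kp\le n_m$ with $p>n_{m-1}$ lies in $(n_{m-1},n_m]$, so different blocks draw on disjoint integer intervals. Equivalently, $k'p'\ge p'>n_{m'-1}\ge n_m\ge kp$.
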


\begin{proof}
Put $r_m=n_{m-1}/n_m$.  Then $r_m\to0$, and, for all large $m$,
$n_{m-1}^2>n_m$.  Every multiple not exceeding $n_m$ of a prime
$p>n_{m-1}$ belongs to the integer block $(n_{m-1},n_m]$.  Distinct primes in $(n_{m-1},n_m]$ have disjoint multiple sets up to $n_m$, because their product exceeds $n_m$.  The Bernoulli variables used by different values of $m$ lie in disjoint integer blocks.  This proves independence, and it also provides independent geometric marks within every block.

The proof of Theorem~\ref{thm:FMDP} can be repeated with the prime range
$(n_{m-1},n_m]$ in place of $(\sqrt{n_m},n_m]$.  We indicate the points at which the moving lower cutoff must be checked.  For fixed
$0\le t_1<\cdots<t_d\le1$ and $\lambda\in\R^d$, the centered logarithmic moment generating function has the same Taylor expansion as
\eqref{eq:cumulant-expansion}.  Its quadratic term is determined by
\[
 \frac1{n_m\log n_m}
 \sum_{n_{m-1}<p\le n_m}(\log p)^2
 \Cov\!\left(
 \1_{\{(p/n_m)G\le t_i\}},
 \1_{\{(p/n_m)G\le t_j\}}
 \right).
\]
The corresponding full-row sum converges to $C_\theta(t_i,t_j)$ by
Lemma~\ref{lem:mean-covariance}.  The omitted part is bounded uniformly in
$i,j$ by
\[
 \frac{C}{n_m\log n_m}
 \sum_{p\le n_{m-1}}(\log p)^2
 \le C\frac{n_{m-1}\log n_{m-1}}
 {n_m\log n_m}
 \le Cr_m\longrightarrow0.
\]
The cubic remainder is bounded, after division by $b_m^2$, by
\[
 Cb_m\sqrt{\frac{\log n_m}{n_m}}\longrightarrow0,
\]
exactly as in \eqref{eq:MDP-cubic-remainder}.  Hence the finite-dimensional
cumulants converge to the quadratic form in \eqref{eq:MDP-fdd-limit}.

For exponential equicontinuity, deleting primes less than $n_{m-1}$ can only
decrease every local variance proxy.  The maximal normalized weight is at
most $\sqrt{\log n_m/n_m}$ and therefore satisfies
$b_m\sqrt{\log n_m/n_m}\to0$.  The grid argument and
Proposition~\ref{prop:local-threshold} used in
\eqref{eq:MDP-modulus-bound} consequently apply without change.  The
finite-dimensional MDPs and exponential equicontinuity give the asserted
functional MDP by the same process-level theorem as in
Section~\ref{sec:MDP}.
\end{proof}

The next estimate shows that the old-prime contribution omitted from a lacunary block is almost surely negligible.

\begin{lemma}\label{lem:LIL-old-primes}
With $n_m$ as in Lemma~\ref{lem:LIL-block-MDP},
\begin{equation}\label{eq:LIL-old-primes}
 \norm{\widehat\Xi_{n_m}^S-\widehat B_m}_\infty
 \longrightarrow0\qquad\text{a.s.}
\end{equation}
\end{lemma}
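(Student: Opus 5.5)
The difference $\widehat\Xi_{n_m}^S-\widehat B_m$ is, up to polygonal interpolation errors, the normalized centered contribution of the old primes $\sqrt{n_m}<p\le n_{m-1}$. So I would show that this contribution is a.s. $o(a_{n_m})$ uniformly in $t$.

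\emph{Interpolation.} Both polygonal interpolations differ from the step processes by at most $2\max_p\log p/a_{n_m}\to0$, by the single-jump argument preceding \eqref{eq:Zn-interpolation}. It therefore suffices to treat the step processes. Writing $n=n_m$, and using \eqref{eq:indicator-geometric} with the geometric marks $G_{n,p}$, we get
\[
 a_n\bigl(\Xi_n^S(t)-B_m(t)\bigr)
 =\sum_{\sqrt n<p\le n_{m-1}}\log p\left(\1_{\{pG_{n,p}\le nt\}}-\Prob(pG_{n,p}\le nt)\right)=:O_m(t).
\]
This is a centered weighted threshold process of the form \eqref{eq:Un-empirical}, restricted to primes $p\le n_{m-1}$.

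\emph{Tail bound for each $m$.} Rescale by $\sqrt{n\log n}$, i.e.\ take weights $c_{n,p}$ as in \eqref{eq:weights-and-times}. The key observation is that the total variance proxy is tiny:
\[
 V^{\mathrm{old}}_m:=\frac1{n\log n}\sum_{p\le n_{m-1}}(\log p)^2\le C\frac{n_{m-1}\log n_{m-1}}{n_m\log n_m}\le Cr_m,
\]
where $r_m=n_{m-1}/n_m\to0$, and indeed $r_m\le\exp\{-c m^{\alpha-1}\}$. I would apply Proposition~\ref{prop:local-threshold} to the whole interval $[0,1]$ as a single cell, with base point $0$. This gives $\E\sup_t|O_m(t)|/\sqrt{n\log n}\le4\sqrt{V^{\mathrm{old}}_m}\to0$. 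The Bernstein-type concentration \eqref{eq:local-threshold-Bernstein}, taken at level $x=u\sqrt{2\log\log n_m}$ with maximal weight $\beta_n\le\sqrt{\log n/n}$, then yields
\[
 \Prob\Bigl(\norm{O_m}_\infty>u\,a_{n_m}\Bigr)\le C\exp\Bigl\{-\frac{c\,u^2\log\log n_m}{V^{\mathrm{old}}_m+o(1)}\Bigr\}.
\]
Here $o(1)$ collects the terms $\beta_n\E R$ and $x\beta_n$, both of which are $O(\sqrt{\log\log n\cdot\log n/n})$.

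\emph{Summability.} Since $\log\log n_m=\alpha\log m+o(1)$ and the denominator tends to $0$, the exponent eventually exceeds $2\log m$ for each fixed $u>0$. The bound is therefore summable in $m$. Borel--Cantelli along rational $u\downarrow0$ gives $\norm{O_m}_\infty/a_{n_m}\to0$ a.s., which is \eqref{eq:LIL-old-primes}.

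The main obstacle is the concentration step. I need to be sure that Proposition~\ref{prop:local-threshold} applies with a cell equal to the full interval, and that its Bernstein form gives the Gaussian-type exponent $x^2/V$ rather than only $x/\beta_n$. That form is exactly what \eqref{eq:Rnj-tail} extracts, so I would reuse that computation verbatim. As a fallback, one could use the grid argument of Section~\ref{sec:MDP} with a fixed mesh, since each cell proxy is bounded by $V^{\mathrm{old}}_m$. Alternatively, a simple Chebyshev bound with fourth moments would give probability $O(r_m^2/u^4)$, which is summable since $r_m$ decays superpolynomially. The supremum over $t$ would then be handled by the monotone bracketing of the uncentered sums on a grid of mesh $\varepsilon$.
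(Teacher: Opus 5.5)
Your proposal is correct and is essentially the paper's proof: the paper also isolates the centered contribution of the primes $\sqrt{n_m}<p\le n_{m-1}$, applies Proposition~\ref{prop:local-threshold} on the single cell $J=(0,1]$, and uses the Bernstein form \eqref{eq:local-threshold-Bernstein} after absorbing $\E R_J\to0$ and the $O(\beta)$ interpolation error, then concludes by Borel--Cantelli over rational $u>0$; the only normalization difference is that the paper takes weights $\log p/a_{n_m}$, so its variance proxy is your $V^{\mathrm{old}}_m$ divided by $2\log\log n_m$. Your observation that summability needs only $r_m\to0$ together with $\log\log n_m\sim\alpha\log m$ is slightly sharper than the paper's appeal to the growth rate of $n_m/n_{m-1}$, and the fallback arguments you list are not needed.
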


\begin{proof}
For all large $m$, $n_{m-1}>\sqrt{n_m}$, and the difference in
\eqref{eq:LIL-old-primes} is the centered contribution of primes
$\sqrt{n_m}<p\le n_{m-1}$, divided by $a_{n_m}$.  These prime indicators are independent within the row $n_m$.  Proposition~\ref{prop:local-threshold}, with the whole interval $J=(0,1]$, applies with weights
$c_{m,p}=\log p/a_{n_m}$.  Its variance proxy satisfies
\begin{align}
 V_m
 &\le\frac1{a_{n_m}^2}
 \sum_{p\le n_{m-1}}(\log p)^2
 \nonumber\\
 &\le C\frac{n_{m-1}\log n_{m-1}}
 {n_m\log n_m\log\log n_m}
 \le C\frac{r_m}{\log\log n_m},
 \label{eq:LIL-old-variance}
\end{align}
and the maximal weight is
\[
 \beta_m\le\frac{\log n_m}{a_{n_m}}
 =O\left(\sqrt{\frac{\log n_m}
 {n_m\log\log n_m}}\right).
\]
Proposition~\ref{prop:local-threshold} therefore bounds the expected supremum of the step process by $O(\sqrt{V_m})=o(1)$.  Passing to polygonal interpolation changes the supremum by at most $2\beta_m=o(1)$.  Fix $u>0$.  For all sufficiently large $m$, the expectation and the interpolation error are both at most $u/4$.  Applying \eqref{eq:local-threshold-Bernstein} with deviation level $u/2$ then gives
\[
 \Prob\left(
 \norm{\widehat\Xi_{n_m}^S-\widehat B_m}_\infty>u
 \right)
 \le C\exp\bigg\{-\frac{cu^2}{V_m+u\beta_m}\bigg\}.
\]
Since
$n_m/n_{m-1}=\exp\{\alpha m^{\alpha-1}+o(m^{\alpha-1})\}$, the last probabilities are summable in $m$.  Borel--Cantelli, first for rational $u>0$ and then by monotonicity, proves \eqref{eq:LIL-old-primes}.
\end{proof}

\medskip
\subsection{Lower cluster set inclusion}

Let $f\in\cH_\theta$ with $\|f\|_{\cH_\theta}<1$.  Choose
\begin{equation}\label{eq:LIL-alpha-choice}
 1<\alpha<\frac1{\|f\|_{\cH_\theta}^2}
\end{equation}
and set $n_m=\lfloor e^{m^\alpha}\rfloor$.  By the MDP lower bound in Lemma~\ref{lem:LIL-block-MDP}, for every $\varepsilon>0$ and every $\delta>0$, all sufficiently large $m$ satisfy
\begin{align}
 \Prob\left(\|\widehat B_m-f\|_\infty<\varepsilon\right)
 &\ge\exp\left\{-b_m^2
 \left(\frac12\|f\|_{\cH_\theta}^2+\delta\right)\right\}
 \notag\\
 &=m^{-\alpha(\|f\|_{\cH_\theta}^2+2\delta)+o(1)}.
 \label{eq:LIL-block-lower-probability}
\end{align}
Choose $\delta$ so small that
$\alpha(\|f\|_{\cH_\theta}^2+2\delta)<1$.  The probabilities in
\eqref{eq:LIL-block-lower-probability} then have a divergent sum.  The events are independent by Lemma~\ref{lem:LIL-block-MDP}, so the second Borel--Cantelli lemma gives
\[
 \|\widehat B_m-f\|_\infty<\varepsilon
 \quad\text{infinitely often, a.s.}
\]
Lemma~\ref{lem:LIL-old-primes} transfers this statement to
$\widehat\Xi_{n_m}^S$.  Taking a countable dense subset of the open unit ball of $\cH_\theta$ and rational $\varepsilon\downarrow0$ shows, on one event of probability one, that every point of that open ball is a cluster point.  Since scalar contractions $rf$, $0<r<1$, converge uniformly to $f$ for every $f\in\mathcal K_\theta$, the closure of the open unit ball in the uniform norm is $\mathcal K_\theta$.  Together with the upper inclusion and relative compactness, this proves that the cluster set is exactly $\mathcal K_\theta$.

\medskip
\subsection{Endpoint and linear functional consequences}

The endpoint evaluation map $e_1(f)=f(1)$ is continuous on $C[0,1]$.  The image of $\mathcal K_\theta$ under $e_1$ is
\[
 [-\|C_\theta(1,\cdot)\|_{\cH_\theta},
 \|C_\theta(1,\cdot)\|_{\cH_\theta}]
 =[-\sigma_\theta,\sigma_\theta],
\]
because $\|C_\theta(1,\cdot)\|_{\cH_\theta}^2=C_\theta(1,1)=\sigma_\theta^2$.  This proves \eqref{eq:endpoint-LIL-limsup} and \eqref{eq:endpoint-LIL-liminf}.  The same argument for an arbitrary continuous linear functional $\ell$ proves \eqref{eq:linear-functional-LIL}.  Finally, \eqref{eq:LIL-deterministic-reduction} transfers the conclusions from the large prime process to the original least common multiple process, and the vanishing interpolation error gives the same cluster set for the step processes in the $J_1$ topology.  This completes the proof of Theorem~\ref{thm:FLIL} and Corollary~\ref{cor:endpoint-LIL}.

\appendix

\section{A local maximal estimate for weighted threshold processes}\label{sec:appendix-local}

The following local maximal estimate for weighted threshold processes supplies the empirical process bound used in the proofs of the functional MDP and the functional LIL.\@  It is a standard consequence of symmetrization, Doob's maximal inequality, and Talagrand's concentration inequality for empirical processes.  The concentration estimate is the product-space inequality given in \cite[Theorem~2.3]{ArconesLD}, a form of Talagrand's inequality \cite{Tala} for suprema of sums of independent, not necessarily identically distributed, random variables.  Since the exact weighted threshold form used here is not isolated in those references, we include the proof for the reader's convenience.

\begin{proposition}\label{prop:local-threshold}
Let $T_1,\ldots,T_N$ be independent real-valued random variables, and let
$0\le c_i\le\beta$.  Fix an interval $J=(a,b]$ and define
\begin{equation}\label{eq:local-threshold-process}
 R_J:=\sup_{a\le u\le b}
 \left|\sum_{i=1}^Nc_i
 \left(
 \1_{\{a<T_i\le u\}}-\Prob(a<T_i\le u)
 \right)\right|
\end{equation}
and
\begin{equation}\label{eq:local-threshold-variance}
 V_J:=\sum_{i=1}^Nc_i^2\Prob(T_i\in J).
\end{equation}
Then
\begin{equation}\label{eq:local-threshold-expectation}
 \E R_J\le4\sqrt{V_J}.
\end{equation}
There is a universal constant $C_T<\infty$ such that, for every $x>0$,
\begin{equation}\label{eq:local-threshold-concentration}
 \Prob(R_J\ge\E R_J+x)
 \le C_T\exp\left\{
 -\frac{x}{C_T\beta}
 \log\left(1+
 \frac{x\beta}{V_J+\beta\E R_J}
 \right)
 \right\}.
\end{equation}
If $\beta=0$, both assertions hold trivially.  In particular, after changing the universal constant if necessary,
\begin{equation}\label{eq:local-threshold-Bernstein}
 \Prob(R_J\ge\E R_J+x)
 \le C_T\exp\bigg\{
 -\frac{x^2}{C_T\{V_J+\beta\E R_J+x\beta\}}
 \bigg\}.
\end{equation}
\end{proposition}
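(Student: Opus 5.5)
The plan is to treat $R_J$ as the supremum of a centered empirical process indexed by $u\in[a,b]$, and to prove the two assertions separately.

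\emph{Expectation bound.} Set
\[
M(u):=\sum_{i=1}^N c_i\bigl(\1_{\{a<T_i\le u\}}-\Prob(a<T_i\le u)\bigr),\qquad a\le u\le b .
\]
The supremum over $u$ may be restricted to a countable dense set together with right limits, because $M$ is right-continuous with finitely many jumps; this settles measurability. Doob's inequality cannot be applied in $u$ directly, since $M(\cdot)$ is not a martingale in $u$. We symmetrize instead: with independent Rademacher signs $\varepsilon_i$, the standard symmetrization inequality gives
\[
\E R_J\le 2\,\E\sup_u\Bigl|\sum_i\varepsilon_i c_i\1_{\{a<T_i\le u\}}\Bigr|.
\]
Condition on the $T_i$, keep only the indices with $T_i\in J$, and order them by increasing $T_i$. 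The inner supremum becomes the maximum of partial sums of the independent symmetric variables $\varepsilon_ic_i$ taken in that order. These partial sums form a martingale in the ordering index, so Doob's $L^2$ inequality gives
\[
\E_\varepsilon\max_k\Bigl|\sum_{i\le k}\varepsilon_ic_i\Bigr|\le 2\Bigl(\sum_{T_i\in J}c_i^2\Bigr)^{1/2}.
\]
Taking expectation over the $T_i$ and applying Jensen's inequality yields $2\sqrt{V_J}$. Multiplying by the symmetrization factor $2$ gives $\E R_J\le4\sqrt{V_J}$.

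\emph{Concentration.} Apply Talagrand's inequality in the product-space form of \cite[Theorem~2.3]{ArconesLD}, with the countable class of functions
\[
f_u(t_1,\dots,t_N)=\sum_i c_i\bigl(\1_{\{a<t_i\le u\}}-\Prob(a<T_i\le u)\bigr),\qquad u\in\mathbb Q\cap[a,b]\cup\{b\}.
\]
The summands are centered, and each is bounded by $\beta$ uniformly in $u$. The weak variance satisfies
\[
\sup_u\sum_i c_i^2\Var\bigl(\1_{\{a<T_i\le u\}}\bigr)\le V_J,
\]
because $\{a<T_i\le u\}\subset\{T_i\in J\}$. The Bennett/Bousquet-type form of Talagrand's inequality uses the effective variance $\sigma^2+2\beta\E R$. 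It gives a tail bound of the form $C\exp\{-\frac{x}{C\beta}\log(1+\frac{x\beta}{V_J+\beta\E R_J})\}$, which is exactly \eqref{eq:local-threshold-concentration}. The absolute value is handled by adjoining the class $\{-f_u\}$, which is still countable.

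The Bernstein form \eqref{eq:local-threshold-Bernstein} follows from the elementary bound $\log(1+y)\ge y/(1+y)$ with $y=x\beta/(V_J+\beta\E R_J)$. Indeed,
\[
\frac{x}{\beta}\cdot\frac{y}{1+y}=\frac{x^2}{V_J+\beta\E R_J+x\beta}.
\]
The case $\beta=0$ is trivial because then every $c_i$ vanishes and $R_J=0$.

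The main obstacle is matching the precise normalization of the cited Talagrand inequality, namely the roles of the envelope $\beta$ and the variance $V_J+\beta\E R_J$. Everything else is standard. I would first check that \cite[Theorem~2.3]{ArconesLD} is stated for non-identically distributed independent coordinates with a uniform bound $\beta$ on the summands. If its variance term is the Bousquet-type $\sigma^2+2\beta\E R$, the result follows with $C_T$ absorbing the constants.
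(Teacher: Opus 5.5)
Your proposal is correct and is essentially the paper's proof: symmetrization with Rademacher signs, Doob's $L^2$ maximal inequality along the ordered $T_i\in J$, and Jensen give \eqref{eq:local-threshold-expectation}; the Talagrand-type inequality of \cite[Theorem~2.3]{ArconesLD}, with envelope $\beta$ and weak variance at most $V_J$, gives the concentration bound; and $\log(1+y)\ge y/(1+y)$ gives \eqref{eq:local-threshold-Bernstein}. One minor inaccuracy: $M$ need not have finitely many jumps, because the compensator $u\mapsto\Prob(a<T_i\le u)$ can jump at countably many atoms, but right-continuity alone already justifies restricting the supremum to a countable dense set containing $b$.
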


\begin{proof}
For the expectation bound, let $T_1',\ldots,T_N'$ be an independent copy of the sample and let $\varepsilon_1,\ldots,\varepsilon_N$ be independent Rademacher variables, independent of both samples.  The standard symmetrization argument gives
\begin{align}
 \E R_J
 &\le \E\sup_{a\le u\le b}
 \bigg|\sum_{i=1}^Nc_i
 \left(
 \1_{\{a<T_i\le u\}}-
 \1_{\{a<T_i'\le u\}}
 \right)\bigg|
 \nonumber\\
 &\le2\E\sup_{a\le u\le b}
 \bigg|\sum_{i=1}^N\varepsilon_i c_i
 \1_{\{a<T_i\le u\}}\bigg|.
 \label{eq:threshold-symmetrization}
\end{align}
We include the second step explicitly.  Conditional on the two samples, the difference in the first line has the same distribution after multiplication of each pair by an independent sign.  The triangle inequality then separates the original and copied samples, which have the same distribution.

Condition on $(T_i)_{1\le i\le N}$.  List the indices for which $T_i\in J$ in nondecreasing order of $T_i$. Ties may be ordered arbitrarily.  As $u$ increases from $a$ to $b$, the Rademacher sum in \eqref{eq:threshold-symmetrization} takes values among partial sums of the independent mean-zero variables $\varepsilon_i c_i$ in this order.  If several observations are tied, the process only visits a subset of those partial sums, which can only decrease the maximum.  Doob's $L^2$ maximal inequality therefore gives
\begin{align*}
 \E_\varepsilon\sup_{a\le u\le b}
 \bigg|\sum_{i=1}^N\varepsilon_i c_i
 \1_{\{a<T_i\le u\}}\bigg|
 &\le
 \bigg[
 \E_\varepsilon
 \max_{k}\Big|\sum_{r=1}^k\varepsilon_{i_r}c_{i_r}\Big|^2
 \bigg]^{1/2}\\
 &\le2\bigg(
 \sum_{i=1}^Nc_i^2\1_{\{T_i\in J\}}
 \bigg)^{1/2}.
\end{align*}
Taking expectation, using Jensen's inequality, and substituting into
\eqref{eq:threshold-symmetrization} yields
\[
 \E R_J
 \le4\bigg(
 \sum_{i=1}^Nc_i^2\Prob(T_i\in J)
 \bigg)^{1/2}
 =4\sqrt{V_J}.
\]

For the concentration estimate, index the centered functions
\[
 f_{i,u}(t)
 :=c_i\left(
 \1_{\{a<t\le u\}}-\Prob(a<T_i\le u)
 \right),
 \qquad a\le u\le b.
\]
They satisfy $\abs{f_{i,u}}\le c_i\le\beta$, and
\[
 \sup_{a\le u\le b}
 \sum_{i=1}^N\Var(f_{i,u}(T_i))
 \le\sum_{i=1}^Nc_i^2\Prob(T_i\in J)
 =V_J.
\]
The supremum is measurable after restricting $u$ to a countable dense set together with the atoms of the distribution functions.  Equivalently, one may use outer probability and then invoke separability of the right continuous sample paths.  Applying \cite[Theorem~2.3]{ArconesLD} to this centered class gives \eqref{eq:local-threshold-concentration}, after enlarging the universal constant $C_T$ if necessary.  The elementary inequality $\log(1+y)\ge y/(1+y)$, $y\ge0$, then gives \eqref{eq:local-threshold-Bernstein}.
\end{proof}

The LIL interpolation proof also uses the following standard maximal Bernstein inequality.  It follows from Freedman's bounded martingale inequality applied to the partial sums and to their negatives; see \cite{Freedman}.

\begin{lemma}\label{lem:maximal-Bernstein}
Let $X_1,\ldots,X_N$ be independent centered random variables satisfying $|X_i|\le b$ almost surely, and put $V:=\sum_{i=1}^N\Var(X_i)$.  Then, for every $x>0$,
\begin{equation}\label{eq:maximal-Bernstein}
 \Prob\bigg(\max_{1\le k\le N}\bigg|\sum_{i=1}^kX_i\bigg|\ge x\bigg)
 \le2\exp\bigg\{-\frac{x^2}{2(V+bx/3)}\bigg\}.
\end{equation}
\end{lemma}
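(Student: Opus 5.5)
The plan is to reduce \eqref{eq:maximal-Bernstein} to a one-sided exponential supermartingale bound and then use a union bound over the two signs. Put $S_k:=\sum_{i=1}^kX_i$ and $\mathcal F_k:=\sigma(X_1,\ldots,X_k)$. By independence and centering, $(S_k)$ is a martingale with bounded increments $|X_i|\le b$. The predictable quadratic variation is deterministic and bounded by $V$. It suffices to show
\[
 \Prob\Bigl(\max_{k\le N}S_k\ge x\Bigr)\le\exp\Bigl\{-\frac{x^2}{2(V+bx/3)}\Bigr\}.
\]
Applying the same bound to $-X_i$, which satisfy the same hypotheses, and adding the two estimates then gives the factor $2$.

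For the one-sided bound, fix $\lambda>0$ and use the elementary inequality $e^{y}\le1+y+y^2\phi(\lambda b)/(\lambda b)^2$ for $y\le\lambda b$, where $\phi(z)=e^z-1-z$; this is the standard monotonicity of $(e^y-1-y)/y^2$. Since $\E X_i=0$, we get
\[
 \E e^{\lambda X_i}\le 1+\Var(X_i)\frac{\phi(\lambda b)}{b^2}\le\exp\Bigl\{\Var(X_i)\frac{\phi(\lambda b)}{b^2}\Bigr\}.
\]
Hence $M_k:=\exp\{\lambda S_k-\phi(\lambda b)b^{-2}\sum_{i\le k}\Var(X_i)\}$ is a nonnegative supermartingale with $M_0=1$. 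On the event $\{\max_kS_k\ge x\}$ we have $\max_kM_k\ge\exp\{\lambda x-\phi(\lambda b)V/b^2\}$. Ville's (or Doob's) maximal inequality for nonnegative supermartingales therefore gives
\[
 \Prob\Bigl(\max_kS_k\ge x\Bigr)\le\exp\Bigl\{-\lambda x+\frac{V}{b^2}\phi(\lambda b)\Bigr\}.
\]
This is Freedman's inequality in the special case of a deterministic variance.

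The last step is optimization, and it is the only place needing care. Choosing $\lambda=b^{-1}\log(1+bx/V)$ gives the Bennett exponent $(V/b^2)h(bx/V)$ with $h(u)=(1+u)\log(1+u)-u$. We then need the classical inequality $h(u)\ge u^2/(2(1+u/3))$. I would check it by comparing both sides and their derivatives at $u=0$ and showing that the second derivative of the left side dominates that of the right side; this is routine. Substituting $u=bx/V$ yields exactly $x^2/(2(V+bx/3))$. The degenerate cases $V=0$ or $b=0$ are trivial, since then every $X_i=0$ almost surely.
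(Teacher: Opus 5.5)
Your proof is correct and follows the paper's route. The paper simply cites Freedman's bounded-martingale inequality, applied to the partial sums and to their negatives. You reprove exactly that deterministic-variance special case via the exponential supermartingale, Ville's inequality, and the Bennett-to-Bernstein comparison $h(u)\ge u^2/(2(1+u/3))$; the second-derivative check you describe reduces to $(3+u)^3\ge 27(1+u)$ for $u\ge0$.
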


\section{A dual Euler equation for the functional large deviation rate}\label{sec:appendix-euler}

This appendix gives an Euler-type optimality equation for an extended bounded field dual functional associated with the functional large deviation rate.  The result is supplementary and is not used in the proofs of the  main theorems.  The entropy contraction implies that this extended dual functional never exceeds $I_\theta$.  Equality requires an additional attainment condition.  The purpose of the appendix is to make precise the nonlocal structure of any bounded optimal tilt.

For a bounded measurable function $\eta:[0,1]\to\R$, put
\begin{equation}\label{eq:appendix-Z-eta}
 \mathfrak Z_\eta(x)
 :=q^{m(x)}+\sum_{k=1}^{m(x)}\pi_k e^{\eta(kx)},
 \qquad 0<x\le1,
\end{equation}
and
\begin{equation}\label{eq:appendix-dual-functional}
 \Phi_f(\eta)
 :=\int_0^1\eta(t)\,\dd f(t)
 -\int_0^1\log \mathfrak Z_\eta(x)\,\dd x,
\end{equation}
whenever the first integral is defined.  In particular, if $f$ is absolutely continuous, then it is interpreted as $\int_0^1\eta(t)f'(t)\,\dd t$.

The next proposition is a conditional optimality statement for bounded dual fields.

\begin{proposition}\label{prop:appendix-euler}
Let $f$ be absolutely continuous and nondecreasing.  Suppose that a bounded measurable function $\eta^*$ maximizes the concave functional $\Phi_f$ over bounded measurable functions.  Then
\begin{equation}\label{eq:appendix-euler}
 f'(t)
 =e^{\eta^*(t)}\sum_{k=1}^\infty
 \frac{\pi_k}{k\,\mathfrak Z_{\eta^*}(t/k)}
 \quad\text{for a.e. }t\in(0,1).
\end{equation}
Conversely, if a bounded measurable function $\eta^*$ satisfies \eqref{eq:appendix-euler}, then it is a global maximizer of $\Phi_f$ over bounded measurable functions.
\end{proposition}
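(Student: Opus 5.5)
The plan is to treat $\Phi_f$ as a concave functional on $L^\infty[0,1]$ and derive \eqref{eq:appendix-euler} as its first-order (Gateaux) optimality condition. The first step is to rewrite the log-partition term so that its derivative in $\eta$ is local in $t$. Take a bounded direction $g$ and set $\varphi(s):=\Phi_f(\eta^*+sg)$. Since $\eta^*$ and $g$ are bounded, $\log\mathfrak Z_{\eta^*+sg}(x)$ is smooth in $s$, and its $s$-derivative is bounded uniformly in $x$ by $\norm{g}_\infty$ (it is an average of $g(kx)$ under a probability vector). Dominated convergence then lets us differentiate under the integral:
\[
\varphi'(0)=\int_0^1 g(t)f'(t)\dd t-\int_0^1\sum_{k=1}^{m(x)}\frac{\pi_k e^{\eta^*(kx)}g(kx)}{\mathfrak Z_{\eta^*}(x)}\dd x .
\]
In the $k$-th summand we substitute $t=kx$, so that $\dd x=\dd t/k$ and the constraint $k\le m(x)$ becomes $t\le1$. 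Tonelli applies because $\sum_k\pi_k/k<\infty$ and the integrand is bounded, giving
\[
\varphi'(0)=\int_0^1 g(t)\Bigl[f'(t)-e^{\eta^*(t)}\sum_{k\ge1}\frac{\pi_k}{k\,\mathfrak Z_{\eta^*}(t/k)}\Bigr]\dd t .
\]
Call the bracket $E(t)$. Because $\mathfrak Z_{\eta^*}\ge e^{-\norm{\eta^*}_\infty}$, the sum is bounded, so $E\in L^1$ (note $f'\in L^1$).

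For necessity, maximality of $\eta^*$ gives $\varphi'(0)=0$ for every bounded $g$. Choosing $g=\operatorname{sign}E$ yields $\int|E|=0$, that is, \eqref{eq:appendix-euler} holds almost everywhere. The one subtlety is that $\int\eta f'$ must be finite along the perturbation. This is automatic, since $\eta$ is bounded and $f'\in L^1$.

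For sufficiency, concavity is the key fact. The map $\eta\mapsto\int\log\mathfrak Z_\eta$ is convex, because $\log\sum_i w_ie^{u_i}$ is convex in $u$ and the composition $\eta\mapsto(\eta(kx))_k$ is linear. The first term of $\Phi_f$ is linear in $\eta$, so $\Phi_f$ is concave. For any bounded $\eta$, the function $s\mapsto\Phi_f(\eta^*+s(\eta-\eta^*))$ is then concave on $[0,1]$. If \eqref{eq:appendix-euler} holds, the computation above gives that its derivative at $0$ vanishes, so the function is maximized at $s=0$. In particular $\Phi_f(\eta)\le\Phi_f(\eta^*)$.

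The main obstacle is the justification of the change of variables and the interchange of sum and integral in the sum over $k$. The point to check carefully is that the constraint $k\le m(x)$ matches $kx\le1$ exactly. The invisible mass $q^{m(x)}$ has no $\eta$-dependence and so contributes nothing to the derivative. The measurability of $t\mapsto\mathfrak Z_{\eta^*}(t/k)$ also needs to be checked.
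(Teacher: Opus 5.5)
Your proposal is correct and follows the paper's proof essentially step for step. You compute the G\^ateaux derivative by dominated convergence, localize it through the change of variables $t=kx$ with Tonelli/Fubini, and use the lower bound $\mathfrak Z_{\eta^*}\ge e^{-\norm{\eta^*}_\infty}$ to make the series integrable. Your test function $g=\operatorname{sign}E$ is just a concrete form of the fundamental lemma the paper invokes, and concavity along segments gives the converse exactly as in the paper.
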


\begin{proof}
The map $\eta\mapsto\Phi_f(\eta)$ is concave, because the first term is linear and the second term is the negative of a convex log-sum-exp functional integrated over $x$.  Let $h$ be a bounded measurable perturbation.  For $\varepsilon$ in a bounded interval containing zero, dominated convergence gives
\begin{align}
 \frac{\dd}{\dd\varepsilon}
 \int_0^1\log \mathfrak Z_{\eta+\varepsilon h}(x)\,\dd x
 =&\int_0^1
 \frac{\sum_{k=1}^{m(x)}\pi_k e^{\eta(kx)+\varepsilon h(kx)}h(kx)}
 {\mathfrak Z_{\eta+\varepsilon h}(x)}\,\dd x.
 \label{eq:appendix-Gateaux-before-change}
\end{align}
At $\varepsilon=0$, change variables $t=kx$ in each term.  Since $x\le1/k$ is equivalent to $0\le t\le1$, Tonelli's theorem yields
\begin{align}
 D\Phi_f(\eta)[h]
 =&\int_0^1 h(t)f'(t)\,\dd t
 -\sum_{k=1}^\infty\int_0^{1/k}
 \frac{\pi_k e^{\eta(kx)}h(kx)}{\mathfrak Z_\eta(x)}\,\dd x
 \nonumber\\
 =&\int_0^1 h(t)\bigg[
 f'(t)-e^{\eta(t)}\sum_{k=1}^\infty
 \frac{\pi_k}{k\,\mathfrak Z_\eta(t/k)}
 \bigg]\dd t.
 \label{eq:appendix-first-variation}
\end{align}
The series in \eqref{eq:appendix-first-variation} is finite for almost every $t$: since $\eta$ is bounded and $\mathfrak Z_\eta(x)$ is bounded below by a positive constant depending only on $\norm{\eta}_\infty$ and $\theta$, it is dominated by a constant multiple of $\sum_{k\ge1}\pi_k/k$.

If $\eta^*$ is a maximizer, then $D\Phi_f(\eta^*)[h]=0$ for every bounded measurable $h$.  The fundamental lemma for $L^1$ functions gives \eqref{eq:appendix-euler}.  Conversely, suppose that \eqref{eq:appendix-euler} holds.  For any bounded measurable $\eta$, concavity of the one-dimensional function
$s\mapsto\Phi_f(\eta^*+s(\eta-\eta^*))$ gives
\[
 \Phi_f(\eta)-\Phi_f(\eta^*)
 \le D\Phi_f(\eta^*)[\eta-\eta^*]=0.
\]
Thus $\eta^*$ is a global maximizer over bounded measurable fields.
\end{proof}

\begin{remark}\label{rem:appendix-euler-entropy}
For every bounded measurable $\eta$ and every admissible reduced kernel $a$ with $\mathcal T a=f$, the pointwise Gibbs variational formula \cite[Proposition~1.4.2]{DupuisEllis} gives
\begin{align*}
 &\sum_{k=1}^{m(x)}a_k(x)\eta(kx)
 -\sum_{k=1}^{m(x)}a_k(x)\log\frac{a_k(x)}{\pi_k}
 -a_0(x)\log\frac{a_0(x)}{q^{m(x)}}\le \log\mathfrak Z_\eta(x).
\end{align*}
If $\mathcal T a=f$, then the derivative formula \eqref{eq:path-derivative}, followed by Tonelli's theorem and the change of variables $t=kx$, gives
\[
 \int_0^1\eta(t)\,\dd f(t)
 =\int_0^1\sum_{k=1}^{m(x)}a_k(x)\eta(kx)\,\dd x.
\]
After integration of the pointwise Gibbs inequality, this identity gives
$\Phi_f(\eta)\le \mathcal R_\theta(a)$.  Taking the infimum over $a$ yields $\Phi_f(\eta)\le I_\theta(f)$.  Thus any maximizer of the enlarged dual problem furnishes a supporting hyperplane for the entropy contraction.  When equality is attained, the associated optimal reduced kernel is necessarily the tilted kernel
\begin{equation}\label{eq:appendix-tilted-kernel}
 a_k^\eta(x)=\frac{\pi_k e^{\eta(kx)}}{\mathfrak Z_\eta(x)},\quad 1\le k\le m(x),
 \qquad
 a_0^\eta(x)=\frac{q^{m(x)}}{\mathfrak Z_\eta(x)}.
\end{equation}
The constraint $\mathcal T a^\eta=f$ is then exactly the Euler equation \eqref{eq:appendix-euler} after differentiating the path.
\end{remark}

\begin{remark}\label{rem:appendix-linear-paths}
The Euler equation shows why linear paths are delicate.  If $f_c(t)=ct$ and one tries a constant dual field $\eta(t)\equiv\lambda$, then \eqref{eq:appendix-euler} becomes
\begin{equation}\label{eq:appendix-constant-dual-linear}
 c=e^\lambda\sum_{k=1}^\infty
 \frac{\pi_k}{k\{q^{\lfloor k/t\rfloor}+(1-q^{\lfloor k/t\rfloor})e^\lambda\}},
 \qquad 0<t\le1.
\end{equation}
For $\lambda=0$, the right-hand side is
\[
 \sum_{k\ge1}\frac{\pi_k}{k}=c_\theta,
\]
so the typical linear path $c_\theta t$ is recovered.  For $\lambda\ne0$, the right-hand side in \eqref{eq:appendix-constant-dual-linear} is not constant in $t$.  As $t\downarrow0$, dominated convergence shows that it tends to $c_\theta$.  At $t=1$, its $k$th multiplicative factor relative to $\pi_k/k$ is
\[
 \frac{e^\lambda}{q^k+(1-q^k)e^\lambda},
\]
which is strictly larger than $1$ for $\lambda>0$ and strictly smaller than $1$ for $\lambda<0$.  Hence the value at $t=1$ is respectively larger or smaller than $c_\theta$.  Thus a non-typical endpoint constraint is not generally realized by a straight-line path with a constant dual tilt.  In particular, the endpoint rate is a contraction of the pathwise rate,
\[
 I_\theta^{\mathrm{end}}(c)=\inf\{I_\theta(f):f(1)=c\},
\]
and for the particular linear path one only has
\[
 I_\theta(f_c)\ge I_\theta^{\mathrm{end}}(c),
\]
with equality at least at the typical slope $c=c_\theta$.
\end{remark}


\end{document}